\title[Free Boundary On a Cone]{Free Boundary on a Cone}
\begin{document}
\author{Mark Allen \and
        H\'ector Chang Lara}

\maketitle

\begin{abstract}
We study two phase problems posed over a two dimensional cone generated by a smooth curve $\gamma$ on the unit sphere. We show that when $length(\gamma)<2\pi$ the free boundary avoids the vertex of the cone. When $length(\gamma)\geq2\pi$ we provide examples of minimizers such that the vertex belongs to the free boundary.
\keywords{Degenerate free boundary problems, two dimensional cone}
\end{abstract}


\section{Introduction}

The purpose of this paper is to initiate a study of free boundaries on manifolds with singularities. We study a free boundary problem that played a significant role in the historical development of the field of free boundary problems. The by now classical problem involves studying minimizers of the functional  
\begin{equation}   \label{e:j}
J(u,\W) = \int_\W |Du|^2 + \chi_{\{u>0\}}
\end{equation}
with a predetermined non negative boundary data. This situation appears in cavitational problems, flame propagation, optimal insulation among other models referenced for instance in the book \cite{Caffarellibook}. The Euler-Lagrange equation gives the following over determined problem for $u$,
\begin{alignat*}{2}
 \D u &= 0 &&\text{ in $\{u > 0\}\cap \W$},\\
 |Du^+| &= 1 &&\text{ in $\p\{u>0\}\cap \W$}.
\end{alignat*}
The regularity of $u$ and its free boundary $\p\{u>0\}$ was obtained by L. Caffarelli and H. Alt in \cite{Caffarelli81}. More complicated situations appear in the two phase problem where $E$ also penalizes the set where $u$ is negative, in which case the boundary data can have arbitrary sign and regularity estimates become more delicate, see \cite{Caffarelli87, Caffarelli89, Caffarelli88}. Arbitrary metrics with some regularity condition are considered by the series of papers by Sandro Salsa and Fausto Ferrari \cite{Salsa04,Salsa07,Salsa07-2,Salsa10}. See also \cite{DeSilva11} for an alternative and elegant approach.

In this paper we look at two phase problems with degenerate metrics. In terms of existence, the minimization problem can be solved in the functional space $H^1$ over manifolds with minimal assumptions of smoothness, for instance with corners. Our first attempt is to study the simplest case we could imagine, a two dimensional cone generated by a smooth simple closed curve $\gamma$ on the unit sphere. The main question of interest is to study the interaction of the free boundary with the vertex. 

The free boundary in the one phase problem given in \eqref{e:j} behaves similarly to minimal surfaces. For instance, it is well known that there are no nontrivial area minimizing cones in dimensions $n \leq 7$ while the Simons cone in dimension $n=8$ is area minimizing. Similarly, there are no minimizing cone solutions to \eqref{e:j} in dimensions $n=2,3$ (see \cite{MR2082392}) while in dimension $n=7$ a minimizing cone does exist (see \cite{MR2572253}) which is analogous to the Simons cone. In this paper we provide another connection between minimal surfaces and the free boundary arising from \eqref{e:j}. For distance minimizing geodesics on two dimensional cones (generated by a smooth simply connected curve $\gamma$ on the sphere) the following proposition is well-known
\begin{proposition}  \label{p:geo}
If $l = length(\gamma) < 2\pi$, no distance minimizing geodesics pass through the vertex. If $l = length(\gamma) \geq 2\pi$, then there are distance minimizing geodesics that pass through the vertex.
\end{proposition} 
The proof when $l < 2\pi$ can be found  
in Section 4-7 in the book \cite{DoCarmo76}. 

In this paper we prove the analogous result of Proposition \ref{p:geo} for minimizers of \eqref{e:j} on a cone. 
\begin{theorem}  \label{main}
Let $u$ be a minimizer of \eqref{e:j}. If $l < 2\pi$, then the vertex $0 \notin \partial \{u=0\}$. If $l \geq 2\pi$ the free boundary can pass through the vertex. 
\end{theorem}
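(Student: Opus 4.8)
The plan is to run a blow-up analysis at the vertex, classify the possible blow-up limits as one-homogeneous minimizers, and then extract the threshold $l=2\pi$ from a sharp energy comparison. First I would record that in coordinates $(r,\theta)$ with $\theta\in[0,l]$ (endpoints identified) the induced metric on the cone is the flat cone metric $dr^2+r^2\,d\theta^2$; away from the vertex the cone is locally isometric to the plane, so the Alt--Caffarelli theory (Lipschitz continuity and nondegeneracy of minimizers, smoothness of the free boundary off the vertex) applies verbatim, and the truncation $u\mapsto\max(u,0)$ shows minimizers may be taken nonnegative. The dilations $u_\rho(x)=\rho^{-1}u(\rho x)$ are intrinsic to the cone and satisfy $J(u_\rho,B_1)=\rho^{-2}J(u,B_\rho)$, so the Weiss density $W(\rho)=\rho^{-2}\int_{B_\rho}(|Du|^2+\chi_{\{u>0\}})-\rho^{-3}\int_{\partial B_\rho}u^2$ is monotone nondecreasing and constant exactly on one-homogeneous functions. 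Extracting a blow-up limit $u_{\rho_k}\to u_0=r\phi(\theta)$, I obtain a one-homogeneous global minimizer, and the assumption $0\in\partial\{u>0\}$ forces $u_0$ to be nontrivial with the vertex on its free boundary.

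Next I would classify such homogeneous minimizers. Since $u_0\ge0$ is harmonic on $\{u_0>0\}$, on each arc where $\phi>0$ one gets $\phi''+\phi=0$, hence $\phi=\cos(\theta-\theta_0)$ there, while the free boundary condition $|Du_0^+|=1$ reads $|\phi'|=1$ at the endpoints of the arc; together these force every positive arc to have angular width exactly $\pi$ with $\phi=\cos(\theta-\theta_0)$. A direct computation then gives $W(u_0)=\tfrac{k\pi}{2}$, where $k\ge1$ is the number of positive arcs, and since these are disjoint arcs of width $\pi$ inside a circle of circumference $l$ we must have $k\pi\le l$. In particular, if $l<\pi$ no positive arc fits and the vertex cannot lie on the free boundary; and if $\pi\le l<2\pi$ then $k=1$, so the only candidate blow-up is a single half-plane profile of width $\pi$ separated by a gap of width $l-\pi<\pi$.

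It remains to exclude this single-arc blow-up when $l<2\pi$, and here I would use a conformal device. Writing $a=l/2\pi$ and pulling back by the conformal map $z\mapsto z^{a}$, which sends the standard plane onto the cone and preserves the Dirichlet integral in two dimensions, minimizing $J$ on the cone becomes minimizing $\int|D\tilde u|^2+a^2|z|^{2(a-1)}\chi_{\{\tilde u>0\}}$ on the plane (circles centered at the origin correspond, so boundary data correspond). When $l<2\pi$ we have $a<1$ and the weight $|z|^{2(a-1)}$ blows up at the origin, so carrying positive phase all the way to the vertex is heavily penalized; shrinking the positive set away from the origin lowers the weighted penalty at only lower-order Dirichlet cost, producing an admissible competitor with strictly smaller energy and contradicting minimality of the blow-up. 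Hence for $l<2\pi$ no homogeneous minimizer has the vertex on its free boundary, so the vertex does not lie on the free boundary, i.e.\ $0\notin\partial\{u=0\}$.

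For $l\ge2\pi$ the same reduction has $a\ge1$, the weight no longer blows up, and the single half-plane profile $u_0=r\cos\theta$ on $(-\pi/2,\pi/2)$ pulls back to the positive part of $\mathrm{Re}(z^{a})$, namely $(\rho^{a}\cos(a\psi))^{+}$, which one checks is a critical point of the weighted functional and which I would verify to be a genuine minimizer (for $l=2\pi$ it is literally the Alt--Caffarelli half-plane solution on the flat plane), thereby exhibiting a minimizer whose free boundary passes through the vertex. I expect the main obstacle to be exactly the sharp energy comparison of the third paragraph: turning the heuristic ``the weight disfavors positive phase at the vertex'' into an explicit competitor that strictly beats the single-arc solution for every $l<2\pi$, while showing the very same construction is optimal for $l\ge2\pi$. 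This is the precise point where the cone angle $2\pi$ enters, and it is the free-boundary analogue of the ``cut the corner'' shortcut underlying the geodesic Proposition~\ref{p:geo}.
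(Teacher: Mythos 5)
Your first two paragraphs (blow-up via a Weiss-type monotonicity, classification of one-homogeneous minimizers into arcs of angular width exactly $\pi$, reduction to excluding the single half-plane profile when $l<2\pi$) match the paper's reduction: this is precisely Corollary \ref{coro:blowup} plus Lemma \ref{lemma:case1}, after which the paper also reduces to showing that $v=x_2^+$ is not a minimizer on the cone. But your third paragraph, which you yourself flag as the main obstacle, is where the proof actually lives, and the mechanism you propose there is not just incomplete --- it points in the wrong direction. After unrolling the cone to $\W=\{-\cot(l/2)|x_1|<x_2\}$, the paper observes that any competitor whose positivity set \emph{shrinks} relative to $\{x_2>0\}$ extends by zero to a competitor in the flat plane, where $x_2^+$ is the \emph{unique} minimizer subject to its own boundary values; hence every such competitor is strictly worse. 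Your conformal heuristic (``shrinking the positive set away from the origin lowers the weighted penalty at only lower-order Dirichlet cost'') is exactly a shrinking competitor, and moreover there is no lower-order gain: the functional is invariant under the blow-up scaling, so a truncation at scale $\e$ produces a weighted-area gain and a Dirichlet cost that are \emph{both} of order $\e^2$ (in your weighted picture, both of order $\e^{2a}$), a constant-versus-constant fight with the wrong sign.

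The winning competitor must instead \emph{add} positivity below the line, wrapping it around the vertex so that part of the added set falls into the missing wedge $\mathbb{R}^2\sm\W$ of angle $2\pi-l$, whose area costs nothing on the cone while the Dirichlet gain is computed as if in the full plane. Even then the gain and the deficit are of the same quadratic order, which is why the paper needs its iterative construction: the auxiliary functional $F(E)=|E|-\int u_E(x_1,0)\,dx_1\geq0$ with scaling $F(tE)=t^2F(E)$ (Lemma \ref{lemma:F}), the initial bound $F(A_c)\leq2$ on isosceles triangles (Lemma \ref{lemma:initial}), and the inductive scale--translate--truncate step $F(E_{t,a,b})\leq t^2F(E)+3(1-t)^2$ (Lemma \ref{lemma:inductive}), which drives $F(E_k)\to0$ while the captured area $|E\cap(\mathbb{R}^2\sm\W)|$ stays bounded below, yielding $F(E)-|E\cap(\mathbb{R}^2\sm\W)|<0$ and hence a strictly better competitor via Remark \ref{rmk:strategy}. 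None of this is reachable from your heuristic as stated. On the $l\geq2\pi$ side your sketch is closer, but ``I would verify it is a genuine minimizer'' also omits the actual argument: the paper proves minimality of the extended $x_2^+$ (and of the two-phase gluing for $l\geq4\pi$) by symmetrizing an arbitrary competitor with the lattice principle (Lemma \ref{lattice}) and cutting along the symmetry line to invoke uniqueness of the half-plane solution on the flat plane; only the case $l=2\pi$ is literally classical.
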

The proof that the free boundary avoids the vertex when $l < 2\pi$ is given in Section 3. In Section 4 we provide examples of the free boundary passing through the vertex when $l \geq 2\pi$. Even more, for $l\geq4\pi$ there are examples of one phase problems where two different positive phases meet at the vertex which is an unexpected singular behavior.
 
After understanding the previous particular case we plan to continue studying the regularity of the free boundary with degenerate metrics in future works. At the moment we do not know about the optimal regularity of the solution when $l > 2\pi$ and the vertex belongs to the free boundary of both the positivity and negativity phases. Notice that from the Fourier series representation, harmonic functions over a cone with $l > 2\pi$ might be only H\"older continuos at the vertex. However we expect that minimizers which evaluate zero at the vertex to also be Lipschitz.

Other interesting directions to explore are:
\begin{enumerate}
\item Homogenization problems with singular metrics. Consider for instance the one phase problem posed over a manifold with many small corners. This might be related with the homogenization of capillary drops over inhomogeneous surfaces studied in \cite{Caffarelli07}.
\item Free boundary problems over higher dimensional cones. In this case our approach seems limited by the fact that we strongly used that outside of the vertex the metric can be considered flat.
\end{enumerate}

It is worth noting that Theorem \ref{main} also bears resemblance to the result obtained by H. Shahgholian in \cite{MR2120191} where the free boundary in the obstacle problem can enter into the corner of a fixed boundary if and only if the aperture of the corner is greater than or equal to $\pi$. Many of the techniques and methods developed in studying the classical problem \eqref{e:j} aided in the study of the obstacle problem. The results and techniques of this paper may aid in the future study of obstacle problems over rough obstacles which has applications in mathematical finance \cite{MR2753636}. 

The paper is organized as follow. In Section 2 we discuss existence, regularity and stability of the minimizers. The proofs of many of these statements are simple adaptations from the arguments found in the classical literature and are left for the appendix at the end. Section 3 is our main contribution. There we prove that, in the case $l < 2\pi$, the free boundary of our minimizers always avoid the vertex. Our approach consists in reducing the problem to find a better competitor against 1-homogeneous minimizers. Finally in Section 4 we discuss the situation when $l \geq 2\pi$. We provide some examples where the vertex belongs to the free boundary and for even larger values of $l$ we also show that more than one positive phase can meet at the vertex.


\section{Preliminaries}\label{sec:Preliminaries}

We fix, without loss of generality, our two dimensional cone $\cC \ss \R^3$ to have its vertex at the origin. Such a cone $\cC$ embedded in $\R^3$ is a ruled surface that inherits a flat metric. By this we mean that for every open set $U \ss \cC\sm\{0\}$ there is always a local isometry that maps it to an open set of $\R^2 \sm \{0\}$ with the flat metric. This follows from a parametrization of $\cC$ given by polar coordinates, since $\cC \cap B_1$ is just a one dimensional smooth simple closed curve that can be parametrized by arc length. In order to also have an injective isometry we can lift the previous map to the universal covering of $\R^2 \sm \{0\}$ which we denote by $\cR^2$. In polar coordinates $\cR^2$ gets parametrized by a radius and an angle $(r,\theta)\in\R^+\times\R$.

Let $l$ be the length of the trace of the given cone in the unit sphere. From now on we just say that $\cC$ has length $l$. This length gives us a canonical representation of $\cC\sm\{0\}$ as $\cR^2/\{\theta \in l\Z\}$ with the flat metric in $\cR^2$. We denote by $\phi_l$ the (isometry) quotient map going from $\cR^2$ to $\cR^2/\{\theta \in l\Z\}$.

A way to visualize what we have described so far is by cutting the cone by one of its rays starting at the origin and laying the surface flat, keeping in mind the identification at the boundary. In the case that $l<2\pi$ it looks like $\R^2$ minus a cone and in the case that $l > 2\pi$ we will have some overlap. See Figure \ref{f:cone}. 

\begin{figure}[h]  
  \includegraphics[width = 8cm]{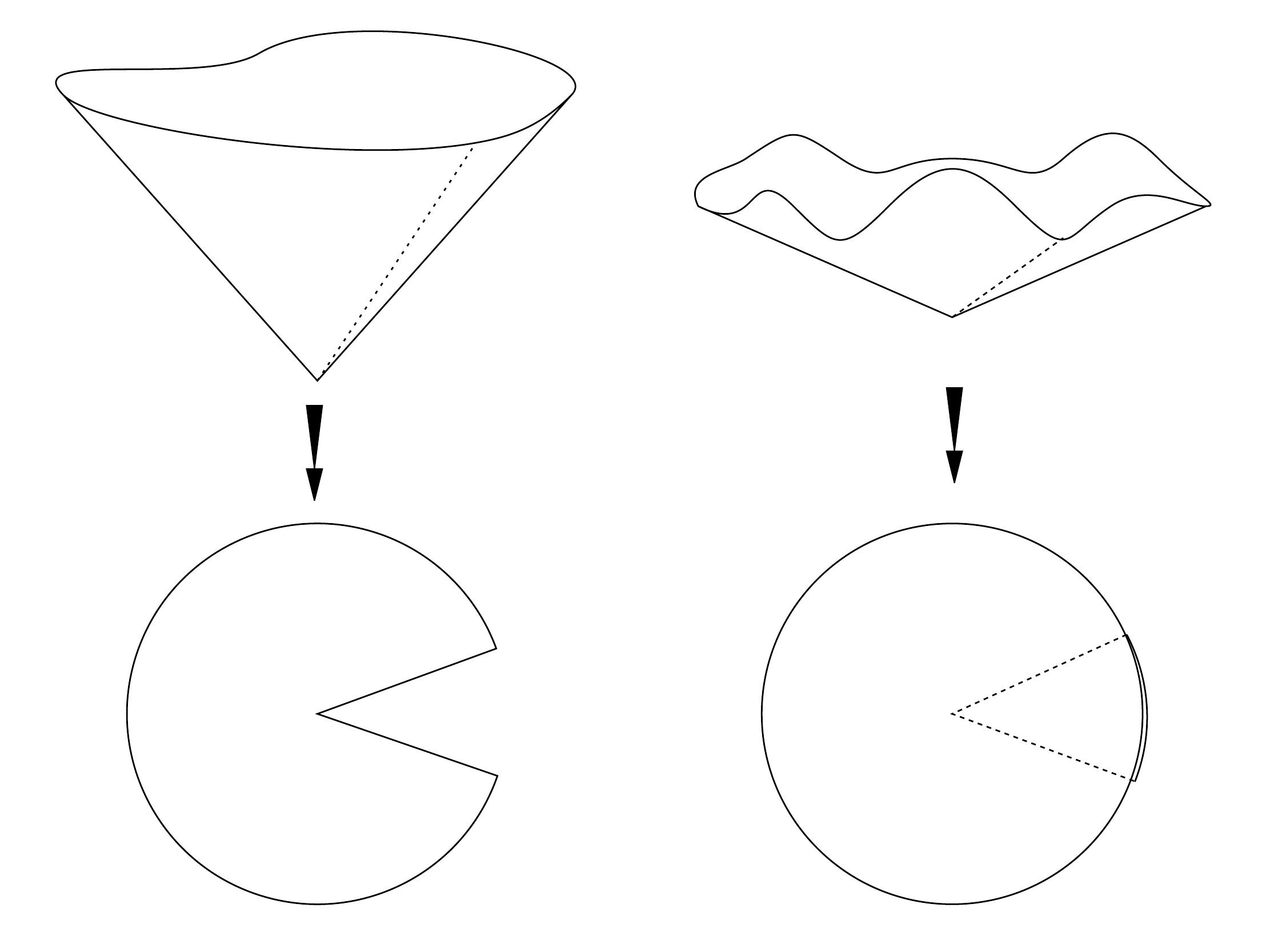}
  \caption{Cutting two cones and laying them flat}
  \label{f:cone}
\end{figure}

Notice that all we have said so far also holds for any two dimensional cone embedded in $\R^n$ with $n\geq 2$. After looking at the universal covering of such cone minus its vertex the domain gets fixed to a quotient of the form $\cR^2/\{\theta \in l\Z\}$.


\subsection{Harmonic functions on a Cone}

In this section we study some basic properties of harmonic functions on $\cC$. Given the previous discussion, we have that for $f:\W\ss\cC\to\R$ we can define any differential operator acting on $f$ in the distributional sense. A test function $\varphi(x)$ in this case is a smooth function in $\cC\sm\{0\}$ with all its derivatives uniformly bounded in $x$ and such that $\varphi(x)$ is continuous at the vertex. Notice that, because there is not a tangent plane at the vertex, we can not make sense of the gradient of a function at the vertex. However we can always ask if the function has a modulus of continuity even at the vertex. 

The following proposition gives some equivalent definitions for subharmonic functions. We omit the proof.

\begin{proposition}[Subharmonic functions]
For a function $h:\W \ss \cC \to \R$ the following are equivalent and in such cases we say that $h$ is subharmonic:
\begin{enumerate}
\item For every $K \ss \W$ compact, $h \in H^1(K)$ and it minimizes the Dirichlet energy $\int_{K} |Dh|^2$ over all the functions less or equal than $h$ in $K$ and with the same boundary data as $h$ in $\p K$. Here we denoted by $Df$ the tangential gradient and the integral is taken with respect to the area form in $\cC$.
\item $h \in L^1(\W)$ and it has the mean value property for subharmonic functions in $\W$.
\item Seeing as a function $h:\W' \ss \cR^2/\{\theta \in l\Z\} \to \R$ (where $\W\sm\{0\}$ gets mapped to $\W'$ by the isometry) $\D h \geq 0$ in $\W'$ in the sense of distributions and has the mean value property for subharmonic functions at the origin if $0 \in \W$.
\end{enumerate}
\end{proposition}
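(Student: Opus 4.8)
The plan is to treat the punctured domain $\W\sm\{0\}$ and the vertex separately, since away from the vertex the statement is entirely classical. On $\W\sm\{0\}$ the local isometries described above identify a neighborhood of any point with an open subset of flat $\R^2$, and all three conditions are local and isometry invariant there: condition (1) restricted to compacts $K\ss\W\sm\{0\}$ is the variational characterization of subharmonicity, condition (2) restricted to balls not containing the vertex is the sub-mean value property, and condition (3) is precisely the distributional inequality $\D h\geq 0$ on $\W'$. The equivalence of these three notions in the plane is standard in the theory of subharmonic functions and therefore transfers verbatim. So the entire content to be verified is that the three ways of encoding the behavior \emph{at the vertex}—the minimization over compacts $K$ that contain $0$ in (1), the sub-mean value property \emph{in $\W$} in (2), and the explicit sub-mean value property at the origin in (3)—all coincide.

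The structural fact that makes the vertex tractable is that a single point has zero capacity in two dimensions. Concretely, any $\varphi\in H^1$ can be approximated in $H^1$ by functions vanishing near $0$ using logarithmic cutoffs $\eta_\varepsilon$ with $\|D\eta_\varepsilon\|_{L^2}\to 0$. Consequently the Dirichlet energy $\int_K|Dh|^2$ does not see the vertex, membership $h\in H^1(K)$ is unaffected by removing $0$, and admissible variations in (1) may be taken to vanish near $0$. This shows that condition (1) on compacts containing $0$ is equivalent to condition (1) on compacts contained in $\W\sm\{0\}$ together with $h\in H^1(K)$; in particular the constraint in (1) carries no extra information at the vertex beyond integrability, and is automatically consistent with subharmonicity on the punctured domain.

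It remains to match this with the sub-mean value property at the vertex, which is the analytic heart of the argument. Writing the cone metric in polar coordinates as $dr^2+r^2\,d\theta^2$ with $\theta$ ranging over an interval of length $l$, I would study the spherical average $\bar h(r)=\tfrac{1}{l}\int_0^l h(r,\theta)\,d\theta$. Integrating $\D h\geq 0$ over $\theta$ and using $l$-periodicity to kill the angular term shows that $r\mapsto r\,\bar h'(r)$ is nondecreasing; since the integrability of $h$ and the finiteness of its value at the vertex exclude the logarithmically singular mode, one gets $r\,\bar h'(r)\to 0$ as $r\to 0$, hence $\bar h$ is nondecreasing and $h(0)=\lim_{r\to 0}\bar h(r)\leq\bar h(r)$ for all small $r$. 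This is exactly the sub-mean value inequality at the vertex, and running the computation in reverse recovers the averaged inequality from it. I expect this monotonicity-of-averages step to be the main obstacle, precisely because when $l>2\pi$ the lowest nontrivial homogeneous harmonic exponent $2\pi/l$ drops below $1$, so the argument must be arranged to rely only on $h\in L^1$ near $0$ rather than on any gradient bound there—consistent with the fact that $Dh$ has no meaning at the vertex. Closing the cycle $(1)\Rightarrow(2)\Rightarrow(3)\Rightarrow(1)$ then follows by combining the classical punctured-domain equivalences with the two vertex statements just established.
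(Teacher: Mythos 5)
The paper gives you nothing to be compared against here: it states this proposition and explicitly says ``We omit the proof,'' so your outline can only be judged on its own terms. Its architecture is the natural one and, in part, correct: away from the vertex everything is the classical planar theory transported by the local isometries, and your zero-capacity reduction of condition (1) is sound --- since a point has zero $H^1$-capacity in two dimensions, the variational inequality $\int Dh\cdot Dw\le 0$ for $0\le w\in H^1_0(K)$ only needs to be tested with $w$ vanishing near the vertex (after truncating to bounded competitors so that the cross term $\int |D\eta_\varepsilon|^2\,(h-v)^2$ is controlled), so (1) on compacts containing $0$ reduces to punctured subharmonicity plus $h\in H^1(K)$.

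The step you yourself call the analytic heart, however, contains a genuine error: you claim that ``the integrability of $h$ and the finiteness of its value at the vertex exclude the logarithmically singular mode,'' and that the argument should ``rely only on $h\in L^1$ near $0$.'' Neither hypothesis excludes that mode. In two dimensions $\log(1/r)$ is integrable near the origin, and the function $h(r,\theta)=\log(1/r)$, with $h(0)$ assigned any real value, is harmonic on the punctured cone, lies in $L^1(\mathcal{C}_1)$, and even satisfies the vertex inequality $h(0)\le \bar h(r)$ for all small $r$ (its averages blow up to $+\infty$); yet $r\,\bar h'(r)\equiv -1\not\to 0$ and $\bar h(r)=\log(1/r)$ is strictly \emph{decreasing}, so the conclusion of your monotonicity step fails under your stated hypotheses. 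What actually kills the log mode in the direction $(1)\Rightarrow(2),(3)$ is precisely the gradient information you propose to avoid: by Cauchy--Schwarz in $\theta$, $\int_{\mathcal{C}_\rho}|Dh|^2\ \ge\ l\int_0^\rho \bar h'(r)^2\,r\,dr$, which diverges unless the nondecreasing quantity $r\,\bar h'(r)$ tends to $0$; hence $H^1$ membership across the vertex, available in (1), forces $\bar h$ nondecreasing, and the paper's standing convention that functions on $\mathcal{C}$ are continuous at the vertex then identifies $h(0)$ with $\lim_{r\to 0}\bar h(r)$. In the converse directions the same example shows that the literal hypotheses of (3) do not imply (1), nor the sub-mean value inequality at nearby points over balls containing the vertex (for $h=\log(1/r)$ one has $\Delta h=-l\,\delta_0$, and the average of $h$ over a circle enclosing the vertex is strictly below the value at its center); there you must instead use local boundedness near the vertex (again the continuity convention) to get a Caccioppoli bound and the removable singularity theorem for subharmonic functions across a polar point. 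With the $L^1$ claim replaced by these two mechanisms --- the Dirichlet-energy exclusion in one direction, removability under local boundedness in the other --- your cycle $(1)\Rightarrow(2)\Rightarrow(3)\Rightarrow(1)$ closes.
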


The definition of superharmonic functions is analogous to the previous one by changing the corresponding inequalities. A harmonic function is one which is both sub and super harmonic simultaneously. This definition in particular allows one to perform integration by parts and recover Green's formula even in the case where the vertex belongs to the domain of integration. 

The following proposition follows from the Fourier series representation and gives us already some intuition about how differently harmonic functions behave according to the length of the cone. We also omit its proof.

\begin{proposition}\label{prop:harmonic_functions}
Let $\C$ be a cone with length $l$. Any harmonic function $h$ on $\C$ may be written as 
\begin{align}  \label{e:expan}
h(r,\theta) = \sum_{k=0}^{\infty}{r^{2 \pi k/l} \left(a_k \cos \frac{2 \pi k}{l} \theta 
                                                \ + \ b_k \sin \frac{2 \pi k}{l} \theta \right) }.
\end{align}
In particular, $h \in C^{2\pi/l}$.
\end{proposition}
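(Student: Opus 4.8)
The plan is to obtain the expansion by separating variables in polar coordinates and then using the behavior of $h$ at the vertex to discard the singular modes. First I would work on $\C\sm\{0\}$, which by the discussion in Section~\ref{sec:Preliminaries} is locally isometric to the flat plane, so that $h$ is a classical (smooth) solution of $\D h = h_{rr} + \tfrac1r h_r + \tfrac{1}{r^2}h_{\theta\theta} = 0$ there, with the coordinate $\theta$ understood modulo $l$. For each fixed $r>0$ the map $\theta\mapsto h(r,\theta)$ is a smooth $l$-periodic function, so it admits the Fourier expansion
\begin{equation*}
h(r,\theta) = \sum_{k=0}^\infty A_k(r)\cos\frac{2\pi k}{l}\theta + B_k(r)\sin\frac{2\pi k}{l}\theta,
\end{equation*}
where the coefficients $A_k(r), B_k(r)$ are the usual angular integrals of $h(r,\cdot)$. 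Differentiating under the integral sign and inserting $\D h=0$, each coefficient solves the Cauchy--Euler equation $f'' + \tfrac1r f' - \tfrac{\mu_k^2}{r^2}f = 0$ with $\mu_k = 2\pi k/l$.

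Hence for $k\geq 1$ one has $A_k(r) = a_k r^{\mu_k} + \tilde a_k r^{-\mu_k}$ (and likewise for $B_k$), while $A_0(r) = a_0 + c_0\log r$. The content of the proposition is that the singular parts, namely the coefficients $\tilde a_k,\tilde b_k$ and $c_0$, all vanish, and this is where I expect the one genuine point of the argument to lie, since it is exactly where the regularity of $h$ at the vertex enters. Because $h$ is harmonic it is both sub- and superharmonic, hence continuous and in particular bounded, say $|h|\leq M$, on a neighborhood of the vertex; the angular integrals then give $|A_k(r)| + |B_k(r)| \leq CM$ uniformly as $r\to 0$. Since $r^{-\mu_k}\to\infty$ and $\log r\to -\infty$, the only way for these coefficients to remain bounded is $\tilde a_k = \tilde b_k = 0$ for all $k\geq 1$ and $c_0 = 0$. (Equivalently, one may argue that the singular modes fail to be harmonic across the vertex in the distributional sense required by the definition of harmonicity, though on the cone point the boundedness argument is cleaner.) This leaves precisely the expansion \eqref{e:expan}, with $a_0 = h(0)$ forced by the mean value equality at the vertex.

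Finally, the Hölder bound $h\in C^{2\pi/l}$ is a consequence of the geometric decay of the surviving series. Fixing any $r_0$ inside the domain, convergence of the Fourier series at radius $r_0$ bounds $(|a_k|+|b_k|)r_0^{\mu_k}$, so for $r\leq r_0/2$ one gets
\begin{equation*}
|h(r,\theta) - h(0)| \leq \sum_{k\geq 1} (|a_k|+|b_k|)\, r^{\mu_k} \leq C\sum_{k\geq 1}\Big(\frac{r}{r_0}\Big)^{\mu_k} \leq C'\, r^{2\pi/l},
\end{equation*}
where I used $\mu_k = k\mu_1$ to sum the geometric tail, whose leading term is $r^{\mu_1} = r^{2\pi/l}$. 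Together with the smoothness of $h$ away from the vertex this yields the stated modulus of continuity. The main obstacle, then, is not the computation but isolating the correct hypothesis at the vertex, boundedness or the mean value property, that rules out the singular homogeneous solutions; everything else is the standard separation-of-variables bookkeeping.
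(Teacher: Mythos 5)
Your argument is correct and is essentially the proof the paper intends: the paper explicitly omits the proof, saying only that the proposition ``follows from the Fourier series representation,'' and your separation of variables, exclusion of the singular modes at the vertex, and geometric summation of the tail yielding the $r^{2\pi/l}$ modulus are exactly those omitted details. The one step you assert rather than prove --- that harmonicity forces boundedness (continuity) near the vertex --- is recoverable from the paper's own definitions: either note that the mean value equality over balls of a \emph{fixed} radius centered at points near the vertex gives $|h(x)| \leq C\|h\|_{L^1}$ there, or bypass boundedness entirely via the Dirichlet-energy characterization of (sub)harmonicity, since $r^{-2\pi k/l}$ and $\log r$ fail to belong to $H^1$ near the vertex and are thus excluded outright.
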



\subsection{Minimization problem}

Here we give the explicit minimization problem we want to study and show existence in $H^1$.

Given a domain $\W \ss \cC$ and $\l_+\neq\l_-$ non negative numbers, let $J:H^1(\W)\to\R$ given by,
\begin{align*}
 J(u) = J(u,\W,\l_+,\l_-) = \int_\W |Du|^2 + \l_+\chi_{\{u>0\}} + \l_-\chi_{\{u<0\}}
\end{align*}

The same proof given to show existence of minimizers of $J$ with a given boundary data also applies to our case. Here is the proposition and its proof can be adapted from the one in \cite{ACF84}.

\begin{proposition}
Given $g \in H^1(\W)$ such that $J(g) < \8$ there exists a minimizer $u \in H^1(\W)$ of $J$ such that $u - g \in H_0^1(\W)$.
\end{proposition}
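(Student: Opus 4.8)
The plan is to apply the direct method of the calculus of variations. Write $\mathcal{A} = \{v \in H^1(\W) : v - g \in H_0^1(\W)\}$ for the admissible class and set $m = \inf_{v \in \mathcal{A}} J(v)$. Since every term of $J$ is nonnegative we have $m \ge 0$, and since $g \in \mathcal{A}$ with $J(g) < \8$ by hypothesis we have $m \le J(g) < \8$; thus $m$ is finite and we may fix a minimizing sequence $u_n \in \mathcal{A}$ with $J(u_n) \to m$.

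First I would establish compactness. The energy bound $\int_\W |Du_n|^2 \le J(u_n) \le m + 1$ controls the gradients uniformly, and since $u_n - g \in H_0^1(\W)$ the Poincaré inequality upgrades this to a uniform bound on $\|u_n\|_{H^1(\W)}$. Here one uses that the $H^1$ theory on $\cC$ is the usual one: away from the vertex $\cC$ is flat, and the single vertex carries zero $H^1$-capacity in two dimensions, so it is removable and the standard Poincaré and Rellich--Kondrachov statements transfer verbatim (this is also where boundedness of $\W$ is used). By reflexivity of $H^1$ we extract $u_n \rightharpoonup u$ weakly in $H^1(\W)$; because $H_0^1(\W)$ is a closed subspace it is weakly closed, so $u - g \in H_0^1(\W)$ and $u \in \mathcal{A}$. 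Passing to a further subsequence, Rellich compactness gives $u_n \to u$ strongly in $L^2(\W)$ and, refining once more, $u_n \to u$ pointwise almost everywhere.

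It remains to show $J(u) \le \liminf_n J(u_n) = m$, which forces $J(u) = m$. The Dirichlet term $\int_\W |Du|^2$ is weakly lower semicontinuous by convexity of $\xi \mapsto |\xi|^2$. For the two measure terms---the genuinely nonstandard part of $J$---I would exploit that the real functions $t \mapsto \chi_{\{t>0\}}$ and $t \mapsto \chi_{\{t<0\}}$ are each lower semicontinuous. Combined with $u_n \to u$ a.e.\ this yields the pointwise inequalities $\liminf_n \chi_{\{u_n > 0\}} \ge \chi_{\{u>0\}}$ and $\liminf_n \chi_{\{u_n < 0\}} \ge \chi_{\{u<0\}}$ almost everywhere, and then Fatou's lemma gives $\int_\W \chi_{\{u>0\}} \le \liminf_n \int_\W \chi_{\{u_n>0\}}$ and likewise for the negative phase. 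Adding the three lower-semicontinuity inequalities, weighted by $\l_+, \l_- \ge 0$, delivers $J(u) \le \liminf_n J(u_n) = m$, and since $u \in \mathcal{A}$ we conclude that $u$ is the desired minimizer.

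The main obstacle is the lower semicontinuity of the volume terms $\l_+ \chi_{\{u>0\}} + \l_- \chi_{\{u<0\}}$, which are neither continuous nor convex in $u$; the argument rests crucially on passing to an a.e.-convergent subsequence and on the elementary fact that indicator functions of open half-lines are lower semicontinuous, so that Fatou applies. The only cone-specific point to verify is that the functional-analytic toolkit (Poincaré, weak compactness, Rellich) is unaffected by the singular vertex, which follows because a point has zero capacity in dimension two and $\cC \sm \{0\}$ is flat.
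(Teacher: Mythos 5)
Your proof is correct and follows essentially the same route as the paper: the paper omits the argument, saying only that it can be adapted from \cite{ACF84}, and that reference's proof is precisely your direct method (minimizing sequence, weak $H^1$ compactness via Poincar\'e, weak lower semicontinuity of the Dirichlet term, and a.e.\ convergence plus Fatou for the indicator terms). Your observation that the vertex has zero $H^1$-capacity, so the Poincar\'e/Rellich toolkit transfers from the flat case, is exactly the adaptation the paper has in mind.
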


Given $u$ a minimizer of $J$, over the domain $\cC_1 \sim B_1 \cap (\cR^2/\{\theta \in l\Z\})$, the pull back $\tilde f = f\circ\phi_l^{-1}$ is also a minimizer of $J$ over any compact set $\tilde K = \phi_l^{-1}(K)$. Most of the observations that can be made about the minimization problem posed in a domain in $\R^2$ can also be made about domains of the cone. Next we recall some of them.

First of all, since the functional $J$ is not convex, minimizers are not necessarily unique.

There is the possibility, when the boundary data is large enough, that minimizers stay positive in the whole domain and therefore the Euler Lagrange equations say that the solution has to be harmonic. Notice that in such case the regularity of the solution degenerates as $l$ becomes larger (see Proposition \ref{prop:harmonic_functions}). More interesting cases arise when there is a phase transition. This occurs, for example, if the boundary data changes sign or if it is sufficiently small.

The Euler Lagrange equation associated with the minimization problem looks exactly the same at every point of the domain which is different from the vertex. We have to introduce some notation before giving the set of equations. Let $u^\pm$ the positive and negative parts of $u = u^+ - u^-$ and $\W^\pm = \W\cap\{u^\pm>0\}$. Then
\begin{alignat*}{2}
 \D u &= 0 &&\text{ in $\{u \neq 0\}\cap (\W \sm \{0\})$},\\
 |Du^+| &= \l_+ &&\text{ in $(\p\W^+ \sm \p\W^-) \cap (\W \sm \{0\})$},\\
 |Du^-| &= \l_- &&\text{ in $(\p\W^- \sm \p\W^+) \cap (\W \sm \{0\})$},\\
 |Du^+|^2 - |Du^-|^2 &= \l_+^2 - \l_-^2 &&\text{ in $(\p\W^+ \cap \p\W^-) \cap (\W \sm \{0\})$}.\\
\end{alignat*}
What happens at the origin is actually the main concern of this work. Something that we can say is that if $u(0) \neq 0$ then $u$ is also harmonic at the origin. The next interesting case is when $0 \in (\p\W^+ \cup \p\W^-)$.


\subsection{Further properties of minimizers}

In this section we comment on some of the fundamental properties of minimizers of $J$. Their proof are simple adaptations of the classical proofs given in \cite{MR1759450,ACF84} and we leave them for the appendix of this paper. Specifically we will discuss:
\begin{enumerate}
\item Initial regularity. For any cone we show that minimizers are at least H\"older continuos depending on $l$ and the $H^1$ norm of the minimizer.
\item Stability of minimizers by uniform convergence.
\item Optimal regularity when $l\leq2\pi$.
\item Compactness and 1-homogeneity of sequences of blow-ups when $l\leq2\pi$.
\end{enumerate}


\subsubsection{Initial regularity}

Initially we can use the results from \cite{ACF84} to say that the minimizer $u$ is $C^{0,1}$ in every compact $\tilde K$ of the form $\tilde K = \phi_l^{-1}(K)$ and therefore also locally in $\W\sm\{0\}$. In particular, the Lipschitz estimates in \cite{ACF84} are scale invariant and therefore in our situation it gives us a Lipschitz estimate that degenerates towards the vertex.

\begin{proposition}\label{prop:Lipschitz_degenerate}
Given $u$ be a minimizer of $J = J(\cC_1,\l_+,\l_-)$ with $\|Du\|_{L^2(\cC_1)} \leq 1$ then for $r\in(0,1/2)$,
\begin{align*}
\|Du\|_{L^\8(\cC_{1/2}\sm\cC_r)} &\leq Cr^{-1},
\end{align*}
for some universal $C>0$.
\end{proposition}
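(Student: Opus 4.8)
The plan is to reduce the estimate to the classical interior Lipschitz bound of \cite{ACF84} on genuine Euclidean disks, exploiting that away from the vertex the metric of $\cC$ is flat (Section \ref{sec:Preliminaries}), and then to track how the scaling degenerates as the base point approaches the vertex.

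First I would record the scale invariant form of the Lipschitz estimate of \cite{ACF84}. If $v$ minimizes $J(\cdot, B_\rho(x_0), \l_+, \l_-)$ on a Euclidean disk $B_\rho(x_0) \ss \R^2$, then the rescaling $v_\rho(y) = \rho^{-1} v(x_0 + \rho y)$ is again a minimizer with the same parameters $\l_\pm$ (the functional picks up only an overall factor $\rho^{-2}$), so the normalized estimate on $B_1$ transfers to
\begin{align*}
\|Dv\|_{L^\8(B_{\rho/2}(x_0))} \leq C\left( \rho^{-1}\|Dv\|_{L^2(B_\rho(x_0))} + \sqrt{\l_+ + \l_-} \right).
\end{align*}
The important point, which I would justify separately, is that the right hand side involves only the $L^2$ norm of the gradient (together with the flux constant $\sqrt{\l_+ + \l_-}$) and not the size of $v$ itself: away from the free boundary $v$ is harmonic and the interior gradient estimate for harmonic functions needs only $\|Dv\|_{L^2}$, while near the free boundary the flux conditions $|Dv^\pm| = \l_\pm$ already bound $|Dv|$ by $\sqrt{\l_+ + \l_-}$.

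Next I would use the flat structure of $\cC \sm \{0\}$. For $x \in \cC_{1/2} \sm \cC_r$ write $d = |x| \in [r, 1/2]$. By the local isometry with $\cR^2$ described in Section \ref{sec:Preliminaries}, there is $c_l \in (0,1]$ depending only on $l$ such that the geodesic ball $B_\rho(x) \ss \cC$ with $\rho = \tfrac12 c_l d$ misses the vertex, does not wrap around the cone, and is therefore isometric to a Euclidean disk of radius $\rho$ contained in $\cC_1$. The constant $c_l$ is dictated by the injectivity radius at $x$: for $l \geq \pi$ one may take $c_l = 1$ (the only obstruction is the vertex), while for a thin cone $l < \pi$ one must take $c_l$ comparable to $\sin(l/2)$ to prevent the disk from closing up. On this disk $u$ pulls back to a genuine Euclidean minimizer of $J$ with the same $\l_\pm$, and since the pullback is isometric the displayed estimate applies with $x_0 = x$ and with $\|Du\|_{L^2(B_\rho(x))}$ equal to the corresponding Euclidean energy.

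Finally, combining the two steps and using the normalization $\|Du\|_{L^2(\cC_1)} \leq 1$ together with $B_\rho(x) \ss \cC_1$ and $d \geq r$,
\begin{align*}
|Du(x)| \leq C\left( \frac{2}{c_l d}\|Du\|_{L^2(\cC_1)} + \sqrt{\l_+ + \l_-}\right) \leq \frac{C}{c_l r} + C\sqrt{\l_+ + \l_-} \leq \frac{C}{r},
\end{align*}
where in the last inequality I absorbed the bounded term $\sqrt{\l_+ + \l_-}$ using $r \leq 1/2$, and $C$ now depends only on $l, \l_+, \l_-$. Taking the supremum over $x \in \cC_{1/2} \sm \cC_r$ gives the claim. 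The main obstacle is the first step: making precise the scale invariant estimate with the $L^2$ gradient on the right hand side (rather than $\|v\|_{L^\8}$, which cannot be controlled by the global energy in two dimensions), and pinning down the dependence of the flat radius $c_l d$ on the injectivity radius of the cone so that the ball embeds isometrically.
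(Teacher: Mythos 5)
Your proposal is correct and is essentially the paper's own argument: the paper disposes of this proposition with exactly your reduction, namely that away from the vertex the cone is locally isometric to flat $\R^2$ via $\phi_l^{-1}$, that the Lipschitz estimates of \cite{ACF84} are scale invariant, and that applying them on flat disks of radius comparable to $|x|$ (your injectivity-radius bookkeeping with $c_l$ makes this precise) produces the bound degenerating like $r^{-1}$ toward the vertex. The one inaccurate point is your heuristic that the free boundary conditions $|Dv^\pm| = \l_\pm$ bound $|Dv|$ by $\sqrt{\l_+ + \l_-}$: on the two-phase part of the free boundary only the jump condition $|Du^+|^2 - |Du^-|^2 = \l_+^2 - \l_-^2$ holds, and the gradient bound there requires the Alt--Caffarelli--Friedman monotonicity formula with a constant depending also on the local Dirichlet energy and on $\l_+^2 - \l_-^2 \neq 0$ (compare Lemma \ref{lemma:bound_gradient} in the appendix); since you ultimately invoke the estimate from \cite{ACF84} rather than this heuristic, the proof stands, with the energy dependence absorbed by the normalization $\|Du\|_{L^2(\cC_1)} \leq 1$.
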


The next step is to check that $u$ also remains continuous up to the vertex. In this sense we can show the following Theorem.

\begin{theorem}\label{thm:initial_reg}
Let $u$ be a minimizer of $J = J(\cC_1,\l_+,\l_-)$ with $\|Du\|_{L^2(\cC_1)} \leq 1$ then for any $\a \in (0,\min(1,2\pi/l))$ we have that $u \in C^{\a/4}(\cC_{1/10})$ with,
\begin{align*}
|u(x) - u(y)| \leq C|x-y|^{\a/4} \text{ for every $x,y\in\cC_{1/10}$},
\end{align*}
and some universal $C > 0$.
\end{theorem}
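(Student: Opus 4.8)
The plan is to establish a H\"older modulus of continuity at the vertex and to patch it with the interior (degenerate Lipschitz) estimate of Proposition \ref{prop:Lipschitz_degenerate}. First I would dispose of the easy cases: away from the vertex $u$ is locally Lipschitz by Proposition \ref{prop:Lipschitz_degenerate}, and if $u(0)\neq0$ then $u$ is harmonic in a neighborhood of the vertex, so it is $C^{2\pi/l}$ there by Proposition \ref{prop:harmonic_functions}. Thus the only real issue is to control the oscillation of $u$ over $\cC_r$ as $r\to0$ in the case $u(0)=0$, i.e. to show $\mathrm{osc}_{\cC_r}u\leq Cr^{\a/4}$.

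The backbone of the argument is an energy decay estimate at the vertex obtained by harmonic replacement. Let $E(r)=\int_{\cC_r}|Du|^2$ and let $h$ be the harmonic function on $\cC_r$ (harmonic through the vertex in the sense of Section \ref{sec:Preliminaries}) agreeing with $u$ on $\p\cC_r$. Since $h$ is an admissible competitor and the measure terms in $J$ differ by at most $\max(\l_+,\l_-)|\cC_r|\leq Cr^2$, minimality of $u$ gives $\int_{\cC_r}|D(u-h)|^2\leq Cr^2$. On the other hand, the Fourier representation \eqref{e:expan} shows that every non-constant mode of $h$ contributes to the Dirichlet energy a term of order at least $\rho^{4\pi/l}$, so that $\int_{\cC_\rho}|Dh|^2\leq(\rho/r)^{4\pi/l}\int_{\cC_r}|Dh|^2\leq(\rho/r)^{4\pi/l}E(r)$ for $\rho\leq r$. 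Combining the two bounds through $|Du|^2\leq2|Dh|^2+2|D(u-h)|^2$ yields
\[
E(\rho)\leq C\left[\left(\frac{\rho}{r}\right)^{4\pi/l}E(r)+r^2\right].
\]
A standard iteration lemma then produces $E(r)\leq Mr^{\sigma}$ for every $\sigma<\min(2,4\pi/l)=2\min(1,2\pi/l)$; in particular $E(r)\leq Mr^{\a/2}$ since $\a<\min(1,2\pi/l)$.

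Next I would convert this energy decay into a pointwise bound. Poincar\'e's inequality on $\cC_r$ applied to $u-u_r$, where $u_r$ is the average of $u$ on $\cC_r$, gives $|\cC_r|^{-1}\int_{\cC_r}|u-u_r|^2\leq Cr^2\,|\cC_r|^{-1}E(r)\leq CMr^{\a/2}$; together with $u_r\to u(0)=0$ this shows that the $L^2$-mean of $u$ on $\cC_r$ decays like $r^{\a/4}$. To upgrade this $L^2$ decay to an $L^\infty$ oscillation bound I would rescale each dyadic annulus to unit size: setting $v(x)=u(rx)$ on a fixed flat annulus, the function $v$ is a minimizer of $J$ with the bounded parameters $r^2\l_\pm$ and controlled $L^2$ norm, so the interior estimates of \cite{ACF84} apply there and transfer the $L^2$ smallness into a sup bound on a slightly smaller annulus. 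Summing the resulting geometric decay over dyadic scales gives $\sup_{\cC_r}|u|\leq Cr^{\a/4}$, and patching this with the interior Lipschitz estimate yields the stated $C^{\a/4}(\cC_{1/10})$ bound for all $x,y$.

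The main obstacle is precisely this last conversion from energy to pointwise control near the vertex. The interior Lipschitz estimate of Proposition \ref{prop:Lipschitz_degenerate} degenerates like $r^{-1}$ at the vertex and is by itself too weak to feed a Campanato- or Morrey-type characterization at centers approaching the vertex; the rescaling to unit annuli is what prevents the Lipschitz constant from blowing up, and it is the passage from the energy scale to the pointwise scale that halves the exponent and explains why one only obtains $\a/4$ rather than the near-optimal $\min(1,2\pi/l)$. I would also take care to verify that the iteration lemma is applied correctly in the regime $l\geq2\pi$, where $4\pi/l\leq2$ and the decay rate is governed by $\min(2,4\pi/l)$, and that the harmonic replacement is genuinely admissible and harmonic through the vertex in the distributional sense fixed in Section \ref{sec:Preliminaries}.
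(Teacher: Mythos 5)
Your first half coincides with the paper's own argument: the paper proves precisely your energy decay $\int_{\cC_r}|Du|^2\leq Cr^{2\a}$ (their estimate \eqref{eq:Morrey1}) by harmonic replacement, the difference of the measure terms bounded by $CR^2$, the Fourier-based monotonicity of Lemma \ref{lemma:dirichlet_monotonicity} with exponent $4\pi/l$, and the iteration lemma (Lemma 3.4 of \cite{Fanghua11}). Where you genuinely diverge is the conversion of energy decay into pointwise control at the vertex: the paper proves the radial-integral estimate $\int_{\cC_R}\frac{|u(y)-u(0)|}{|y|}\,dy\leq CR^{1+\a}$ by integrating $|Du|$ along rays, and then compares $u(x)$ with $u(0)$ by averaging over a small ball $B_{\e r}(x)$ using the degenerate Lipschitz bound of Proposition \ref{prop:Lipschitz_degenerate}, obtaining $|u(x)-u(0)|\leq C|x|^{\a/3}$; you instead use Poincar\'e/Campanato telescoping of the averages plus a sup bound on rescaled dyadic annuli (which is cleanest via subharmonicity of $u^\pm$, Corollary \ref{coro:subharmonic}, rather than generic interior estimates). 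Both conversions are viable, and your final patching of the vertex decay with the $r^{-1}$-Lipschitz estimate is exactly the paper's concluding two-point case analysis.

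There are, however, two concrete slips. First, the exponent bookkeeping: your iteration actually yields $E(r)\leq Mr^{2\a'}$ for every $\a'<\min(1,2\pi/l)$, since $2\a'<\min(2,4\pi/l)$, but you then retain only the much weaker $E(r)\leq Mr^{\a/2}$, which through Poincar\'e gives vertex decay $\sup_{\cC_r}|u|\leq Cr^{\a/4}$. This is too weak for the patching step: if the vertex decay has exponent $\mu$ and the Lipschitz constant degenerates like $r^{-1}$, the optimal split $\e=r^{s/(1-s)}$ produces the H\"older exponent $s=\mu/(1+\mu)$, which for $\mu=\a/4$ gives only $\a/(4+\a)<\a/4$; and one cannot always relabel $\a$ to repair this, since the required $\a''=4\a/(4-\a)$ exceeds $\min(1,2\pi/l)$ when $\a$ is near the top of the admissible range. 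The fix is immediate: keep the full decay $E(r)\leq Mr^{2\a'}$ with $\a<\a'<\min(1,2\pi/l)$, giving vertex decay $r^{\a'}$ and final exponent $\a'/(1+\a')\geq\a'/2>\a/4$. (For comparison, the paper's exponents are consistent: its vertex decay $|x|^{\a/3}$ satisfies $\a/3\geq s/(1-s)$ with $s=\a/4$ precisely because $\a\leq1$.) Second, your reduction ``if $u(0)\neq0$ then $u$ is harmonic near the vertex, hence $C^{2\pi/l}$ there'' yields a constant depending on $\dist(0,\{u=0\})$, i.e.\ on $u$ itself, whereas the theorem asserts a universal $C$. This case split is unnecessary: the harmonic-replacement decay and the telescoping of averages control the oscillation of $u$ on $\cC_r$ without reference to the value $u(0)$ (the paper's proof makes no such distinction), so you should run the oscillation argument uniformly and drop the branch.
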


\begin{corollary}\label{coro:subharmonic}
Let $u$ be a minimizer of $J = J(\cC_1,\l_+,\l_-)$, then $u^\pm$ are continuous subharmonic functions satisfying $\D u^\pm = 0$ in $\cC_1^\pm$.
\end{corollary}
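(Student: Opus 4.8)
The plan is to dispatch the three claims in turn, getting continuity for free, reading off interior harmonicity from the Euler--Lagrange system, and proving subharmonicity by a harmonic-replacement comparison. Continuity is immediate: Theorem~\ref{thm:initial_reg} gives $u\in C^{\a/4}(\cC_{1/10})$ and Proposition~\ref{prop:Lipschitz_degenerate} gives that $u$ is locally Lipschitz in $\cC_1\sm\{0\}$, so $u$ is continuous on all of $\cC_1$; since $t\mapsto t^+$ and $t\mapsto t^-$ are continuous, $u^+$ and $u^-$ are continuous.

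For the interior harmonicity I would restrict to $\cC_1^+=\cC_1\cap\{u>0\}$, on which $u^+=u$ and $u^-\equiv 0$. The Euler--Lagrange system gives $\D u=0$ on $\{u\neq 0\}\cap(\cC_1\sm\{0\})$, and if the vertex lies in $\cC_1^+$, that is $u(0)>0$, then by the observation recorded at the end of the minimization subsection $u$ is harmonic at the origin as well. Hence $\D u^+=0$ throughout $\cC_1^+$, and symmetrically $\D u^-=0$ in $\cC_1^-$.

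The crux is subharmonicity. I would verify the mean value inequality of the subharmonic-functions proposition on every admissible ball $B$, taking either $B\ss\cC_1\sm\{0\}$ (where the cone is flat via the isometry $\phi_l$) or $B=\cC_r$ centered at the vertex. Let $v$ be the harmonic replacement of $u^+$ in $B$; by the maximum principle $v\geq 0$ because $u^+\geq 0$ on $\p B$. On the open set $U=\{u^+>0\}\cap B$ both $u^+$ and $v$ are harmonic by the second step, while on $\p U$ one has $u^+=v$ along $\p B$ and $u^+=0\leq v$ along $\p\{u>0\}$. The maximum principle then forces $u^+\leq v$ in $U$, and trivially $u^+=0\leq v$ off $U$; evaluating at the center of $B$ and using the mean value property of $v$ shows that $u^+$ at the center is no larger than the mean value of $u^+$ over $\p B$. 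This is exactly the subharmonic mean value inequality, and the argument for $u^-$ is identical.

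The hard part will be running this comparison when $B=\cC_r$ is centered at the vertex, since there the maximum principle and the mean value property of $v$ must be applied up to and across the singular point. This is precisely what the definition of harmonic functions on $\cC$ was arranged to permit: Green's formula and the mean value property at the origin hold by construction (this is characterization (3) of the subharmonic-functions proposition), and the only way the vertex can belong to the positive phase is $u(0)>0$, which already forces $u$ to be harmonic there. Away from the vertex everything reduces, through $\phi_l$, to the classical flat computation, so no new difficulty arises off the singularity.
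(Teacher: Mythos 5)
Your proposal is correct, but it takes a different route from the one the paper (implicitly) intends. The paper never writes out a proof of this corollary: it is presented as following from Theorem \ref{thm:initial_reg} together with a ``simple adaptation of the classical proofs'' in the Alt--Caffarelli--Friedman literature, and the classical argument there is \emph{variational}, not potential-theoretic. Namely, for $\varphi \geq 0$ smooth with compact support one compares $u$ with the competitor $(u^+ - \e\varphi)^+ - u^-$; since this shrinks the positivity set, the measure terms in $J$ do not increase, and expanding the Dirichlet energy and letting $\e \to 0$ yields $\int Du^+ \cdot D\varphi \leq 0$, i.e.\ $\D u^+ \geq 0$ in the sense of distributions (characterization (3) of the subharmonicity proposition), with no continuity needed as input. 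Your argument instead takes the regularity as given (Theorem \ref{thm:initial_reg} plus Proposition \ref{prop:Lipschitz_degenerate}) and verifies characterization (2) directly: harmonicity of $u$ in $\{u \neq 0\}$ away from the vertex, the paper's stated observation that $u(0) \neq 0$ forces harmonicity at the vertex, and then the harmonic-replacement/maximum-principle comparison giving the sub-mean-value inequality on small balls and on $\cC_r$. This is a correct and classical fact (a nonnegative continuous function harmonic on its positivity set is subharmonic), and by the equivalences in the subharmonicity proposition it yields exactly the same conclusion. What each route buys: the variational proof is independent of continuity and directly produces the distributional inequality that feeds the ACF monotonicity formula (Lemma \ref{lemma:ACF}), but at the vertex it would still require a separate check of the mean-value property; your proof leans on the already-established continuity and on solvability of the Dirichlet problem and the maximum principle on $\cC_r$ (both available via Proposition \ref{prop:harmonic_functions}), and in exchange it handles the vertex uniformly, since the paper's definition of subharmonicity there is precisely the mean-value property you verify. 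The only point worth flagging is that your treatment of the case $0 \in \cC_1^+$ rests on the paper's unproved assertion that $u(0) \neq 0$ implies $u$ is harmonic at the origin; since the paper itself states this, the reliance is legitimate, but a fully self-contained write-up would justify it (e.g.\ by the same replacement argument on a small $\cC_r$ on which $u > 0$).
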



\subsubsection{Stability}

In the previous part we saw that for a minimizer $u$, the positive and negative parts $u^\pm$ are automatically subharmonic continuous functions and we even have a modulus of continuity for them. The stability of minimizers by uniform convergence depends on uniform equicontinuity and non degeneracy estimates. This allow us to say that if two minimizers are uniformly close then their zero sets are also close in the Hausdorff metric.

\begin{theorem}[Stability]\label{thm:stability}
Let $\{u_k\}$ be a sequence of minimizers of $J = J(\cC_1,\l_+,\l_-)$ with $\l_+$ and $\l_-$ different from zero converging to a function $u$ in $\cC_1$ with respect to the $H^1$ norm. Then:
\begin{enumerate}
 \item $\{u_k\}$ also converges uniformly to $u$ in $\cC_{1/2}$,
 \item Each one of the sets $\{u_k>0\}\cap\cC_{1/2}$ and $\{u_k<0\}\cap\cC_{1/2}$ converge to the respective set $\{u>0\}\cap\cC_{1/2}$, $\{u<0\}\cap\cC_{1/2}$ with respect to the Hausdorff distance,
 \item $u$ is also a minimizer of $J$.
\end{enumerate}
\end{theorem}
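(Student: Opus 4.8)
The plan is to prove the three claims of Theorem~\ref{thm:stability} in the order stated, since each relies on the previous one.

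\textbf{Uniform convergence.} First I would upgrade the hypothesized $H^1$ convergence to uniform convergence. The key ingredient is the uniform modulus of continuity supplied by Theorem~\ref{thm:initial_reg}: since each $u_k$ is a minimizer with (after normalizing) a uniform $H^1$ bound on $\cC_{1/2}$, the estimate $|u_k(x)-u_k(y)|\leq C|x-y|^{\a/4}$ holds with a constant $C$ independent of $k$. Thus $\{u_k\}$ is equicontinuous and uniformly bounded on $\cC_{1/2}$, so by Arzel\`a--Ascoli a subsequence converges uniformly to some continuous limit. Since the $H^1$ limit is already $u$, the uniform limit must agree with $u$ almost everywhere, hence everywhere by continuity, and a standard subsequence argument promotes this to uniform convergence of the whole sequence.

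\textbf{Hausdorff convergence of the phases.} This is where the nondegeneracy estimates enter, and I expect it to be the main obstacle. The inclusion ``limit set contained in the Hausdorff limit of $\{u_k>0\}$'' is the easy half and follows from uniform convergence: if $u(x_0)>0$ then by continuity $u_k>0$ near $x_0$ for large $k$. The reverse containment is the delicate one. Suppose $x_k\in\{u_k>0\}$ with $x_k\to x_0$; I must rule out that $x_0$ lands in the interior of $\{u=0\}$. Here the nondegeneracy of minimizers (from the appendix, adapted from \cite{ACF84}) says that wherever $u_k(x_k)=0$ is a free boundary point, $\sup_{B_r(x_k)}u_k^+\geq c\, r$ for small $r$, with $c$ depending on $\l_+\neq 0$; passing to the uniform limit forces $\sup_{B_r(x_0)}u^+\geq c\,r$, which is incompatible with $x_0$ lying in the interior of the zero set. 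The analogous argument with $\l_-\neq 0$ handles $\{u_k<0\}$. The assumption that both $\l_+$ and $\l_-$ are nonzero is precisely what guarantees two-sided nondegeneracy, which is why it appears in the hypotheses; I would be careful that the nondegeneracy constant is uniform in $k$ and that the estimate survives passage to the limit, since the zero set itself can move.

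\textbf{The limit is a minimizer.} Finally, to show $u$ minimizes $J$ over $\cC_1$ I would take an arbitrary competitor $v$ with $v-u\in H_0^1(\cC_1)$ and build from it competitors $v_k$ against $u_k$ by correcting boundary values, so that $J(u_k)\leq J(v_k)$. The Dirichlet term passes to the limit by weak lower semicontinuity of $\int|Du|^2$ together with $H^1$ convergence. The measure terms $\l_\pm\chi_{\{u_k>0\}}$ and $\l_\pm\chi_{\{u_k<0\}}$ are the subtle part, since indicator functions are not weakly continuous; here the Hausdorff convergence from part~(2), combined with the nondegeneracy that prevents the positivity and negativity sets from collapsing or fattening in measure, lets me pass to the limit in $\int\chi_{\{u_k>0\}}$ and $\int\chi_{\{u_k<0\}}$. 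Assembling these gives $J(u)\leq\liminf_k J(u_k)\leq\limsup_k J(v_k)=J(v)$, which is the desired minimality. Throughout, the reduction to the flat setting via the isometry $\phi_l$ on compact subsets of $\cC_1\sm\{0\}$ lets me invoke the classical estimates of \cite{ACF84} directly away from the vertex, and the continuity up to the vertex from Theorem~\ref{thm:initial_reg} handles the remaining point.
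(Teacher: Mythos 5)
Your parts (1) and (3) follow the paper's proof essentially verbatim: the uniform modulus from Theorem \ref{thm:initial_reg} plus Arzel\`a--Ascoli and uniqueness of the $L^2$ accumulation point give uniform convergence, and minimality of the limit comes from lower semicontinuity of the Dirichlet term together with the convergence of the phase sets from part (2). The genuine gap is in part (2), and it sits exactly where you flagged it and then moved on: the uniform-in-$k$ nondegeneracy \emph{near the vertex}. The estimate you invoke, $\sup_{B_r(x_k)}u_k^+\geq c\,r$ at free boundary points with $c$ independent of $k$, is imported from \cite{ACF84} through the isometry $\phi_l$, and that transfer is only legitimate on balls $B_r(x)$ with $r\leq|x|$ that avoid the vertex; the constants obtained this way degenerate as the point approaches the vertex, in the same way the Lipschitz bound of Proposition \ref{prop:Lipschitz_degenerate} blows up like $r^{-1}$. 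Worse, when $l>2\pi$ even Lipschitz continuity of minimizers at the vertex is open (harmonic functions there are merely $C^{2\pi/l}$), so a linear-growth estimate holding uniformly up to the vertex is precisely what cannot be cited --- and the free boundary points $x_k$ in your argument may converge to the vertex, which is the only case of interest in this paper. Saying ``I would be careful that the nondegeneracy constant is uniform in $k$'' is naming the missing lemma, not proving it.

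The paper closes this hole with a statement of a different flavor, its Lemma on non degeneracy at the vertex: for every $\e$ there is $\d>0$ such that $|u|<\d$ on $\cC_\e$ forces $u\equiv0$ on $\cC_{\e/2}$. Its proof is not a flat-ball comparison but a barrier built from the problem's own minimizers: by the lattice principle (Lemma \ref{lattice}) the supremum of all minimizers with constant boundary value $\d$ on $\p\cC_\e$ is a maximal, radially symmetric minimizer, computed explicitly as $\varphi_\d(x)=\l_+r_0\ln^+(r_0/|x|)$, which vanishes on $\cC_{\e/2}$ once $\d$ is small; any minimizer trapped below $\d$ lies below $\varphi_\d$ and hence vanishes near the vertex. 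Combined with the classical theory away from the vertex through a covering argument, this yields Corollary \ref{coro:stability_of_the_zero_set} (Hausdorff stability of the zero sets), from which part (2) follows. So your outline is sound away from the vertex, but at the vertex it presupposes the very estimate whose possible failure is the point of the paper; to repair it you would need to supply the lattice-principle barrier argument, or some substitute ``smallness implies vanishing'' statement at the vertex, rather than asserting uniformity of the classical constant.
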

 

\subsubsection{Optimal regularity}

The optimal regularity expected for this problem can not be better than Lipschitz as in the classical case. On the other hand harmonic functions defined over cones with length $l > 2\pi$ may not be Lipschitz. Here we focus mainly on the case when $l\leq 2\pi$ in order to obtain the optimal regularity for the minimizers of $J$.

\begin{theorem}[Optimal regularity when $l \leq 2\pi$]\label{thm:optimal_reg_accute}
Let $l \leq 2\pi$, $u$ be a minimizer of $J = J(\cC_1,\l_+,\l_-)$ with $\|Du\|_{L^2(\cC_1)} \leq 1$ and $\{u = 0\}\cap\cC_{1/2} \neq \emptyset$; then for every $x_0 \in \cC_{1/4}^+$,
\begin{align*}
|Du(x_0)| &\leq C,\\
|u(x_0)| &\leq C\dist(x_0, \p\cC_1^+\cap\cC_{1/2}).
\end{align*}
\end{theorem}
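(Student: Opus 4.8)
The plan is to reduce the gradient bound to the linear growth estimate and to recover $|Du(x_0)|\le C$ from an interior estimate for positive harmonic functions. Fix $x_0\in\cC_{1/4}^+$ and put $d=\dist(x_0,\p\cC_1^+\cap\cC_{1/2})$; the assumption $\{u=0\}\cap\cC_{1/2}\neq\emptyset$ ensures that the free boundary is nonempty, so $d<\8$ and we are not in the purely harmonic regime. By Corollary \ref{coro:subharmonic}, $u$ is harmonic and positive on the geodesic ball $B_d(x_0)\ss\cC$. Hence, once the linear growth bound $u(x_0)\le Cd$ (the second assertion) is established, the first follows from the interior gradient estimate $|Du(x_0)|\le Cd^{-1}\sup_{B_d(x_0)}u$ for the positive harmonic function $u$. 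Everything therefore reduces to (i) linear growth of $u^+$ from the free boundary, and (ii) an interior gradient estimate for positive harmonic functions that does not degenerate when the ball reaches the vertex.

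For (i) I would adapt the optimal-regularity arguments of \cite{ACF84}. At free boundary points $z\in\p\cC_1^+\cap\cC_{1/2}$ lying on $\p\{u=0\}$, the bound comes from comparison with a harmonic replacement: letting $h$ be harmonic on $B_r(z)$ with $h=u^+$ on $\p B_r(z)$, subharmonicity of $u^+$ (Corollary \ref{coro:subharmonic}) gives $u^+\le h$, and testing minimality against the competitor that swaps $u^+$ for $h$ yields, after integration by parts,
\begin{align*}
\int_{B_r(z)}|D(h-u^+)|^2\;\le\;\l_+\,\big|\{u\le 0\}\cap B_r(z)\big|.
\end{align*}
The nondegeneracy estimates recalled in Section \ref{sec:Preliminaries} make $\{u\le 0\}$ occupy a definite fraction of $B_r(z)$, so the right side is $O(r^2)$, and as in \cite{ACF84} this together with subharmonicity gives $\sup_{B_r(z)}u^+\le Cr$. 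At genuine two-phase points, where $u^+$ and $u^-$ meet, I would instead invoke the Alt--Caffarelli--Friedman monotonicity formula; in dimension two its weight $|x-z|^{n-2}$ trivializes, so the monotonicity of $(\int_{B_r(z)}|Du^+|^2)(\int_{B_r(z)}|Du^-|^2)r^{-4}$ bounds this product at all small scales and, combined with the subharmonicity of each phase, again produces $\sup_{B_r(z)}u^\pm\le Cr$. Choosing $z$ to realize $d$ and $r\sim d$ then gives $u(x_0)\le Cd$.

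The main obstacle, and the sole place where $l\le 2\pi$ is essential, is the vertex. Away from the origin the cone is flat, so the comparison in (i) and the gradient estimate in (ii) are the classical Euclidean ones. The danger is that $B_d(x_0)$ or $B_r(z)$ contains, or nearly touches, the origin, where there is no tangent plane and Proposition \ref{prop:Lipschitz_degenerate} a priori allows $|Du|$ to grow like $r^{-1}$. To rule this out I would re-center at the vertex and apply Proposition \ref{prop:harmonic_functions}: a positive harmonic function on $\cC_\rho$ expands as $\sum_k r^{2\pi k/l}(a_k\cos\frac{2\pi k}{l}\theta+b_k\sin\frac{2\pi k}{l}\theta)$, and for $l\le 2\pi$ every nonconstant exponent satisfies $2\pi k/l\ge 1$. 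Consequently its gradient is bounded by $C\rho^{-1}\sup_{\cC_\rho}u$ uniformly up to and including the vertex, so the interior estimate (ii) survives the passage through the origin; the continuity up to the vertex provided by Theorem \ref{thm:initial_reg} guarantees that $u$ is indeed a continuous positive harmonic function to which the expansion applies. For $l>2\pi$ the lowest exponent $2\pi/l<1$ forces $|Du|$ to blow up at the vertex, which is precisely why optimal regularity is confined to $l\le 2\pi$.
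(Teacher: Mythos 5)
Your reduction to (i) linear growth away from the free boundary plus (ii) an interior gradient estimate is reasonable, and your Fourier-series argument for (ii) is correct: when $l\le 2\pi$ every nonzero exponent $2\pi k/l$ is at least $1$, so a positive harmonic function on $\cC_\rho$ satisfies $|Du|\le C\rho^{-1}\sup_{\cC_\rho}u$ uniformly up to the vertex. The genuine gap is in (i): your growth estimate at a free boundary point $z$ is carried out entirely with flat tools (harmonic replacement in flat balls, the planar ACF formula), and these are only available at scales $r\lesssim|z|$, where $B_r(z)$ is isometric to a Euclidean disc. When the relevant scale $r\sim d$ is comparable to or larger than $|z|$ --- in particular when the free boundary approaches or contains the vertex, which is exactly the delicate regime this theorem is about --- the ball wraps around the vertex and none of your tools apply; your Fourier expansion does not help there, because $u$ is then not harmonic on any $\cC_\rho$. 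The paper fills precisely this hole: a vertex-centered ACF formula on the cone (Lemma \ref{lemma:ACF}), admissible with exponent $\a=4\pi/l\ge 2$ precisely when $l\le2\pi$, is spliced in Lemma \ref{lemma:bound_gradient} with the classical ACF on the flat ball $B_{|x_0|}(x_0)$ to obtain gradient bounds at two-phase points arbitrarily close to the vertex, with constants that do not degenerate as $|x_0|\to0$; the bounds are then propagated into the positivity set by logarithmic barriers on annuli, the vertex-centered monotonicity of the averaged Dirichlet energy (Lemma \ref{lemma:dirichlet_monotonicity}), and a covering/Harnack-chain argument, with the cases $0\in\cC_1^+$ and $0\in\p\cC_1^+$ treated by separate lemmas. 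So $l\le2\pi$ enters not only through your expansion but, more importantly, through the admissible exponent in the cone ACF formula, which your proposal never invokes.

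A second, smaller gap: at two-phase points the ACF formula bounds only the product $|Du^+(z)|^2|Du^-(z)|^2$, and subharmonicity cannot upgrade this to $\sup_{B_r(z)}u^\pm\le Cr$ for each phase separately, since one factor may be arbitrarily small. The paper converts the product bound into individual bounds via the free boundary flux balance $|Du^+|^2-|Du^-|^2=\l_+^2-\l_-^2$, which uses the standing hypothesis $\l_+\neq\l_-$; your proposal needs this step too. Finally, a minor point: your harmonic-replacement inequality only yields $\int_{B_r(z)}|D(h-u^+)|^2\le Cr^2$, which by itself gives H\"older-type rather than linear control; the classical linear growth is the dichotomy argument of \cite{ACF84} (if the spherical average of $u^+$ exceeds $Cr$, the whole ball is in the positivity set, contradicting $z\in\p\{u>0\}$), but since you cite \cite{ACF84} this part is repairable.
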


In the case $l > 2\pi$ we can still can ask ourselves if the minimizer $u$ remains Lipschitz up to the vertex if $u(0)=0$. This is the case for instance of problems with one phase. This follows from the observation that away from the origin the problem inherits the regularity from the classical case, therefore the gradient along the free boundary is constant independently of how close we get to the origin. In the case of having two phases there might be still some balance between the positive and negative phase that allows the gradient to grow to infinity as we approach the vertex. However we suspect that when $\lambda^+ \neq \lambda^-$ this is not the case. 


\subsubsection{Blows-up}

As a consequence of the stability and the optimal regularity we obtain that a sequence of Lipschitz dilations of a given minimizer of $J$ and centered at the origin, have an accumulation point which is also a minimizer $J$ over any compact set of the cone. Moreover, by proving a monotonicity formula as in \cite{MR1759450} we obtain that such an accumulation point is a 1-homogeneous function.

\begin{corollary}[Blow-up limits]\label{coro:blowup} 
Let $l\leq 2\pi$ and $u$ be a minimizer of $J = J(\cC_1,\l_+,\l_-)$ with $u(0)=0$ and $\|Du\|_{L^2(\cC_1)} \leq 1$. For any sequence of blow-up $u_k = r_k^{-1}u(r_k\cdot)$ with $r_k\to0$ we have that there exist an accumulation point $u\in C^{0,1}_{loc}(\cC)$ such that:
\begin{enumerate}
\item $u$ is also a minimizer of $J(K,\l_+,\l_-)$ for any compact set $K\ss\cC$,
\item $u$ is a 1-homogeneous function in $\cR^2/\{\vec{\theta} \in l\Z\}$.  
\end{enumerate}
\end{corollary}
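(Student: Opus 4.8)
The plan is to obtain compactness from the optimal regularity, identify the accumulation point as a minimizer through the stability statement, and then force $1$-homogeneity by means of a Weiss-type monotonicity formula. Throughout I denote the blow-up limit by $u_0$ to distinguish it from the original minimizer $u$.

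First I would record the scaling of the functional. A change of variables in two dimensions (where the cone carries the flat area form away from the vertex and radial dilations scale it by $r_k^2$) gives $J(u_k,\cC_\rho,\l_+,\l_-) = r_k^{-2}\,J(u,\cC_{r_k\rho},\l_+,\l_-)$, so each blow-up $u_k = r_k^{-1}u(r_k\cdot)$ is again a minimizer, now on the dilated cone $\cC_{1/r_k}$, with $u_k(0)=0$. Since $l\leq 2\pi$ and $u(0)=0$, so that $\{u=0\}\cap\cC_{1/2}\neq\emptyset$, Theorem \ref{thm:optimal_reg_accute} (applied to both phases) yields $|Du|\leq C$ on $\cC_{1/4}$; hence $|Du_k(x)| = |Du(r_kx)|\leq C$ for $x\in\cC_{1/(4r_k)}$, and $|u_k(x)|\leq C|x|$ because $u_k$ vanishes at the vertex. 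Consequently, on every fixed compact $K\ss\cC$ the family $\{u_k\}$ is eventually defined and is uniformly Lipschitz and uniformly bounded.

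Arzel\`a--Ascoli, applied along an exhaustion of $\cC$ by compact sets and followed by a diagonal argument, gives a subsequence converging locally uniformly to some $u_0\in C^{0,1}_{loc}(\cC)$ with $u_0(0)=0$, while the uniform Lipschitz bounds provide uniform $H^1_{loc}$ bounds and hence weak $H^1_{loc}$ convergence. A standard competitor argument based on the minimality of the $u_k$ then rules out loss of Dirichlet energy and upgrades this to strong $H^1_{loc}$ convergence, which is the hypothesis of Theorem \ref{thm:stability}. Stability identifies $u_0$ as a minimizer of $J(K,\l_+,\l_-)$ on every compact $K\ss\cC$ and gives Hausdorff convergence of the positive and negative phases; in particular the measure terms pass to the limit, $\int_K\chi_{\{u_k>0\}}\to\int_K\chi_{\{u_0>0\}}$ and likewise for $\{u<0\}$. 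This is conclusion (1).

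For conclusion (2) I would introduce the Weiss energy
\[
W(u,r) = \frac{1}{r^2}\int_{\cC_r}\left(|Du|^2 + \l_+\chi_{\{u>0\}} + \l_-\chi_{\{u<0\}}\right) - \frac{1}{r^3}\int_{\p\cC_r} u^2\,d\sigma,
\]
and, following \cite{MR1759450}, prove that $r\mapsto W(u,r)$ is non-decreasing, with $W'(r)$ a non-negative multiple of $\int_{\p\cC_r}(\p_\nu u - u/r)^2\,d\sigma$, so that $W'\equiv 0$ on an interval forces $\p_\nu u = u/r$ there. The same scaling gives $W(u_k,\rho) = W(u,r_k\rho)$; since $W(u,\cdot)$ is monotone and bounded (the estimate $|u|\leq Cr$ controls the subtracted boundary term), the limit $W(u,0^+)$ exists and is independent of $\rho$. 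Passing to the limit in $k$, again using the strong $H^1_{loc}$ and measure-term convergence, gives $W(u_0,\rho)=W(u,0^+)$ for every $\rho$. Thus $\rho\mapsto W(u_0,\rho)$ is constant, its $\rho$-derivative vanishes identically, and therefore $\p_r u_0 = u_0/r$ on every arc, which integrates to $u_0(r,\theta)=r\,g(\theta)$; this is precisely the $1$-homogeneity on $\cR^2/\{\theta\in l\Z\}$ asserted in (2).

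I expect the crux to be the convergence of the measure terms $\int\chi_{\{u_k>0\}}$ and $\int\chi_{\{u_k<0\}}$, which is exactly where the non-degeneracy underlying Theorem \ref{thm:stability} is indispensable: without it a phase could thin out in the limit, the Weiss energy could drop, and the identity $W(u_0,\rho)=W(u,0^+)$ would fail. A secondary delicate point is justifying the monotonicity formula across the vertex: although the computation is formally the classical one, one must check that integrating by parts produces no spurious contribution at $0$, which holds precisely because $l\leq 2\pi$ makes $u$ Lipschitz with $u(0)=0$ so that Green's formula remains valid through the vertex (Section \ref{sec:Preliminaries}).
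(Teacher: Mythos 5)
Your proposal is correct and follows essentially the same route as the paper: uniform Lipschitz bounds from the optimal regularity and scaling give compactness, Theorem \ref{thm:stability} identifies the blow-up limit as a minimizer, and a Weiss-type monotonicity formula, constant along the blow-up sequence, forces $1$-homogeneity via its equality case. The only cosmetic differences are that the paper writes the Weiss energy with the averaged radial-derivative term $\frac{1}{r}\int_0^r\int_{\p\cC_1}\left(Du(tx)\cdot x\right)^2 dt$ in place of your classical boundary term $\frac{1}{r^3}\int_{\p\cC_r}u^2$, and it deduces constancy of $W$ for the limit $u_0$ by comparing $W(\cC_{\rho_{m_k}r_2},u)\leq W(\cC_{\rho_k r_1},u)$ along a diagonal subsequence rather than through the existence of the limit $W(u,0^+)$ -- both devices are equivalent here, and your identification of the measure-term convergence (via non-degeneracy) as the crux matches the paper's reliance on Theorem \ref{thm:stability}.
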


\section{The vertex and the free boundary: Case $l < 2\pi$}

In this section we show that if $\C$ is a cone with length $l < 2 \pi$, then $0 \notin (\p\W^+\cup\p\W^-)$ for any minimizer $u$. The idea is to reduce the problem to 1-homogeneous minimizers by using Corollary \ref{coro:blowup}.
\begin{theorem}\label{thm:avoid_vertex}
Let $l< 2\pi$ and $u$ be a minimizer of $J = J(\cC_1,\l_+,\l_-)$. Then $0 \notin(\p\W^+\cup\p\W^-)$.   
\end{theorem}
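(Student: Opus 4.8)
The plan is to argue by contradiction and reduce, through a blow-up, to a classification of $1$-homogeneous minimizers. Suppose $0 \in \p\W^+ \cup \p\W^-$; after possibly replacing $u$ by $-u$ and swapping $\l_+,\l_-$, assume $0 \in \p\W^+$. By the modulus of continuity up to the vertex (Theorem \ref{thm:initial_reg}) we have $u(0)=0$, and the nondegeneracy underlying the stability statement guarantees $\sup_{\cC_r} u^+ \geq c\,r$, so the positive phase cannot disappear under rescaling. Applying Corollary \ref{coro:blowup} to a sequence $u_k = r_k^{-1}u(r_k\cdot)$ with $r_k \to 0$ produces an accumulation point $u_0$ which is a $1$-homogeneous minimizer of $J$ on every compact subset of $\cC$, and by the Hausdorff convergence of the phases (Theorem \ref{thm:stability}) the vertex still lies on $\p\{u_0>0\}$. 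It therefore suffices to show that, when $l<2\pi$, no nonzero $1$-homogeneous minimizer can carry the vertex on its free boundary.

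Next I would classify such $u_0$. Writing $u_0(r,\theta)=r\,g(\theta)$ with $g$ a function on the circle $\R/l\Z$, harmonicity on $\{u_0\neq0\}$ reads $g''+g=0$ there, so on each connected component of $\{g>0\}$ or $\{g<0\}$ the profile $g$ is a pure sine vanishing at both endpoints; since consecutive zeros of a sine are at distance $\pi$, every such nodal arc has angular length exactly $\pi$. A convexity remark (no function on the circle can satisfy $g''=-g$ with a fixed sign everywhere) shows that genuine arcs exist as soon as $u_0\not\equiv0$, and a direct computation gives $|Du_0|^2=g^2+g'^2$, constant on each sector, so the free boundary conditions force the amplitude to be $\l_+$ on positive arcs and $\l_-$ on negative arcs. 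Because two disjoint arcs of length $\pi$ cannot coexist in a circle of circumference $l<2\pi$, the minimizer $u_0$ must be one-phase: up to rotation, $u_0=\l_+\,r\,(\sin\theta)\chi_{\{0<\theta<\pi\}}$, a single positive sector of opening $\pi$ with $u_0=0$ on the complementary gap of opening $l-\pi<\pi$.

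It then remains to construct a competitor on $\cC_1$ with the same boundary values as $u_0$ on $\p\cC_1$ but strictly smaller energy, contradicting minimality. This is exactly where the angular deficit $2\pi-l>0$ must enter: when $l=2\pi$ the profile develops to the flat half-plane solution $\l_+\,x_2^+$, which is a genuine minimizer, so no competitor can win there and any strict gain must be paid for by the deficit. Note first that the trivial competitors fail for the right reasons: filling the gap harmonically only increases the positivity area, and cutting the corner of the free boundary on the positive side gains nothing since the two free-boundary rays are collinear (opening $\pi$) there. The gain must instead come from \emph{detaching} the positive phase from the vertex, setting $u_0=0$ on a small region of the positive sector adjacent to the origin and creating a short new free-boundary arc; the mechanism is the same ``corner-cutting'' that makes geodesics avoid the vertex in Proposition \ref{p:geo}, namely that the concentrated positive curvature at the vertex of a cone with $l<2\pi$ makes the new arc cheap relative to the positive area it removes.

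The main obstacle is precisely this last energy bookkeeping: I must show that, on the developed cone, the area saved together with the geometry of the short connecting arc strictly beats the extra Dirichlet energy and new interface created by the detachment, with the gain degenerating to zero as $l\uparrow2\pi$ in accordance with the flatness of the borderline case. I expect to carry this out by writing the competitor explicitly in the developed coordinates and expanding $J(v,\cC_1)-J(u_0,\cC_1)$ as a quantity depending monotonically on $2\pi-l$, then checking its sign; a comparison or isoperimetric estimate at the vertex, rather than a bare-hands computation, may be the cleanest route. A secondary technical point is to make sure the nondegeneracy and stability inputs apply to the relevant phase even if one of $\l_+,\l_-$ vanishes, which is handled by observing that the phase touching the vertex is the one carrying the positive weight.
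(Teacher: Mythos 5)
Your first half is sound and matches the paper: the blow-up via Corollary \ref{coro:blowup}, the exclusion of a two-phase point at the vertex by counting nodal arcs of angular length $\pi$ (this is exactly Lemma \ref{lemma:case1}), and the reduction to showing that the one-phase profile $v=x_2^+$, a sector of opening $\pi$, is not a minimizer on a cone with $l<2\pi$. But the proof stops exactly where the real work begins: you explicitly defer the competitor construction and its ``energy bookkeeping'' (``I expect to carry this out\dots''), so as written the crucial step --- producing a strictly better competitor than $x_2^+$ --- is not proved. This is a genuine gap, not a technical loose end: the entire content of Section 3 of the paper beyond the blow-up reduction is devoted to this step.

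Moreover, the mechanism you propose would fail. Detaching the positive phase from the vertex yields a competitor $v_0$ with $\{v_0>0\}\ss\{v>0\}$; developing the cone isometrically onto $\W\ss\R^2$ (possible since $l<2\pi$) and extending by $v=x_2^+$ across the missing wedge, such a $v_0$ becomes an admissible competitor for the \emph{flat} problem in a large ball, where $x_2^+$ is the unique minimizer subject to its own boundary values --- so detaching strictly \emph{increases} the energy. The paper records precisely this observation right before its construction and draws the opposite moral: a winning competitor must \emph{add} positivity set, specifically a blob $E$ in the lower half-plane whose development overlaps the missing wedge $\R^2\sm\W$, because on the cone the area $|E\cap(\R^2\sm\W)|$ is never paid for. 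The quantitative heart is the functional $F(E)=|E|-\int_\R u_E(x_1,0)\,dx_1$ together with Remark \ref{rmk:strategy}, the uniform triangle bound $F(A_c)\leq 2$ of Lemma \ref{lemma:initial}, and the scale--translate--truncate iteration of Lemma \ref{lemma:inductive}, which gives $F_{k+1}\leq 3F_k/(3+F_k)\to 0$ while the wedge overlap $|D|$ stays fixed. Your dismissal of competitors that enlarge the positivity region (``filling the gap only increases the positivity area'') rules out exactly the construction that works; note also that the vanishing of the gain as $l\uparrow 2\pi$ is reflected not in a local expansion at the vertex but in the iteration count $k_0=k_0(l)$ and the size of the sets blowing up as $|D|\to 0$. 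Your side remark about nondegeneracy when $\l_-=0$ is legitimate and is handled in the paper by the reduction to the one-phase problem before blowing up again.
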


We split the proof into several Lemma.

\begin{lemma}\label{lemma:case1}
Let $l< 2\pi$ and $u$ be a minimizer of $J = J(\cC_1,\l_+,\l_-)$. Then $0 \notin (\p\W^+\cap\p\W^-)$. 
\end{lemma}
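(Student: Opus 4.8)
The plan is to argue by contradiction and pass to a blow-up, where $1$-homogeneity reduces the statement to an elementary length count on the circle of circumference $l$. Suppose $0\in\p\W^+\cap\p\W^-$. Since $u$ is continuous up to the vertex by Theorem \ref{thm:initial_reg}, the fact that the vertex is a limit of points of $\{u>0\}$ forces $u(0)\ge0$, while being a limit of points of $\{u<0\}$ forces $u(0)\le0$; hence $u(0)=0$. This places us in the hypotheses of Corollary \ref{coro:blowup}: fixing any sequence $r_k\to0$, I extract a blow-up limit $u_0$ of $u_k:=r_k^{-1}u(r_k\cdot)$ which is a $1$-homogeneous minimizer of $J$ on every compact subset of $\cC$. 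Writing $u_0(r,\theta)=r\,g(\theta)$ for a continuous $g$ on the circle of circumference $l$, harmonicity of $u_0$ off its zero set becomes, after the polar computation $\D(rg)=(g''+g)/r$, the ODE $g''+g=0$ on each connected component of $\{g\ne0\}$.

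The key input beyond homogeneity is that \emph{both} phases survive in the limit. Here I would invoke the Hausdorff convergence of the sets $\{u_k>0\}$ and $\{u_k<0\}$ from Theorem \ref{thm:stability}, together with the non-degeneracy estimates for $u^\pm$ (available since $\l_+,\l_-\neq0$). Because the vertex lies in both $\p\W^+$ and $\p\W^-$, the rescaled positive and negative sets cannot disappear near $0$, so both $\{u_0>0\}$ and $\{u_0<0\}$ are nonempty and accumulate at the vertex. Consequently $g$ changes sign, and each maximal arc on which $g$ has a fixed sign is a \emph{proper} sub-arc of the circle with $g$ vanishing at both of its endpoints.

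On such an arc, the solution of $g''+g=0$ that vanishes at one endpoint is $C\sin(\theta-\theta_0)$, whose next zero occurs exactly an angle $\pi$ later; hence every positive component and every negative component of $g$ has angular width exactly $\pi$. Since $g$ attains both signs, there is at least one positive component and at least one disjoint negative component, so the total measure of $\{g\ne0\}$ is at least $2\pi$. But this set is contained in a circle of circumference $l<2\pi$, a contradiction. (A dead arc on which $g\equiv0$ only increases the total length and therefore does not affect the count.) This yields $0\notin(\p\W^+\cap\p\W^-)$, as claimed.

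The routine parts are the polar computation and the classification of arc lengths, and the length inequality $2\pi>l$ is where the hypothesis $l<2\pi$ enters decisively. The main obstacle is the second step: making rigorous that the blow-up retains \emph{both} phases touching the vertex. This is precisely where the non-degeneracy of $u^\pm$ and the Hausdorff stability of the free boundaries in Theorem \ref{thm:stability} must be combined carefully, since a priori a blow-up could lose one of the two phases if the corresponding positivity set pinched off at scale $r_k$.
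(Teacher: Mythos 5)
Your proof is correct and follows essentially the same route as the paper: blow up at the vertex via Corollary \ref{coro:blowup} to obtain a $1$-homogeneous minimizer, observe that each phase of such a minimizer spans an arc of angular width exactly $\pi$ (the paper phrases this as ``homogeneous harmonic functions of order one are linear''), and conclude $2\pi \leq l$, contradicting $l < 2\pi$. The only difference is that you spell out two steps the paper leaves implicit---the ODE computation $g''+g=0$ giving the width-$\pi$ arcs, and the use of non-degeneracy together with the Hausdorff stability of Theorem \ref{thm:stability} to guarantee both phases survive the blow-up---both of which are indeed the right ingredients and strengthen rather than alter the argument.
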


\begin{proof}
Suppose by way of contradiction that $0 \in (\p\W^+\cap\p\W^-)$. By Corollary \ref{coro:blowup} we have that there exists a limiting blow up $u_0$ which is homogeneous of order one. But homogeneous harmonic functions of order one are linear and then $\cH^1(\{u_0>0\}\cap\p B_1) = \cH^1(\{u_0<0\}\cap\p B_1)=\pi$. This is a contradiction with $l< 2\pi$. 
\end{proof}

\begin{remark} 
Lemma \ref{lemma:case1} coupled with the compactness and stability results from Section 2 works to show that there exists some $\e>0$ such that the same result holds for $l < \pi - \e$. We won't discuss this proof here as this result is contained in the following Lemmas.
\end{remark}

The previous Lemma reduces the problem to study only cases with just one phase. From now on we will assume without lost of generality that $\l_+=1, \l_-=0$ and the minimizers are non negative. Also, from the previous blow-up argument applied now to solutions with just one phase we can reduce the problem to showing that the function $v = x_2^+$ is not  a minimizer of $J(K)$ for any compact set $K\ss\cC$.

When we talk about the function $x_2^+$ defined in $\cC$ we mean the following: Because $l<2\pi$ there is an isometry
\begin{align*}
\phi:\cC\sm\{0\}\to\W = \{(x_1,x_2)\in\R^2: -\cot(l/2)|x_1| < x_2\}.
\end{align*}
It is in this coordinate system that we define the function $x_2^+$. In the next section we will use that for $u:K\subset\subset\cC\to\R$, the functional $J(u,K)$ can also be computed from $\tilde u = u\circ\phi^{-1}$ and $\tilde K = \phi^{-1}(K)$ in the following way,
\begin{align*}
J(u,K) = \tilde J(\tilde u,\tilde K) = \int_{\tilde K} |D\tilde u|^2 + |\{\tilde u>0\}\cap\tilde K|.
\end{align*}

Notice that a competitor $v_0$ for $v$ in $K$ such that $\{v_0 > 0\}\cap K \ss \{v>0\}\cap K$ gives that $\tilde J(\tilde v_0,\tilde K) > \tilde J(v,\tilde K)$ because $v$ is the unique minimizer of $\tilde J(\tilde K)$ with its boundary data. Therefore, if we want to find competitor with smaller values of $\tilde J(\tilde v_0,\tilde K)$ it is reasonable to look for competitors that add some positivity set to the positivity set that $v$ already has. This is the motivation for the following sections.

From now on we will drop the tildes and work exclusively in $\W = \{(x_1,x_2)\in\R^2: -\cot(l/2)|x_1| < x_2\}$.


\subsection{Reduction to a different optimization}

To find a better competitor than $v = x_2^+$ we will construct a bounded set $E \ss \R^2_- = \{x_2 < 0\}$, with Lipschitz boundary, such that the following expression is arbitrarily small meanwhile keeping the size of $E\cap(\R^2\sm\W)$ not too small,
\begin{align*}
F(E) = |E| - \int_\R u_E(x_1,0)dx_1,
\end{align*}
where $u_E$ is the solution of
\begin{alignat*}{2}
\Delta u_E &= 0 &&\text{ in } E \cup \R^2_+,\\
u_E &= x_2^- &&\text{ in } \R^2 \sm (E \cup \R^2_+),
\end{alignat*}
such that $u_E \to 0$ as $|x|\to\8$. 

Some properties of $F$ are given by the following Lemma.

\begin{lemma}\label{lemma:F}
Given $E \ss \R^2_-$ bounded and with Lipschitz boundary, we have that the following hold,
\begin{enumerate}
 \item Scaling: For $t>0$ the scaled set $tE$ satisfies $F(tE) = t^2F(E)$.
 \item Relation with $J$: For $u_{E,R}$ the solution of
\begin{alignat*}{2}
\Delta u_{E,R} &= 0 &&\text{ in } E_R \cup B_R^+,\\
u_{E,R} &= x_2^- &&\text{ in } \R^2 \sm (E \cup B_R^+),
\end{alignat*}
Then for $v_{E,R} = u_{E,R} + x_2$,
 \begin{align*}
 F(E) = \lim_{R\to\8} \1J(v_{E,R},B_R) - J(v,B_R)\2 \geq 0.
 \end{align*}
\end{enumerate}
\end{lemma}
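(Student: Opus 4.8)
My plan is to treat the two items separately: the first is a routine rescaling, the second is the heart of the matter.

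For the scaling identity I would first observe that $u_{tE}(x) = t\,u_E(x/t)$. Indeed, the right hand side is harmonic in $tE\cup\R^2_+$ (dilation fixes $\R^2_+$), equals $x_2^-$ on $\R^2\sm(tE\cup\R^2_+)$, and decays at infinity; by uniqueness of the bounded harmonic extension it must coincide with $u_{tE}$. Substituting into the definition of $F$ and changing variables $x_1\mapsto tx_1$ in the line integral gives $F(tE)=t^2|E|-t^2\int_\R u_E(x_1,0)\,dx_1=t^2F(E)$.

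For item (2) I would split $J(v_{E,R},B_R)-J(v,B_R)$ into its Dirichlet part and its volume part. Since $v_{E,R}=u_{E,R}+x_2$ equals $x_2^+$ on $B_R\sm(E\cup B_R^+)$ and is harmonic and nonnegative on $\Omega_R:=E\cup B_R^+$ (its boundary values there being $x_2^+\geq 0$, so the maximum principle applies), the positivity set inside $B_R$ is exactly $B_R^+\cup E$ up to a null set; hence the volume part is $|\{v_{E,R}>0\}\cap B_R|-|B_R^+|=|E|$. For the Dirichlet part I would write $v_{E,R}=v+\psi$ with $\psi=u_{E,R}-x_2^-$, noting that $\psi$ vanishes on $\p B_R$ and on $\p\Omega_R\cap B_R$, so $\psi\in H^1_0(\Omega_R)$; weak harmonicity of $v_{E,R}$ on $\Omega_R$ then gives $\int_{B_R}Dv_{E,R}\cdot D\psi=0$, whence $\int_{B_R}|Dv_{E,R}|^2=\int_{B_R}Dv_{E,R}\cdot Dv$ and the Dirichlet difference collapses to $\int_{B_R}Dv\cdot D\psi$. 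Using that $Dv=Dx_2^+$ equals $(0,1)$ on $B_R^+$ and $0$ below, this is $\int_{B_R^+}\p_2\psi\,dx$, and integrating by parts over the half disk (the upper semicircle contributing nothing since $\psi=0$ there) leaves exactly $-\int_{-R}^R\psi(x_1,0)\,dx_1=-\int_{-R}^R u_{E,R}(x_1,0)\,dx_1$. Altogether $J(v_{E,R},B_R)-J(v,B_R)=|E|-\int_{-R}^R u_{E,R}(x_1,0)\,dx_1$.

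It then remains to pass to the limit $R\to\8$ and to verify the sign. The nonnegativity $F(E)\geq 0$ is immediate once the limit exists: each $v_{E,R}$ agrees with $v=x_2^+$ on $\p B_R$, and $x_2^+$ is a global minimizer of the one phase functional, so $J(v_{E,R},B_R)\geq J(v,B_R)$. The main technical obstacle is the convergence of the line integrals, namely showing $u_{E,R}\to u_E$ together with $\int_{-R}^R u_{E,R}(x_1,0)\,dx_1\to\int_\R u_E(x_1,0)\,dx_1$; I would obtain this from monotonicity in $R$ and a decay estimate for $u_E$ along the axis (the perturbation $\psi$ behaving like a dipole away from $E$), which guarantees integrability of $u_E(\cdot,0)$ and uniform control of the tails. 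A secondary point requiring care is the identification of $\{v_{E,R}>0\}$ and of $\p\Omega_R$ where $\p E$ meets $\{x_2=0\}$, since there the harmonic extension is taken across the portion of the axis interior to $\overline{\Omega_R}$.
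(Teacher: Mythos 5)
Your proposal is correct, and for the finite-$R$ identity it is in substance the paper's own computation, organized slightly differently: the paper completes the square, writing $J(v_{E,R},B_R)-J(v,B_R)=|E|-\int_{E\cup B_R^+}|D(v_{E,R}-v)|^2$ and then pairing $v_{E,R}-v$ with $\Delta(v_{E,R}-v)=-\Delta v=-\chi_{\{x_2=0\}}\cH^1$, while you keep the cross term $\int Dv\cdot D\psi$ with $\psi=v_{E,R}-v\in H^1_0(E\cup B_R^+)$ and integrate by parts over the half disk; both arguments rest on exactly the same two facts (weak harmonicity of $v_{E,R}$ tested against $\psi$, and the explicit structure of $v=x_2^+$) and both land on $|E|-\int u_{E,R}(x_1,0)\,dx_1$, with nonnegativity coming, as in the paper, from minimality of $v$ subject to its own boundary values. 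The one place you genuinely diverge is the passage $R\to\infty$, and there you manufacture an obstacle that is not present: since the harmonic extension is glued across the axis only over the contact set, both $u_{E,R}(\cdot,0)$ and $u_E(\cdot,0)$ coincide with the datum $x_2^-=0$ on $\{x_2=0\}\sm(\bar E\cap\{x_2=0\})$, so the line integrals are taken over a single fixed compact segment and there are no tails to control; locally uniform convergence $u_{E,R}\to u_E$ --- which your own monotonicity-in-$R$ comparison already yields, the family being increasing in $R$ and bounded above by the depth of $E$, with the limit identified by the decay condition --- is all that is needed, and the dipole decay estimate and the integrability of $u_E(\cdot,0)$ you list as the ``main technical obstacle'' are superfluous. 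This compact-support observation is precisely the paper's one-line conclusion of the proof. A final caveat, which applies equally to the paper's proof: your maximum-principle identification $\{v_{E,R}>0\}\cap B_R=E\cup B_R^+$, and hence the volume term $|E|$, tacitly requires every component of $E$ to meet the axis; on a component detached from $\{x_2=0\}$ one gets $u_{E,R}=-x_2$ and $v_{E,R}\equiv 0$ there, so that component is charged in $F(E)$ but not in the energy difference. This is harmless here since every set $E$ used later in the paper touches the axis, but it is worth stating the hypothesis.
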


\begin{proof}
(1) follows by the change of variables formula because $u_{tE}=tu_E(t^{-1}\cdot)$.

To prove (2) we take first $R$ sufficiently large such that $B_R \supseteq E$ and use that $v$ minimizes $J(B_R)$ while $v_{E,R}$ is harmonic in $E \cup B_R^+$,
\begin{align*}
0 &\leq J(v_{E,R},B_R) - J(v,B_R),\\
&= |E| + \int_{E \cup B_R^+} |Dv_{E,R}|^2 - |Dv|^2,\\
&= |E| - \int_{E \cup B_R^+} |D(v_{E,R}-v)|^2.
\end{align*}
We use now that in $E \cup B_R^+$ the following holds in the distributional sense $\D(v_{E,R}-v) = -\D v = -\chi_{\{x_2=0\}}\cH^1$.
\begin{align*}
0 &\leq J(v_{E,R},B_R) - J(v,B_R),\\
&= |E| - \int_\R (v_{E,R}-v)(x_1,0)dx_1,\\
&= |E| - \int_\R u_{E,R}(x_1,0)dx_1.
\end{align*}
Sending $R\to\8$ makes $u_{E,R}\to u_E$ uniformly in the bounded set $\bar E\cap\{x_2=0\}$ and therefore also in $\{x_2=0\}$ because both functions are zero in $\{x_2=0\}\sm(\bar E\cap\{x_2=0\})$. This implies that the integral of $u_{E,R}(\cdot,0)$ converges to the integral of $u_E(\cdot,0)$ and this concludes the Lemma.
\end{proof}

\begin{remark}\label{rmk:strategy}
The previous proof also works to show that,
 \begin{align*}
 F(E) - |E\cap(\R^2\sm\W)| \geq \lim_{R\to\8} \1J(v_{E,R},B_R\cap\W) - J(v,B_R\cap\W)\2.
 \end{align*}
 We just have to notice that,
 \begin{align*}
  &J(v_{E,R},B_R\cap\W) - J(v,B_R\cap\W)\\
  &\leq |E| - |E\cap(\R^2\sm\W)| + \int_{E \cup B_R^+} |Dv_{E,R}|^2 - |Dv|^2.
 \end{align*}
 In this sense we can make clear what is our strategy. By finding $E$ such that $F(E) - |E\cap(\R^2\sm\W)|<0$ we would be able to get a better competitor than $v$ in $B_R\cap\W$ for some $R$ sufficiently large.
\end{remark}


\subsection{Initial step}

The following Lemma gives an estimate of $F$ in isosceles triangles. This will be the basic configuration which we will use in our inductive construction.

\begin{lemma}\label{lemma:initial}
Given $c>0$, let $A_c$ the isosceles triangle with vertices $(-c,0),(c,0)$ and $(-1,0)$. Then $F(A_c) \leq 2$.
\end{lemma}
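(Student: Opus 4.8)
The plan is to turn the bound $F(A_c)\le 2$ into a one-sided (Dirichlet principle) estimate for a single scalar. Write $B=(-c,c)\times\{0\}$ for the top edge of the triangle, which is an \emph{interior} segment of $U:=A_c\cup\R^2_+$; here $A_c$ is the isosceles triangle with base $B$ and apex $(0,-1)$, so that $|A_c|=c$. Since $u_E$ is harmonic in $\R^2_+$ with boundary value $x_2^-=0$ on the axis outside $B$, it vanishes there, and hence
\[
\int_\R u_E(x_1,0)\,dx_1=\int_B u_E(x_1,0)\,d\cH^1=:I,\qquad F(A_c)=|A_c|-I=c-I .
\]
Everything thus reduces to a lower bound for $I$.

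To obtain one I would introduce $w:=u_E-x_2^-$. By construction $w$ vanishes on $\p U$ (the two slant sides and the axis outside $B$) and decays at infinity, so $w\in H^1_0(U)$; and since $u_E$ is harmonic in $U$ while $\Delta x_2^-$ equals arclength measure on $\{x_2=0\}$, the distribution $-\Delta w$ equals the arclength measure $\mu_B$ on $B$, weakly in $U$. Testing this identity against $w$ gives $I=\int_B w\,d\cH^1=\int_U|\nabla w|^2$, and, $w$ being the $H^1_0(U)$-potential of $B$, the Dirichlet principle yields the key inequality
\[
I\ \ge\ 2\int_B\phi\,d\cH^1-\int_U|\nabla\phi|^2\qquad\text{for every }\phi\in H^1_0(U),
\]
with equality at $\phi=w$. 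This is the engine of the argument: any admissible $\phi$ produces an upper bound for $F(A_c)=c-I$.

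It then remains to exhibit one efficient competitor. I would take $\phi$ equal to the tent $1+x_2-|x_1|/c$ inside $A_c$ --- which vanishes on the slants and equals $1-|x_1|/c$ on $B$ --- and extend it to $\R^2_+$ by the separable profile $(1-|x_1|/c)^+(1-x_2/H)^+$, so that $\phi$ is continuous across $B$ and lies in $H^1_0(U)$. A direct computation gives $\int_B\phi\,d\cH^1=c$, Dirichlet energy $c+c^{-1}$ on the triangle and $\tfrac{2H}{3c}+\tfrac{2c}{3H}$ on the upper half-plane; choosing $H=c$ (the balance dictated by the arithmetic--geometric mean inequality) yields $I\ge c-c^{-1}-\tfrac43$.

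Finally I would split on the size of $c$. For $c\le 2$ nothing is needed beyond $u_E\ge 0$: then $I\ge 0$ and $F(A_c)=c-I\le c\le 2$. For $c>2$ the competitor estimate gives $F(A_c)=c-I\le c^{-1}+\tfrac43<\tfrac12+\tfrac43<2$. The genuine difficulty is precisely this large-$c$ regime, where the trivial bound $F\le|A_c|$ is useless: the key is that the tent competitor recovers essentially all of the area $|A_c|=c$ while paying only an $O(1)$ toll in the upper half-plane, and the balance $H=c$ is exactly what keeps that toll below the threshold $2$.
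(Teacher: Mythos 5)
Your proof is correct, and at bottom it runs on the same engine as the paper's --- an explicit tent-shaped competitor played against $v=x_2^+$, with minimality/harmonic replacement converting it into a bound on $F$ --- but it is packaged on the dual side. The paper works primally: it builds the piecewise-linear $w_{c,h}$ on the quadrilateral with apex height $h$, computes $J(w_{c,h},B_R)-J(v,B_R)=c+\frac{h^2}{h+1}\cdot\frac{c^2+1}{c}-ch$, optimizes $h=\sqrt{c^2+1}-1$ to get the \emph{uniform} bound $2(\sqrt{c^2+1}-1)/c\le 2$, and then passes to the harmonic replacement via Lemma \ref{lemma:F}. You instead work with the potential $w=u_E-x_2^-$, which satisfies $-\Delta w = \mathcal{H}^1$ restricted to $B$ in $U=A_c\cup\R^2_+$, so that $F(A_c)=c-\int_U|\nabla w|^2$ and every admissible $\phi$ gives $F(A_c)\le c-2\int_B\phi+\int_U|\nabla\phi|^2$. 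Unwinding, this is exactly Lemma \ref{lemma:F}(2) evaluated at the competitor $v+\phi$ (note $\int_{\R^2_+}Dv\cdot D\phi=-\int_B\phi$), and your identity $\int_B u_E=\int_U|\nabla w|^2$ already appears inside the paper's proof of Lemma \ref{lemma:F}, since $v_{E,R}-v=u_{E,R}-x_2^-$ there. What your route buys is a clean reduction to one scalar inequality and a mechanical computation with the separable tent; what it costs is that your tent has fixed height $1$, forcing the case split at $c=2$ and the bound $\min(c,\,c^{-1}+\tfrac43)$, whereas optimizing the apex height as the paper does yields a bound uniform in $c$ with no cases. Two small points of hygiene: in two dimensions $w\sim |x|^{-1}$ at infinity, so $w\notin L^2(U)$ and strictly speaking $w\notin H^1_0(U)$; the Dirichlet-principle step should either be run in the homogeneous energy space (your decay $|\nabla w|\sim|x|^{-2}$ does justify the integration by parts $\int_U|\nabla w|^2=\int_B w$ and the cross term against your compactly supported $\phi$), or at finite radius with $u_{E,R}$ followed by $R\to\infty$, as the paper does. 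Finally, like the paper's own proof, you silently read the apex of $A_c$ as $(0,-1)$ rather than the $(-1,0)$ of the statement; that is the intended reading, since $E\subset\R^2_-$.
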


\begin{proof}
Let $B_{c,h}$ be the quadrileteral with vertices at $(-c,0),(0,h),(c,0)$ and $(-1,0)$. We construct first a function $w_{c,h}$ such that $J(w_{c,h},B_{c,h})-J(v,B_{c,h}) \leq 2$. Let for $(x_1,x_2)\in B_{c,h}\cap\{x_1\leq0\}$,
\begin{align*}
 w_{c,h}(x_1,x_2) = \frac{h}{c(h+1)}(x_1 + cx_2 + c).
\end{align*}
For $(x_1,x_2)\in B_{c,h}\cap\{x_1\geq0\}$ we define $w_{c,h}$ by extending it symmetrically, $w_{c,h}(x_1,x_2) = w_{c,h}(-x_1,x_2)$. Outside of $B_{c,h}$ we just make $w_{c,h} = v$. Notice that $w_{c,h}$ is continuous across $\p B_{c,h}$ and it is an admissible competitor against $v$ in any ball $B_R \supseteq B_{c,h}$.

\begin{figure}[h]
  \includegraphics[width = 10cm]{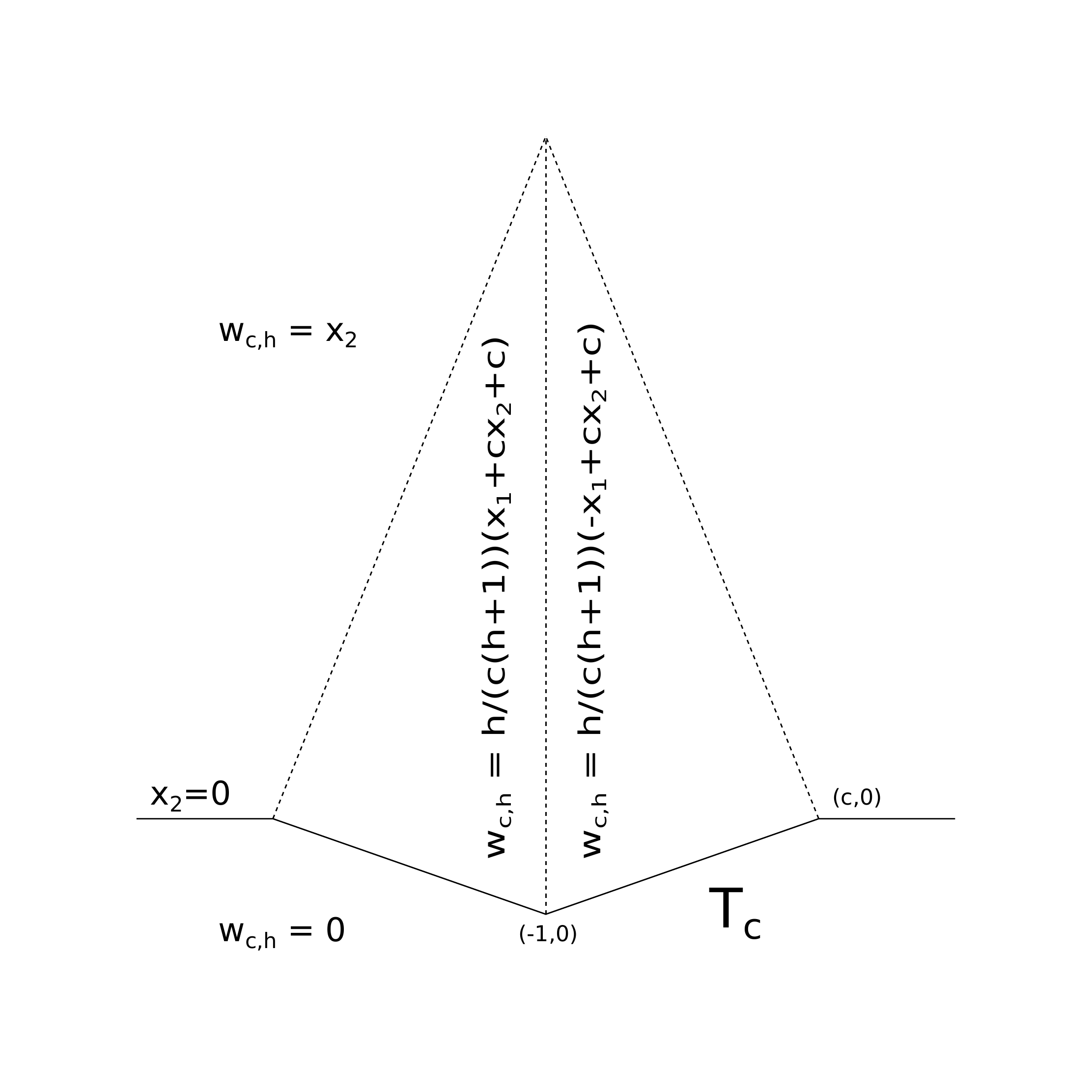}
\end{figure}

Let's compute the difference of the energies and then fix $h$ so that it minimizes it,
\begin{align*}
 J(w_{c,h},B_R) - J(v,B_R) &= c + \1\frac{h^2}{h+1}\2\1\frac{c^2+1}{c}\2 - ch.
\end{align*}
In order to minimize the previous expresion we chose $h = \sqrt{c^2+1}-1$. The previous difference is now,
\begin{align*}
 J(w_{c,h},B_R) - J(v,B_R) &= 2\frac{\sqrt{c^2+1}-1}{c} \leq 2.
\end{align*}

Now we replace $w_{c,h}$ by the harmonic function $v_{A_c,R}$ in $A_c\cup B_R^+$ taking the boundary values $v_R = w_{c,h} = v$ in $\p(A_c\cup B_R^+)$. This makes $J(v_{A_c,R},B_R) \leq J(w_{c,h},B_R)$ and $J(v_{A_c,R},B_R) - J(v,B_R) \leq 2$. By taking $R\to\8$ and using Lemma \ref{lemma:F} we obtain desired estimate for $F(A_c)$.
\end{proof}


\subsection{Inductive step}

Now we describe how to diminish the value of $F(E)$ inductively meanwhile keeping $|E\cap(\R^2\sm\W)|$ bounded away from zero. Consider a set $E \subset\subset \R\times[-1,0]$ and scale it by a factor $t \in (0,1)$, this diminishes the value of $F$ by a factor $t^2$. The next step is to translate $tE \cup \bar \R^2_+$ downwards a distance $(1-t)$ giving us,
\begin{align*}
E_t = \1\1tE \cup \bar\R^2_+\2 - (1-t)e_2\2 \cap \R^2_- \ss \R\times[-1,0].
\end{align*}
This set however is unbounded, so we truncate it by the trapezoid $T_{t,a,b}$, for $a>b>0$, with vertices at $(-a,0),(a,0),(-b,-(1-t))$ and $(b,-(1-t))$, obtaining in this way,
\begin{align*}
E_{t,a,b} = E_t \cap T_{t,a,b} \subset\subset \R\times[-1,0].
\end{align*}

\begin{figure}[h]
  \includegraphics[width = 9cm]{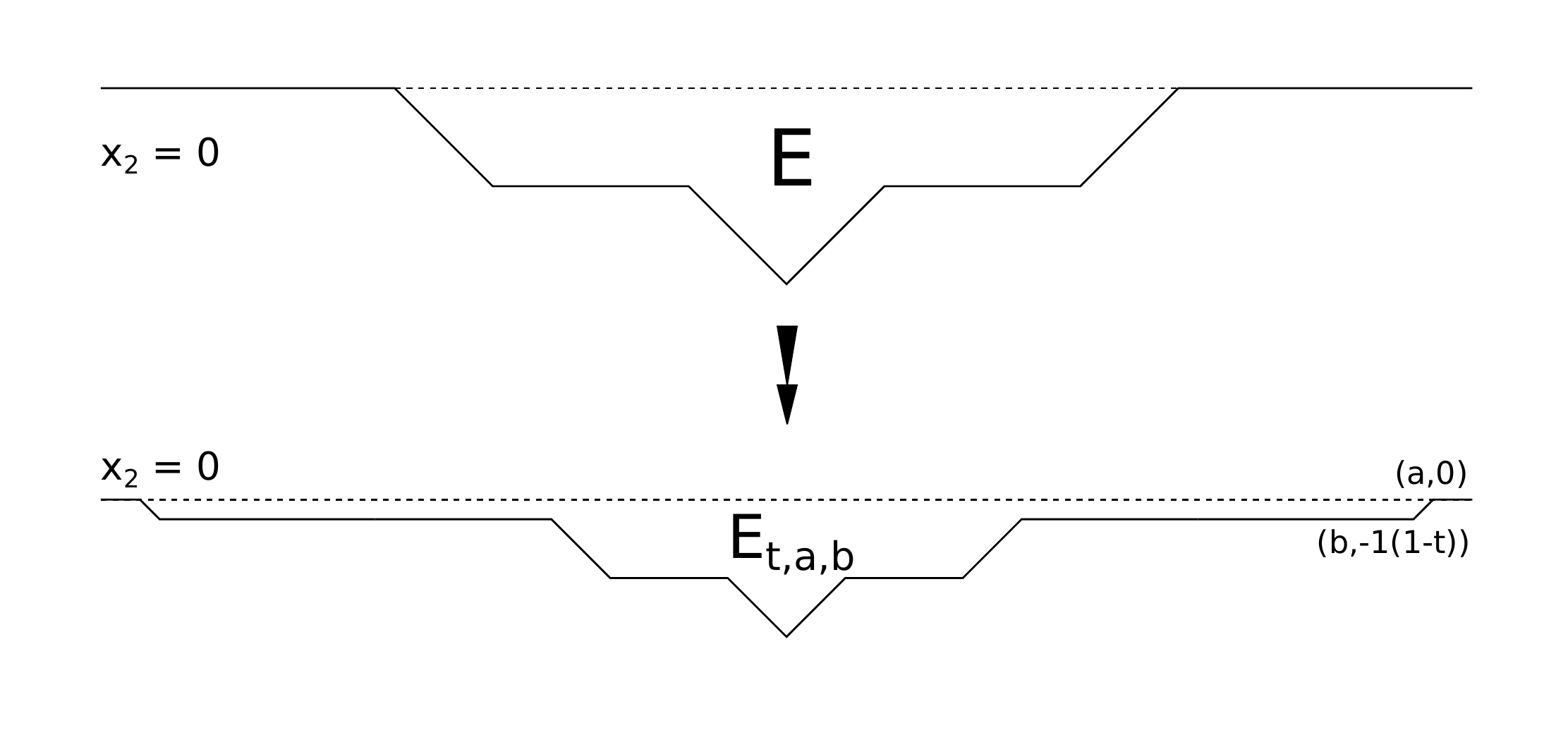}
\end{figure}

Formally we expect $F(E_t)$ to be $t^2F(E)$ however here we are actually subtracting two infinite quantities. The intuition behinds this is that the downwards translation of $tE$ adds as much volume as the amount in which the integral increases. We will see then that the truncation given by $T_{t,a,b}$ can be made such that it does not add to much to the functional. This is the motivation for the following Lemma.

\begin{lemma}\label{lemma:inductive}
Given $E \subset\subset \R\times[-1,0]$ with Lipschitz boundary and symmetric with respect to $\{x_1=0\}$ and $t\in(0,1)$ there exists $a_0>b_0>0$ sufficiently large such that $F(E_{t,a,b}) \leq t^2F(E) + 3(1-t)^2$ for any $a>\min(a_0,b)$ and $b>b_0$.
\end{lemma}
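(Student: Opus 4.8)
The plan is to track how each operation in the construction of $E_{t,a,b}$ changes the quantity $F$, using the relationship with the energy functional $J$ from Lemma \ref{lemma:F}(2) and Remark \ref{rmk:strategy} rather than manipulating the definition of $F$ directly. The scaling step is free by Lemma \ref{lemma:F}(1): $F(tE) = t^2 F(E)$. The subtle point, as the authors themselves flag, is that the downward translation followed by truncation is formally a difference of infinite quantities, so I would not try to compute $F(E_t)$ in isolation. Instead I would compare the competitor $u_{E_{t,a,b}}$ directly against the fully translated (but untruncated) configuration and estimate the two separate effects: (i) the cost of the downward translation of $tE$, and (ii) the cost of truncating by the trapezoid $T_{t,a,b}$.

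\medskip

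First I would set up the harmonic competitor associated to $E_{t,a,b}$ and exploit the heuristic that a vertical translation of the positivity set downward by $(1-t)$ adds area equal to the amount by which the boundary integral $\int u_E(x_1,0)\,dx_1$ grows; in the limit where the truncation is pushed out to infinity this cancellation makes $F$ of the translated set equal to $t^2 F(E)$. Concretely, I would write $F(E_{t,a,b})$ as $t^2 F(E)$ plus an error term coming solely from replacing the idealized infinite translated set $E_t$ by the truncated trapezoidal region, and then show this error is controlled by $3(1-t)^2$ once $a,b$ are large. The factor $(1-t)^2$ is the natural scale here: the trapezoid $T_{t,a,b}$ has height $(1-t)$, so by the scaling property of $F$ any localized defect introduced near the slanted truncation edges should carry two powers of $(1-t)$, while the constant $3$ absorbs the geometric constants from the two slanted sides plus a lower-order contribution.

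\medskip

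The main obstacle I anticipate is making rigorous the cancellation in the translation step, since it genuinely involves subtracting two divergent integrals. The clean way around this is to keep everything inside a large ball $B_R$, use Lemma \ref{lemma:F}(2) to express each $F$ as the limit of $J(v_{\cdot,R},B_R) - J(v,B_R)$, and carry out all comparisons at finite $R$ where the energies are honestly finite, only sending $R\to\infty$ at the end. The error from truncation must be estimated by constructing an explicit admissible function on the truncated domain that agrees with the translated harmonic competitor away from the trapezoid's slanted edges and interpolates to $v = x_2^+$ across them; the added Dirichlet energy and added positivity area near each slanted edge are then bounded by comparison with a linear (triangular) profile of height $(1-t)$, exactly as in Lemma \ref{lemma:initial}, which is why the bound is quadratic in $(1-t)$ and why $a_0, b_0$ must be taken large enough that these edge regions do not interact with the bulk of $E_t$.

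\medskip

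In summary, my steps in order would be: (1) apply Lemma \ref{lemma:F}(1) to dispense with the scaling; (2) work at finite radius $R$ via Lemma \ref{lemma:F}(2) to keep all energies finite; (3) establish that the infinite downward translation preserves the value $t^2 F(E)$ up to a controlled boundary contribution, using the volume-versus-integral balance; (4) build an explicit competitor realizing the truncation $T_{t,a,b}$ and bound the extra energy and extra area it costs near the two slanted sides by $O((1-t)^2)$, choosing $a_0 > b_0$ large enough that the truncation edges are disjoint from the essential support of $E_t$; and (5) collect the estimates and pass to the limit $R \to \infty$ to conclude $F(E_{t,a,b}) \leq t^2 F(E) + 3(1-t)^2$. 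The heart of the argument is step (4), since that is where the geometry of the trapezoid and the quadratic dependence on $(1-t)$ are pinned down.
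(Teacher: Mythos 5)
Your outline reproduces the skeleton of the paper's argument (scaling is free, translation should cancel, truncation costs $O((1-t)^2)$ via the triangle of Lemma \ref{lemma:initial}), but the decisive step --- your step (3) --- is asserted rather than proved, and retreating to finite $R$ does not by itself supply it. Saying the translation ``preserves the value $t^2F(E)$ up to a controlled boundary contribution, using the volume-versus-integral balance'' names the conclusion, not a mechanism: the balance is exactly the $\infty-\infty$ cancellation that has to be justified. The paper does it by first decomposing
$F(E_{t,a,b}) = |tE| - \int_{-b}^{b}\left(u_{E_{t,a,b}}(x_1,0)-(1-t)\right)dx_1 + \left[(a-b)(1-t) - 2\int_b^a u_{E_{t,a,b}}(x_1,0)\,dx_1\right]$
(using the symmetry hypothesis on $E$, which your outline never invokes, to fold $\int_{b<|x_1|<a}$ into $2\int_b^a$), and then by two concrete harmonic-function facts: (i) as $a\to\8$ with $b$ fixed, $u_{E_{t,a,b}}-(1-t)$ converges locally uniformly to $\tilde u_t = u_{tE}(\cdot+(1-t)e_2)$, because the domains $E_{t,a,b}\cup\bar\R^2_+$ converge locally in Hausdorff distance to the translated domain; and (ii) the Poisson-kernel identity $\int_\R u_{tE}(x_1,1-t)\,dx_1 = \int_\R u_{tE}(x_1,0)\,dx_1$ (the kernel has unit mass in $x_1$), which is the precise form of the ``area added equals integral gained'' heuristic. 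Nothing in your finite-$R$ scheme replaces (i) and (ii): at finite $R$ you must still compare the truncated, translated harmonic function to $u_{tE}$, and that comparison is the whole content of the lemma. Relatedly, your reason for taking $a_0,b_0$ large (``truncation edges disjoint from the essential support of $E_t$'') is off: the strip part of $E_t$ is unbounded, so the slanted edges always cut it; $b_0$ is large so that the translated $tE$ fits under the bottom of the trapezoid \emph{and} so that the tail $\int_{|x_1|>b} u_{tE}(x_1,1-t)\,dx_1$ is below $\e$, and then $a$ is chosen large depending on $b$ for the locally uniform convergence on $[-b,b]$.

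On step (4), the paper does not build an interpolating competitor across the slanted edges; it uses the comparison principle instead. With $c=(1-t)^{-1}(a-b)$ one has $(1-t)A_c + be_1 \ss E_{t,a,b}$, hence $u_{(1-t)A_c}(\cdot-be_1)\leq u_{E_{t,a,b}}$, and therefore $(a-b)(1-t) - 2\int_b^a u_{E_{t,a,b}}(x_1,0)\,dx_1 \leq |(1-t)A_c| - \int_\R u_{(1-t)A_c}(x_1,0)\,dx_1 = F((1-t)A_c) = (1-t)^2 F(A_c) \leq 2(1-t)^2$, by Lemma \ref{lemma:initial} and the scaling of $F$. Your explicit-interpolation variant could probably be carried out (Lemma \ref{lemma:initial} is itself such a construction), but as stated it leaves a real task: a competitor glued near the edges changes the trace on $\{x_2=0\}$ globally through harmonicity, so you would have to show the edge modification does not pollute the bulk term $\int_{-b}^b$ --- exactly the interaction the monotonicity of $E\mapsto u_E$ sidesteps. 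Finally, the constant $3$ is not a soft absorption of geometric constants: it is $2$ from the single scaled triangle (counted once by symmetry) plus $1$ from fixing the convergence slack $2\e=(1-t)^2$, and your accounting does not pin either contribution down.
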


\begin{remark}
In the previous Lemma the optimal choice of $t$ in order to minimize the upper bound for $F(E_{t,a,b})$ is
\begin{align*}
t = \frac{3}{3+F(E)}
\end{align*}
for which
\begin{align*}
F(E_{t,a,b}) \leq \frac{3F(E)}{3+F(E)}.
\end{align*}
\end{remark}

\begin{proof}
We rewrite $F(E_{t,a,b})$ in the following way,
\begin{align*}
F(E_{t,a,b}) &= |tE| + (a+b)(1-t) - \int_\R u_{E_{t,a,b}}(x_1,0)dx_1,\\
&= |tE| - \int_{-b}^b\1u_{E_{t,a,b}}(x_1,0)-(1-t)\2dx_1,\\
&{} + \1(a-b)(1-t) - 2\int_b^a u_{E_{t,a,b}}(x_1,0)dx_1\2.
\end{align*}

Now we compare $(u_{E_{t,a,b}}-(1-t))$ with $\tilde u_t = u_{tE}(\cdot+(1-t)e_2)$ in order to include $F(tE) = t^2F(E)$ in the right hand side. $\tilde u_t$ satisfies,
\begin{alignat*}{2}
\Delta \tilde u_t &= 0 &&\text{ in } \1tE \cup \bar\R^2_+\2 - (1-t)e_2,\\
\tilde u_t &= x_2^- - (1-t) &&\text{ in } \R^2 \sm \1\1tE \cup \bar\R^2_+\2 - (1-t)e_2\2.
\end{alignat*}
Similarly $(u_{E_{t,a,b}}-(1-t))$ satisfies,
\begin{alignat*}{2}
\Delta (u_{E_{t,a,b}}-(1-t)) &= 0 &&\text{ in } E_{t,a,b}\cup \bar\R^2_+,\\
(u_{E_{t,a,b}}-(1-t)) &= x_2^- - (1-t) &&\text{ in } \R^2 \sm \1E_{t,a,b}\cup \bar\R^2_+\2.
\end{alignat*}
Notice that sending $a\to\8$ makes the domain $E_{t,a,b}\cup \bar\R^2_+$ to approach the domain $\1tE \cup \bar\R^2_+\2 - (1-t)e_2$ locally with respect to the Hausdorff distance. This implies that as $a\to\8$ we have that $(u_{E_{t,a,b}}-(1-t))\to\tilde u_t$ locally uniformly. Given $\e>0$, there is some $a$ sufficiently large such that,
\begin{align*}
\int_{-b}^b\1u_{E_{t,a,b}}(x_1,0)-(1-t)\2dx_1 &\leq \int_{-b}^b\tilde u_t(x_1,0)dx_1 + \e,\\
&= \int_{-b}^b u_{tE}(x_1,1-t)dx_1 + \e,
\end{align*}
We can then chose $b$ sufficiently large such that, by using the Poison kernel of the half plane,
\begin{align*}
\int_{-b}^b\1u_{E_{t,a,b}}(x_1,0)-(1-t)\2dx_1 &\leq \int_{\R}u_{tE}(x_1,1-t)dx_1 + 2\e,\\
&= \int_{\R} u_{tE}(x_1,0)dx_1 + 2\e.
\end{align*}
Giving us the following comparison between $F(E_{t,a,b})$ and $F(tE)$ for $a$ and $b$ sufficiently large,
\begin{align*}
F(E_{t,a,b}) &\leq F(tE) + 2\e + \1(a-b)(1-t) - 2\int_b^a u_{E_{t,a,b}}(x_1,0)dx_1\2.
\end{align*}
We will se now that the last term is controlled by $2(1-t)^2$. Then we will set $2\e = (1-t)^2$ to conclude the Lemma.

Let $c = (1-t)^{-1}(a-b)$ and $u_{(1-t)A_c}$ where the triangle $A_c$ is the same from Lemma \ref{lemma:initial}. We have the inclusion $(1-t)A_c + be_1 \ss E_{t,a,b}$ which implies that $u_{(1-t)A_c}(\cdot - be_1) \leq u_{E_{t,a,b}}$ and then,
\begin{align*}
2\int_b^a u_{E_{t,a,b}}(x_1,0)dx_1 &\geq 2\int_b^a u_{(1-t)A_c}(x_1 - b,0)dx_1,\\
&= \int_{\R} u_{(1-t)A_c}(x_1,0)dx_1,\\
&= |(1-t)A_c| - F((1-t)A_c),\\
&= (1-t)(a-b) - 2(1-t)^2.
\end{align*}
Therefore,
\begin{align*}
(a-b)(1-t) - 2\int_b^a u_{E_{t,a,b}}(x_1,0)dx_1 \leq 2(1-t)^2.
\end{align*}
Which is what we were looking for.
\end{proof}


\subsection{Proof of Theorem \ref{thm:avoid_vertex}}

The following Lemma combined with the previous Lemma \ref{lemma:case1} will complete the proof of Theorem \ref{thm:avoid_vertex}.

\begin{lemma}
Given $l<2\pi$, there exits a set $E$ and a radius $R$ sufficiently large such that $J(v_{E,R},B_R\cap\W) < J(v,B_R\cap\W)$.
\end{lemma}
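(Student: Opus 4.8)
The plan is to build the required set $E$ by iterating the inductive step from Lemma \ref{lemma:inductive} starting from one of the isosceles triangles of Lemma \ref{lemma:initial}, thereby driving $F$ down to an arbitrarily small value while retaining a fixed, positive amount of mass outside $\W$. The logic is already foreshadowed by Remark \ref{rmk:strategy}: if I can produce a set $E$ with
\begin{align*}
F(E) - |E\cap(\R^2\sm\W)| < 0,
\end{align*}
then, taking $R$ large, the competitor $v_{E,R}$ beats $v$ on $B_R\cap\W$, which is exactly the claimed inequality.

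First I would observe that the set $\R^2\sm\W$ is a fixed cone of the form $\{x_2 < -\cot(l/2)|x_1|\}$ with a nonempty open interior below the $x_1$-axis (since $l<2\pi$ means the half-aperture $l/2 < \pi$, so $\cot(l/2)$ is finite and this is a genuine wedge). Hence any fixed set $E_0\ss\R^2_-$ that dips below the graph of $x_2=-\cot(l/2)|x_1|$ near the origin has $|E_0\cap(\R^2\sm\W)| =: m_0 > 0$. The key quantitative point is that the inductive step shrinks $F$ geometrically — by the Remark following Lemma \ref{lemma:inductive}, the optimal choice $t=3/(3+F(E))$ gives $F(E_{t,a,b}) \leq 3F(E)/(3+F(E))$, so iterating produces a sequence of sets $E_0, E_1, E_2,\dots$ with $F(E_n)\to 0$. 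I would start from $E_0 = A_c$ (so $F(E_0)\leq 2$ by Lemma \ref{lemma:initial}) and choose $c$ so that $A_c$ protrudes into the wedge $\R^2\sm\W$, fixing $m_0>0$.

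The main obstacle, and the step I would treat most carefully, is controlling the mass $|E_n\cap(\R^2\sm\W)|$ from below throughout the iteration — the shrinking of $F$ is worthless if the protruding mass shrinks to zero just as fast. The inductive construction rescales by $t\in(0,1)$, translates downward by $(1-t)$, and truncates by a trapezoid $T_{t,a,b}$ with $a,b$ taken large. The downward translation is favorable (it pushes mass deeper into the wedge), and the truncation only removes mass far out along the $x_1$-axis where $a,b$ are large; near the origin the set is unaffected. So I would argue that a fixed neighborhood of the origin inside $\R^2\sm\W$ stays contained in $E_n$ once the relevant parameters are chosen, giving a uniform lower bound $|E_n\cap(\R^2\sm\W)|\geq m_0 > 0$ independent of $n$. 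I must check this is compatible with keeping $E_n\subset\subset\R\times[-1,0]$ and Lipschitz, as required to reapply Lemma \ref{lemma:inductive}.

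Finally, I would choose $n$ large enough that $F(E_n) < m_0$, set $E = E_n$, and conclude $F(E) - |E\cap(\R^2\sm\W)| < m_0 - m_0 = 0$. Remark \ref{rmk:strategy} then yields an $R$ with
\begin{align*}
J(v_{E,R},B_R\cap\W) - J(v,B_R\cap\W) \leq F(E) - |E\cap(\R^2\sm\W)| + o(1) < 0
\end{align*}
for $R$ sufficiently large, which is the desired strict inequality. The one bookkeeping subtlety is that in Remark \ref{rmk:strategy} the inequality holds only in the limit $R\to\8$, so I would fix the strict gap first and then take $R$ large enough to absorb the error, rather than the other way around.
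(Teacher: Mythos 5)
Your proposal is correct and takes essentially the same route as the paper's proof: start from $E_0=A_c$ with $F(E_0)\leq 2$ (Lemma \ref{lemma:initial}), iterate Lemma \ref{lemma:inductive} with the optimal $t_k=3/(3+F_k)$ so that $F_k\to0$, retain a fixed mass $D\ss\R^2\sm\W$ inside the iterates, and conclude via Remark \ref{rmk:strategy}, fixing the strict gap before sending $R\to\8$. The paper settles your ``main obstacle'' exactly as you anticipate --- since $F_0\leq2$ uniformly in $c$, the number of iterations $k_0$ needed depends only on $l$, so $c$ may be chosen large at the outset to guarantee $D\ss E_{k_0}$ --- and your only slip is cosmetic: the recurrence $F_{k+1}\leq 3F_k/(3+F_k)$ yields $F_k\sim 3/k$, harmonic rather than geometric decay, which still suffices.
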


\begin{proof}
Let $A_c$ the isocales triangle described in Lemma \ref{lemma:initial}, let $D = A_c\cap(\R^2\sm\W)\cap\{x_2\geq-3/5\}$ and try to find $E$ such that:
\begin{enumerate}
 \item $D \ss E$,
 \item $F(E) < |D|$.
\end{enumerate}
By having this we use the Remark \ref{rmk:strategy} which says that
\begin{align*}
0 > F(E) - |E\cap(\R^2\sm\W)| \geq \lim_{R\to\8} \1J(v_{E,R},B_R\cap\W) - J(v,B_R\cap\W)\2
\end{align*}
and implies the Lemma.

Let $E_0 = A_c$, we know that,
\begin{enumerate}
 \item $D \ss E_0$
 \item $F(E_0)\leq 2$ from Lemma \ref{lemma:initial}. 
\end{enumerate}
Given $E_k$ let,
\begin{align*}
 F_k &= F(E_k),\\
 t_k &= \frac{3}{3+F_k},\\
 E_{k+1} &= (E_k)_{t_k,a_k,b_k}.
\end{align*}
with $a_k$ and $b_k$ sufficiently large such that Lemma \ref{lemma:inductive} applies and
\begin{align*}
F_{k+1} \leq \frac{3F_k}{3+F_k}.
\end{align*}
It is easy to show that such recurrence relation makes $F_k\to0$ as $k\to\8$. Eventually there will be some $k_0$ sufficiently large such that $F_{k_0} \leq |D|$. We now note that $F_{k_0} \leq |D|$ independently of how large $c$ was chosen in constructing $A_c$. $k_0$ will only depend on the length $l$ of the cone. Since we need to apply the iteration only $k_0$ times, we may choose $c$ large enough in the construction of $A_c=E_0$ so that 
\[
D \subset E_{k_0}
\]
Then we just have to chose $E = E_{k_0}$ to conclude the Lemma. 
\end{proof}


\subsection{Stability}

When we combine Theorem \ref{thm:avoid_vertex} with the stability given by Theorem \ref{thm:stability} we are able to say that the vertex not only is not in the free boundary but stays away from it a given distance.

\begin{corollary}
Let $l< 2\pi$ and $u$ be a minimizer of $J = J(\cC_1,\l_+,\l_-)$ with $\|Du\|_{L^2(\cC_1)} \leq 1$ then there exists some $\e = \e(l) > 0$ such that $\cC_\e \cap (\p\W^+\cup\p\W^-) = \emptyset$.
\end{corollary}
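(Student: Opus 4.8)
The plan is to argue by contradiction via compactness, reducing to Theorem \ref{thm:avoid_vertex} applied to a limiting minimizer. Fix $l < 2\pi$ and suppose the conclusion fails. Then for every $k$ there is a minimizer $u_k$ of $J = J(\cC_1,\l_+,\l_-)$ with $\|Du_k\|_{L^2(\cC_1)} \le 1$ admitting a free boundary point $x_k \in \cC_{1/k} \cap (\p\W_k^+\cup\p\W_k^-)$, where $\W_k^\pm$ denote the phases of $u_k$. In particular $|x_k| \to 0$, and since the $u_k$ are continuous and $x_k$ is a free boundary point, $u_k(x_k) = 0$. By symmetry (interchanging the roles of $\l_+$ and $\l_-$) we may pass to a subsequence for which $x_k \in \p\W_k^+$ for all $k$.

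First I would establish compactness. By Theorem \ref{thm:initial_reg} the $u_k$ are uniformly bounded in $C^{\a/4}(\cC_{1/10})$ for a fixed $\a \in (0,\min(1,2\pi/l))$; combined with $u_k(x_k)=0$ and $|x_k|\to0$ this also gives a uniform $L^\8$ bound on $\cC_{1/10}$. Using Arzel\`a--Ascoli together with the uniform $H^1$ bound, after extracting a subsequence $u_k \to u$ uniformly on $\cC_{1/10}$ and weakly in $H^1$. A standard energy comparison, built from the minimality of each $u_k$ and the uniform convergence (as in the proof of Theorem \ref{thm:stability}), shows $\int|Du_k|^2 \to \int|Du|^2$ on $\cC_{1/2}$, which upgrades the weak convergence to strong $H^1$ convergence. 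Theorem \ref{thm:stability} then applies and yields that $u$ is itself a minimizer on $\cC_{1/2}$ and that $\{u_k>0\}$ and $\{u_k<0\}$ converge to $\{u>0\}$ and $\{u<0\}$ in the Hausdorff distance.

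The crucial step is to transfer the free boundary point to the limit. The non-degeneracy of the positive phase supplies a universal $c>0$ with $\sup_{B_r(x_k)} u_k^+ \ge cr$ for all small $r$; since $|x_k|\to0$ and $u_k\to u$ uniformly, this forces $\sup_{B_{2r}(0)} u^+ \ge cr > 0$ for every small $r$, so $0 \in \overline{\W^+}$. On the other hand, $x_k \in \p\W_k^+$ means every neighborhood of $x_k$ meets $\{u_k\le 0\}$, and the complementary non-degeneracy together with the Hausdorff convergence of the level sets produces points arbitrarily close to $0$ lying outside $\W^+$. Hence $0 \in \p\W^+ \ss \p\W^+\cup\p\W^-$ for the limiting minimizer $u$. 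Since $u$ is a minimizer on $\cC_{1/2}$ with $l<2\pi$, rescaling to $\cC_1$ and invoking Theorem \ref{thm:avoid_vertex} gives $0 \notin \p\W^+\cup\p\W^-$, a contradiction; this produces the desired $\e = \e(l)>0$.

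The main obstacle is precisely the middle step: guaranteeing that the free boundary point genuinely survives in the limit rather than being absorbed into the interior of a phase or disappearing because the positive phase degenerates to measure zero near $0$. This is exactly where the two-sided non-degeneracy estimates underlying the stability results are indispensable, and where I would spend most of the care; the compactness and the final appeal to Theorem \ref{thm:avoid_vertex} are comparatively routine.
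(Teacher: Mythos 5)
Your proposal is correct and follows essentially the same route as the paper's own proof: a contradiction/compactness argument using the equicontinuity from Theorem \ref{thm:initial_reg} and Arzel\`a--Ascoli, the stability Theorem \ref{thm:stability} to conclude the limit is a minimizer with $0$ on its free boundary, and then Theorem \ref{thm:avoid_vertex} for the contradiction. The paper states this very tersely, while you fill in the two points it leaves implicit --- upgrading uniform convergence to the $H^1$ convergence that Theorem \ref{thm:stability} formally requires, and using non-degeneracy (equivalently, the Hausdorff convergence of the phases in part (2) of Theorem \ref{thm:stability}) to guarantee the free boundary point survives in the limit.
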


\begin{proof}
Proceed by contradiction assuming that there exists a sequence of minimizers $\{u_k\}_{k\in\N}$ with $\|Du\|_{L^2(\cC_1)} \leq 1$ such that for $\W^+ = \{u_k > 0\}\cap\cC_1$ and $\W^+ = \{u_k < 0\}\cap\cC_1$ we have that
\begin{align*}
\cC_{1/k} \cap (\p\W^+_k\cup\p\W^-_k) \neq \emptyset.
\end{align*}
By Theorem \ref{thm:initial_reg} the sequence is equicontinuous and also bounded therefore by Arzela-Ascoli it has a subsequence which converges uniformly to some function $u_0$ such that $0 \in (\p\W^+_0\cup\p\W^-_0)$, with $\W^\pm$ defined in a similar way. By the stability given \ref{thm:stability} we know that $u_0$ is a minimizer too but this contradicts Theorem \ref{thm:avoid_vertex}.
\end{proof}

\section{The vertex and the free boundary: Case $l \geq 2\pi$}

In this section we discuss the problem of determining whether the vertex may belong to the free boundary in the case $l \geq 2 \pi$. We show some examples when $0 \in (\p\W^+\cup\p\W^-)$ using the well known fact that when $l = 2 \pi$ and $\l_+ = 1,\l_-=0$ then $u=x^+_2$ is a minimizer of this type. Moreover it is the unique minimizer of $J(K)$ for any compact set $K\ss\R^2$ subject to its own boundary values.

\subsection{One phase free boundary through the vertex} \label{ss:pass}

Consider $l \geq 2 \pi$, $\l_+ = 1,\l_-=0$. The function $u=x^+_2$ defined in $\R^2\sm\{x_1=0,x_2<0\}$ can also be considered in $\cC$ by using an isometry $\phi:\R^2\sm\{x_1=0,x_2<0\}\to U \ss \cC$. Even though $\phi$ is not an isometry between $\R^2\sm\{x_1=0,x_2<0\}$ and $\cC_1$, we can consider $\tilde u = u\circ\phi:U \to \R$ and then extend it to $\cC$ by making it zero in $\cC\sm U$. We will drop now the tilde and consider $u=x^+_2$ defined in $\cC$. 

Let $v$ be a function on $\cC_1$ such that it has the same boundary values as $u$ in $\cC_1$ and minimizes $J(\cC_1)$. We will show that $v \equiv u$. In the quotient $\cR^2/\{\theta\in l\Z\}$ and after an appropriated rotation $u$ can be considered as $u(r,\theta) = r\cos\theta$. It satisfies that $u(r,\theta) = u(r,-\theta)$. Let now $\tilde v$ be defined by $\tilde v(r,\theta) = v(r,-\theta)$. Both functions $v$ and $\tilde v$ have the same boundary values a $u$ in $\cC_1$ and also $\tilde v$ minimizes $J(\cC_1)$. Consider now $v^+ = \max(v,\tilde v)$ and $v^- = \min(v,\tilde v)$. By the lattice principle Lemma \ref{lattice} 
both $v^\pm$ are minimizers of $J(\cC_1)$ with the same boundary data and symmetry as $u$. 

At this point we see that $v^{\pm}\circ\phi^{-1}$ also minimizes $J(B_1)$ with the same boundary values as $u=x^+_2$. The symmetry across $\{x_1=0\}$ implies that the Dirichlet term does not add to the functional if we include the segment $\{x_1=0,x_2\in(0,-1)\}$. However $u=x^+_2$ was the unique minimizer to that problem and therefore $v^{\pm} \equiv u$, so $v \equiv u$. Going back to $\cC$ we have found a minimizer with $0 \in (\p\W^+\cup\p\W^-)$.

\subsection{More than one positive phase free boundary through the vertex}

The previous idea can be extended to construct examples where two positive phases meet at the vertex. This is something unexpected since in the case when $l = 2\pi$ we know that the free boundary is smooth.

Consider $l \geq 4 \pi$, $\l_+ = 1,\l_-=0$, $\cC$ parametrized by $\cR^2/\{\theta\in l\Z\}$ and two isometries,
\begin{align*}
\phi_+:U^+ = \{\theta\in(-\pi,\pi)\} \to \R^2\sm\{x_2=0,x_1<0\},\\
\phi_-:U^- = \{\theta\in(l/2-\pi,l/2+\pi)\} \to \R^2\sm\{x_2=0,x_1>0\}
\end{align*}

In this case the two functions $u_\pm = x_1^\pm$ can be pasted together to construct a function $u = u^+\circ\phi_+ + u^-\circ\phi_-$ such that $u = u^\pm\circ\phi_\pm$ in $U^\pm$. We now consider a competitor $v$. If $v$ is a minimizer, we may use the lattice principle as before so that we may assume symmetry for $v$ across the lines that would be horizontal and vertical in Figure \ref{f:paste}. By cutting along the vertical line, we may use each half of $v$ as a competitor against $x_1^+$ on the cone $\tilde{\cC_1}$ which has half the lenth of the cone $\cC_1$. If $J(v) \leq J(u)$ on $\cC$, then necessarily each half must minimize, so $J(v) \leq J(x_1^+)$ on $\tilde{\cC_1}$. As shown in Section \ref{ss:pass} above, $x_1^+$ is the unique minimizer subject to its own boundary values on $\tilde{\cC_1}$, so we conclude each half of $v$ is identical to $x_1^+$. 

\begin{figure}[h]
  \includegraphics[width = 8cm]{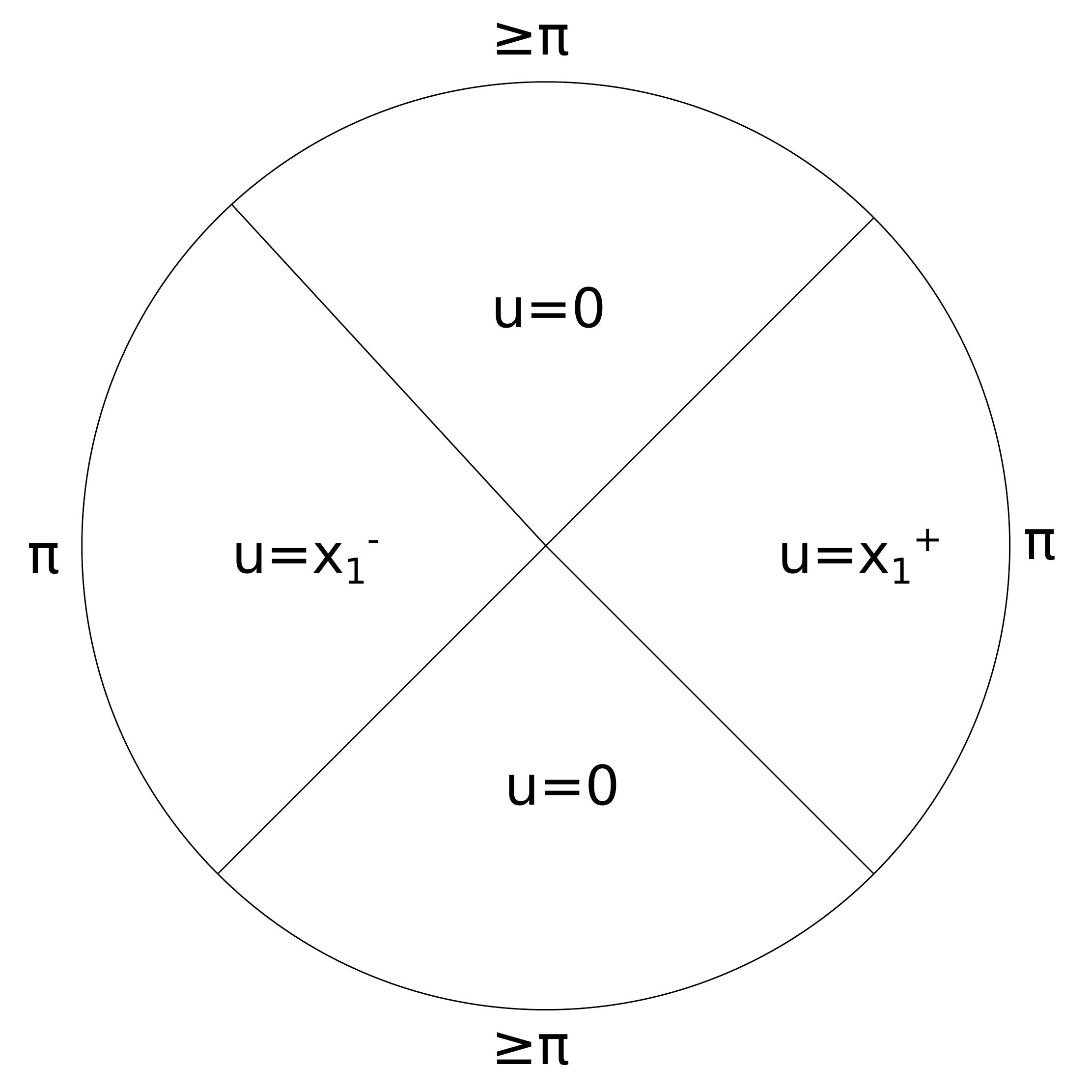}
  \caption{Pasting two linear pieces}
  \label{f:paste}
\end{figure}

This construction can also be generalized to show that $k$ phases can meet at the vertex if $l \geq 4k\pi$. 

\section{Appendix}


\subsection{Monotonicity formulas}

Monotonicity formulas for harmonic and subharmonic functions allow us to control infinitesimal quantities by integral ones. The classical monotonicity for the average of the Dirichlet energy of a harmonic function or the Alt-Caffarelli-Friedman (ACF) formula can be applied when the domain of integration doesn't contain the vertex. When we decide to center the integrals at the vertex then they are no longer valid and the classical proofs have to be slightly modified.

Given $r>0$, we fix $\cC_r$ to be the intersection of $\cC$ with the ball of radius $r$ centered at the origin.


\begin{lemma}[Monotonicity of the average Dirichlet energy]\label{lemma:dirichlet_monotonicity}
Let $u$ be a harmonic function over the cone $\cC$ with length $l$. Then
\begin{align*}
\frac{1}{r^{2\a}} D(\cC_r,u) = \frac{1}{r^{2\a}} \int_{\cC_r}|Du|^2
\end{align*}
is an increasing function of $r$ for $\a \in (0,2\pi/l]$
\end{lemma}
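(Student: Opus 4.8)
The plan is to use the explicit Fourier series representation of harmonic functions on the cone provided in Proposition \ref{prop:harmonic_functions} and compute the Dirichlet energy $D(\cC_r,u)$ directly. Writing
\begin{align*}
u(\rho,\theta) = \sum_{k=0}^{\infty} \rho^{2\pi k/l}\left(a_k\cos\tfrac{2\pi k}{l}\theta + b_k\sin\tfrac{2\pi k}{l}\theta\right),
\end{align*}
I would compute $|Du|^2 = (\partial_\rho u)^2 + \rho^{-2}(\partial_\theta u)^2$ in polar coordinates and integrate over $\cC_r$, i.e. over $\rho\in(0,r)$ and $\theta$ ranging over one fundamental domain of length $l$. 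The orthogonality of the trigonometric functions $\cos\tfrac{2\pi k}{l}\theta$ and $\sin\tfrac{2\pi k}{l}\theta$ over an interval of length $l$ kills all cross terms, so the energy decouples into a sum over $k$. Each mode contributes a term proportional to $(a_k^2+b_k^2)$ times a power of $r$; tracking the exponents, the radial integration of $\rho^{2(2\pi k/l)-1}$ over $(0,r)$ yields a factor $r^{2(2\pi k/l)}$ up to constants, so that
\begin{align*}
D(\cC_r,u) = \sum_{k=1}^{\infty} c_k (a_k^2+b_k^2)\, r^{4\pi k/l}
\end{align*}
for positive constants $c_k$ (the $k=0$ term carries no energy).

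Once this identity is in hand, the monotonicity is immediate. Dividing by $r^{2\a}$ gives
\begin{align*}
\frac{1}{r^{2\a}}D(\cC_r,u) = \sum_{k=1}^{\infty} c_k(a_k^2+b_k^2)\, r^{4\pi k/l - 2\a},
\end{align*}
and each exponent $4\pi k/l - 2\a$ is nonnegative precisely when $\a \leq 2\pi k/l$. Since the smallest nontrivial mode is $k=1$, the constraint $\a \leq 2\pi/l$ guarantees every exponent is nonnegative, so each summand is nondecreasing in $r$ and hence so is the whole sum. This pins down exactly why the threshold $2\pi/l$ appears: it is the homogeneity of the first nonconstant harmonic mode on the cone.

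The main technical obstacle I expect is justifying the termwise integration and differentiation of the series, particularly the interchange of the infinite sum with the integral defining $D(\cC_r,u)$, and ensuring convergence is uniform on compact subsets $\rho\leq r < R$ where the harmonicity of $u$ is assumed. For $u$ harmonic on a neighborhood of $\overline{\cC_r}$ the coefficients decay fast enough that this is standard, but it must be stated carefully because the series is only claimed to converge to a harmonic function and the behavior at the vertex $\rho=0$ is delicate when $l>2\pi$ (the modes with $k$ small but $2\pi k/l<1$ are merely H\"older at the origin). I would handle this by first establishing the energy identity on an annulus $\cC_r\setminus\cC_\delta$, where everything is smooth and the interchange is unproblematic, and then passing to the limit $\delta\to 0$ using that each radial factor $r^{4\pi k/l}$ vanishes at $\rho=0$, so no boundary contribution survives at the vertex.
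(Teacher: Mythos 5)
Your proof is correct and follows essentially the same route as the paper: both expand $u$ in the Fourier series of Proposition \ref{prop:harmonic_functions}, use orthogonality of the trigonometric modes over a period of length $l$ to reduce the rescaled energy to the series $\pi\sum_{k\geq 1} k\,(a_k^2+b_k^2)\,r^{4\pi k/l-2\alpha}$, and observe that every exponent is nonnegative precisely when $\alpha \leq 2\pi/l$. The only cosmetic difference is that the paper first integrates by parts to write $D(\cC_r,u)$ as the boundary integral $\int_{\p\cC_r} u\,u_r$ before expanding, whereas you expand the bulk Dirichlet integral directly (and you additionally supply the annulus-exhaustion justification near the vertex that the paper leaves implicit).
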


\begin{proof}
Integrating by parts,
\begin{align*}
\frac{1}{r^{2\a}} D(\cC_r) = \frac{1}{r^{2\a}} \int_{\p\cC_r}uu_rd(r\theta).
\end{align*}
Now we use the Fourier representation of $u$ and the fact that the sequence of functions given by the sines and cosines are and orthogonal set in $L^2(\cC_r)$. Let
\begin{align*}
u = \sum_{k=0}^{\infty}{r^{2 \pi k/l} \left(a_k \cos \frac{2 \pi k}{l} \theta 
                                                \ + \ b_k \sin \frac{2 \pi k}{l} \theta \right) },
\end{align*}
then
\begin{align*}
\frac{1}{r^{2\a}}\int_{\p\cC_r}uu_r &= \sum_{k=0}^{\infty} (2\pi k/l) r^{4 \pi k/l-2\a} \int_0^l a_k^2 \cos^2 \frac{2 \pi k}{l} \theta + b_k^2 \sin^2 \frac{2 \pi k}{l} \theta d\theta,\\
&= \pi\sum_{k=1}^{\infty} kr^{4 \pi k/l-2\a} (a_k^2+b_k^2).
\end{align*}
As $\a \in (0,2\pi/l]$, the exponents appearing on the sum above are all non negative, each term the is non decreasing in $r$ and the whole series is therefore non decreasing in $r$.
\end{proof}

\begin{remark}
At any other point $x_0 \neq 0$ we can also define the ball $B_r(x_0) \ss \cR^2$. As far as $r \leq |x_0|$ this ball looks exactly as the flat ball we are use to. In that case the monotonicity proof given above works with any exponent $2\a \in (0,1]$. Therefore we obtain for $\a \in (0,\min(1,2\pi/l)]$,
\begin{align*}
\frac{1}{r^\a}D(B_r(x_0),u) = \frac{1}{r^{2\a}}\int_{B_r(x_0)}|Du|^2,
\end{align*}
is also increasing with the restriction that $r \leq |x_0|$ if $x_0\neq 0$. 
\end{remark}


\begin{lemma}[Alt-Caffarelli-Friedman monotonicity formula] \label{lemma:ACF}
Let $\{u_+, u_-\}$ be a pair of nonnegative continuous subharmonic functions on the cone $\C_1$ with length $l \leq 2 \pi$ such that $ u_+ \cdot u_- =0$ in $\C_1$ and $\a \in (0,4\pi/l]$. Then the functional
\begin{align*}
r \mapsto \Phi(r,u_+,u_-) = \frac{1}{r^{2\a}} \int_{\C_r}{|D u_+|^2} \int_{\C_r}{|D u_-|^2}
\end{align*}
is nondecreasing for $0<r<1$.
\end{lemma}

\begin{proof}
As in the classical proof we have that,
\begin{align*}
\frac{r\Phi'(r)}{2\Phi(r)} &= -\a + \frac{\int_{\p \cC_1}|Du^+|^2}{2\int_{C_1}|Du^+|^2} + \frac{\int_{\p C_1}|Du^-|^2}{2\int_{C_1}|Du^-|^2},\\
&\geq -\a + \1\frac{\int_0^l(\tilde u^+_\theta)^2d\theta}{\int_0^l(\tilde u^+)^2d\theta}\2^{1/2} + \1\frac{\int_0^l(\tilde u^+_\theta)^2d\theta}{\int_0^l(\tilde u^-)^2d\theta}\2^{1/2}.
\end{align*}
The last two terms get minimized by the first eigenvalues of the support of $u^\pm$. They become even smaller if we assume that each one of these two domains are connected and have complementary lengths $m$ and $l-m$. In that case the eigenvalues are $-(\pi/m)^2$ and $-(\pi/(l-m))^2$. So the expression above gets minimized when $m=l/2$ and then,
\begin{align*}
\frac{r\Phi'(r)}{2\Phi(r)} &\geq -\a + 4\pi/l,
\end{align*}
which is non negative for $\a \in (0,4\pi/l]$.
\end{proof}


\subsection{Initial regularity}

We can get some regularity for the minimizer $u$ by just comparing it with its harmonic replacement in a given ball.

\begin{lemma}
Let $u$ be a minimizer of $J = J(\cC_1,\l_+,\l_-)$ with $\|Du\|_{L^2(\cC_1)} \leq 1$, then
\begin{align*}
|u(x) - u(0)| \leq C|x|^{\a/3} \text{ for every $x \in \cC_{1/10}$},
\end{align*}
for any $\a \in (0,\min(1,2\pi/l))$ and some universal $C = C(\a) > 0$.
\end{lemma}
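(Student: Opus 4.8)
The plan is to compare $u$ with its harmonic replacement on balls centered at the vertex, turn the comparison into a decay estimate for the Dirichlet energy $\omega(\rho):=\int_{\cC_\rho}|Du|^2$, and finally upgrade the energy decay into the pointwise modulus of continuity at $0$ by a Morrey type argument. Throughout I take $0<\rho\le r\le 1/2$, and the normalization $\|Du\|_{L^2(\cC_1)}\le 1$ enters only as $\omega(1/2)\le 1$, the base of the iteration below.

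First I would record the basic comparison estimate. Let $h_r$ be the harmonic replacement of $u$ in $\cC_r$, i.e. the harmonic function on $\cC_r$ with $h_r=u$ on $\p\cC_r$; its existence and the validity of Green's formula across the vertex were noted in Section~\ref{sec:Preliminaries}. Since $u$ minimizes $J$ and $h_r$ has the same boundary data, $J(u,\cC_r)\le J(h_r,\cC_r)$. The two measure terms differ by at most $(\l_++\l_-)|\cC_r|\le Cr^2$, and because $u-h_r\in H^1_0(\cC_r)$ is orthogonal, in the Dirichlet inner product, to the harmonic $h_r$, we have $\int_{\cC_r}|Du|^2-\int_{\cC_r}|Dh_r|^2=\int_{\cC_r}|D(u-h_r)|^2$. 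Hence
\[
\int_{\cC_r}|D(u-h_r)|^2\ \le\ Cr^2 .
\]

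Next I would produce the energy decay. Applying Lemma~\ref{lemma:dirichlet_monotonicity} to the harmonic function $h_r$, which is legitimate since $\a\le 2\pi/l$, gives for $\rho\le r$ that $\int_{\cC_\rho}|Dh_r|^2\le \1\rho/r\2^{2\a}\int_{\cC_r}|Dh_r|^2\le \1\rho/r\2^{2\a}\omega(r)$. Combining this with the comparison estimate and the $L^2$ triangle inequality yields the one step inequality
\[
\omega(\rho)\ \le\ 2\1\frac{\rho}{r}\2^{2\a}\omega(r)+Cr^2 .
\]
Because $\a<1$, the error exponent $2$ strictly exceeds the harmonic decay rate $2\a$, so the source term is negligible and a standard dyadic iteration of this inequality, fixing the ratio $\rho/r$ appropriately, produces $\omega(\rho)\le C\rho^{2\beta}$ for any prescribed $\beta<\a$. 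I would simply take $\beta=\a/3$, which leaves ample room to absorb the crude constant $2$.

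Finally I would convert the decay $\omega(\rho)\le C\rho^{2\a/3}$ into the stated bound. For the value at the vertex, let $m(\rho):=\frac{1}{l\rho}\int_{\p\cC_\rho}u\,ds$ be the circular average. On each flat dyadic annulus $\cC_{2\rho}\sm\cC_\rho$, a trace inequality followed by Poincar\'e, with scale invariant constants in this fixed geometry, gives $|m(2\rho)-m(\rho)|\le C\,\omega(2\rho)^{1/2}\le C\rho^{\a/3}$; summing the resulting geometric series shows that $m(\rho)$ converges to a limit, which is the value $u(0)$ of the continuous representative, and that $|m(\rho)-u(0)|\le C\rho^{\a/3}$. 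For a point $x$ with $|x|=\rho$ the disc $B_{\rho/2}(x)$ avoids the vertex and lies in the flat region $\cC_{2\rho}\sm\{0\}$, so Morrey's Dirichlet growth theorem applied on $B_{\rho/2}(x)$, together with $\int_{B_{\rho/2}(x)}|Du|^2\le\omega(2\rho)\le C\rho^{2\a/3}$, gives $|u(x)-m(\rho)|\le C\rho^{\a/3}$. Adding the two estimates yields $|u(x)-u(0)|\le C|x|^{\a/3}$. The harmonic comparison and the iteration are routine; the delicate point is this last step at the vertex. Since there is no tangent plane at $0$, one cannot invoke an interior Morrey estimate there directly, and the modulus of continuity at the vertex must instead be extracted from the energy decay of the vertex centered balls $\cC_\rho$ by the telescoping of circular averages, together with the verification that their common limit is the continuous value assigned to $u$ at $0$. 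Away from the vertex the surface is flat and every estimate used is the classical one.
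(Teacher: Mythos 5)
Your first half coincides with the paper's own proof: comparing $u$ with its harmonic replacement on vertex-centered balls and invoking Lemma~\ref{lemma:dirichlet_monotonicity} is exactly how the paper derives the energy decay \eqref{eq:Morrey1}; note that the standard iteration lemma gives the full exponent $\int_{\cC_\rho}|Du|^2\le C\rho^{2\alpha}$ (since $2\alpha<2$ there is no loss), so your deliberate weakening to $2\alpha/3$ is unnecessary. Your telescoping of the circular means $m(\rho)$, with $|m(2\rho)-m(\rho)|\le C\omega(2\rho)^{1/2}$ by Cauchy--Schwarz on the radial derivative over the annulus, is also correct and is a legitimate substitute for the paper's ray estimate \eqref{eq:Morrey2} insofar as it defines and controls the value at the vertex.

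The genuine gap is the off-vertex step. The claim that the single bound $\int_{B_{\rho/2}(x)}|Du|^2\le C\rho^{2\alpha/3}$ together with Morrey's Dirichlet growth theorem yields $|u(x)-m(\rho)|\le C\rho^{\alpha/3}$ fails as stated, for two reasons. First, Morrey's theorem requires $\int_{B_s(y)}|Du|^2\le Ms^{2\lambda}$ for \emph{all} sub-balls $B_s(y)$, not one top-scale bound: in two dimensions a one-ball energy bound controls no oscillation at all (truncated copies of $\log\log(1/|y-y_0|)$ have arbitrarily small Dirichlet energy and arbitrarily large oscillation), and you never establish decay at sub-scales around off-vertex centers. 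Second, $m(\rho)$ averages over the whole circle $\p\cC_\rho$, of length $l\rho$, which is not contained in $B_{\rho/2}(x)$, so you would still need to chain oscillation estimates around the circle (and for small $l$ the ball $B_{\rho/2}(x)$ can wrap around the cone and is then not isometric to a flat disc). Both defects are repairable: repeating your comparison argument at centers $y\neq0$, now with the flat monotonicity from the remark following Lemma~\ref{lemma:dirichlet_monotonicity}, gives $\int_{B_s(y)}|Du|^2\le Cs^{2\alpha}$ for $s\le|y|/4$, after which Morrey plus a finite chain of balls along $\p\cC_\rho$ legitimately gives $|u(x)-m(|x|)|\le C|x|^{\alpha}$ --- in fact with the better exponent $\alpha$ rather than $\alpha/3$. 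The paper instead avoids interior Morrey estimates entirely: it proves the ray-integrated bound $\int_{\cC_R}|u(y)-u(0)|/|y|\,dy\le CR^{1+\alpha}$ and then interpolates it against the degenerate Lipschitz estimate $|Du|\le Cr^{-1}$ of Proposition~\ref{prop:Lipschitz_degenerate} by averaging over a small ball $B_{\e r}(x)$ and choosing $\e=r^{\alpha/3}$; that interpolation --- not the energy iteration --- is the true source of the exponent $\alpha/3$, whereas your $\alpha/3$ was an ad hoc weakening that masks, but does not close, the missing sub-scale decay.
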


\begin{proof}
We prove first that for every $x_0 \in B_{1/2}$, $r\in(0,1/2)$ and $\a \in (0,\min(1,2\pi/l))$,
\begin{align}\label{eq:Morrey1}
\int_{\cC_r}|Du|^2 \leq Cr^{2\a}.
\end{align}
Consider $0<r<R<1/2$ and $h_R$ the harmonic function in $\cC_R$ taking the same boundary values as $u$ in $\p\cC_R$. Then $v$ is an admissible competitor for $J$ against $u$ in $\cC_R$ from where we get,
\begin{align*}
\int_{\cC_R} |Du|^2 - |Dh_R|^2 \leq CR^2.
\end{align*}
Because $h_R$ is harmonic and $u-h_R \in H^1_0(\cC_R)$,
\begin{align*}
\int_{\cC_R} |D(u-h_R)|^2 = \int_{\cC_R} |Du|^2 - |Dh_R|^2 \leq CR^2.
\end{align*}
Now we estimate how much $D(u)$ grows from $r$ to $R$ from Lemma \ref{lemma:dirichlet_monotonicity} and the fact that $h_R$ minimizes the Dirichlet energy in $\cC_R$,
\begin{align*}
\int_{\cC_r}|Du|^2 &\leq \int_{\cC_R}|D(u-h_R)|^2 + \int_{\cC_r}|Dh_R|^2,\\
&\leq CR^2 + \1\frac{R}{r}\2^{4\pi/l}\int_{\cC_R}|Dh_R|^2,\\
&\leq CR^2 + \1\frac{R}{r}\2^{4\pi/l}\int_{\cC_R}|Du|^2.
\end{align*}
From this we conclude \eqref{eq:Morrey1} by applying Lemma 3.4 in \cite{Fanghua11}.

Now we proof, in a similar way as in the Morrey estimates, that for $R \in (0,1/2)$,
\begin{align}\label{eq:Morrey2}
\int_{\cC_R} \frac{|u(y)-u(0)|}{|y|}dy \leq CR^{1+\a}.
\end{align}
The following computations can be made rigorous after regularizing $u$ by a convolution. We obtain the slope of $u$ between $0$ and $y$ by performing the following integral,
\begin{align*}
\frac{|u(y)-u(0)|}{|y|} &\leq \int_0^1|Du(ty)|dt.
\end{align*}
Next we integrate in $\p\cC_{r}$,
\begin{align*}
\int_{\p\cC_r}\frac{|u(y)-u(0)|}{|y|} &\leq \int_0^1 \frac{dt}{t} \int_{\p\cC_{tr}}|Du|.
\end{align*}
Finally we integrate with respect to $r$ between $0$ and $R$, apply H\"older's inequality and the previous estimate \eqref{eq:Morrey1},
\begin{align*}
\int_{\cC_R} \frac{|u(y)-u(0)|}{|y|} &\leq \int_0^1 R^2dt \frac{1}{(tR)^2}\int_{\cC_{tR}} |Du|,\\
&\leq \int_0^1 R^2dt \1\frac{1}{(tR)^2}\int_{\cC_{tR}} |Du|^2\2^{1/2},\\
&\leq CR^{1+\a}\int_0^1 t^{-1+\a}dt.
\end{align*}
As $\a>0$ the integral above is finite and we conclude \eqref{eq:Morrey2}.

Finally we use \eqref{eq:Morrey2} to compare $u(0)$ with $u(x)$ with $x \in \cC_{1/10}$. Consider a parameter $\e\in(0,1/2)$ to be fixed and $r = \e|x|$, we apply first the triangular inequality and then integrate over $B_{\e r}(x)$ using Proposition \ref{prop:Lipschitz_degenerate},
\begin{align*}
|u(x) - u(0)||B_{\e r}(x)| &\leq \int_{B_{\e r}(x)}|u(y)-u(x)|dy + \int_{\cC_{2r}}|u(y)-u(0)|dy,\\
&\leq Cr^{-1}\int_{B_{\e r}(x)}|x-y|dy + 2r\int_{\cC_{2r}} \frac{|u(y)-u(0)|}{|y|}dy,\\
&\leq C\1\e^3 r^2 + r^{2+\a}\2,
\end{align*}
It implies that $|u(x) - u(0)| \leq C(\e + r^\a\e^{-2})$, then we just chose $\e = r^{\a/3} (\leq 10^{-1/3} < 1/2)$ to conclude the proof.
\end{proof}

Here is the proof of Theorem \ref{thm:initial_reg}. Notice that the estimate degenerates in two ways, as $l$ grows and also as the H\"older exponent goes to $\min(1,2\pi/l)$.

\begin{proof}[Proof of Theorem \ref{thm:initial_reg}]
Let $x,y \in \cC_{1/10}$ and assume without lost of generality that $y$ is closest one to $0$. Let $\e\in(0,1/2)$ a parameter to be fixed and $r = |x|$. We consider two cases according if $y$ belongs or not to $B_{\e r}(x)$.

If $y \in B_{\e r}(x)$. Then we use the Lipschitz estimate from Proposition \ref{prop:Lipschitz_degenerate} to get
\begin{align*}
|u(x)-u(y)| \leq Cr^{-1}|x-y|.
\end{align*}
Given that $\e \leq r^{\a/(4-\a)} (\leq 10^{-1/3} < 1/2)$ we obtain that $r^{-1}|x-y| \leq |x-y|^{\a/4}$.

If $y \notin B_{\e r}(x)$ then we use the previous Lemma to get that
\begin{align*}
|u(x)-u(y)| &\leq |u(x)-u(0)| + |u(y) - u(0)|,\\
&\leq Cr^{\a/3}.
\end{align*}
Given that $\e = r^{1/3} (\leq r^{\a/(4-\a)})$ we obtain $r^{\a/3} \leq (\e r)^{\a/4} \leq |x-y|^{\a/4}$.
\end{proof}


\subsection{Stability}

We start by proving a non degeneracy estimate at the vertex. As we have done before we will use the already known results for the flat metric case when the problem is considered away from the origin. At the origin we will a 
non degeneracy result. The following Lattice Principle will be used to obtain non degeneracy. 


\begin{lemma}[Lattice Principle]  \label{lattice}
Let $u,v$ be two minimizers on $\cC_r$ with $u \leq v$ on $\partial \cC_r$. Then 
$\underline{w} = \min \{u,v\}$ and $\overline{w} = \max \{u,v\}$ are minimizers on $\cC_r$ subject to their respective boundary conditions. 
\end{lemma}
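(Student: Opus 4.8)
The plan is to reduce everything to the single \emph{balancing identity}
\begin{align*}
J(\underline{w},\cC_r) + J(\overline{w},\cC_r) = J(u,\cC_r) + J(v,\cC_r),
\end{align*}
and then play it against the minimality of $u$ and $v$. First I would note the standard facts that $\underline{w}=\min\{u,v\}$ and $\overline{w}=\max\{u,v\}$ again belong to $H^1(\cC_r)$, and that because $u\le v$ on $\p\cC_r$ their traces satisfy $\underline{w}=u$ and $\overline{w}=v$ on $\p\cC_r$. Consequently $\underline{w}$ is an admissible competitor in the minimization problem solved by $u$ (same boundary data), and $\overline{w}$ is an admissible competitor in the one solved by $v$.

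To prove the identity I would split $J$ into its Dirichlet part and its bulk part. For the gradients, on the open set $\{u<v\}$ one has $\underline{w}=u$ and $\overline{w}=v$, on $\{u>v\}$ the roles swap, and on the coincidence set $\{u=v\}$ one has $Du=Dv=D\underline{w}=D\overline{w}$ almost everywhere by the classical fact that the gradient of an $H^1$ function agrees a.e. on a level set of the difference; hence $|D\underline{w}|^2+|D\overline{w}|^2=|Du|^2+|Dv|^2$ a.e. For the measure terms I would use $\{\overline{w}>0\}=\{u>0\}\cup\{v>0\}$, $\{\underline{w}>0\}=\{u>0\}\cap\{v>0\}$ and the analogous pair for the negativity sets, together with the elementary identity $\chi_{A\cup B}+\chi_{A\cap B}=\chi_A+\chi_B$, to conclude $\chi_{\{\underline{w}>0\}}+\chi_{\{\overline{w}>0\}}=\chi_{\{u>0\}}+\chi_{\{v>0\}}$ and likewise for the negative phase. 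Integrating the sum of these pointwise identities over $\cC_r$ gives the balancing identity.

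With the identity in hand the conclusion is immediate. Minimality of $u$ and $v$ gives $J(u,\cC_r)\le J(\underline{w},\cC_r)$ and $J(v,\cC_r)\le J(\overline{w},\cC_r)$; adding these and substituting the identity yields $J(u,\cC_r)+J(v,\cC_r)\le J(\underline{w},\cC_r)+J(\overline{w},\cC_r)=J(u,\cC_r)+J(v,\cC_r)$. Equality must therefore hold in each inequality, which says exactly that $\underline{w}$ realizes the minimal energy among competitors with the boundary data of $u$, and $\overline{w}$ realizes it among competitors with the boundary data of $v$; that is, both $\underline{w}$ and $\overline{w}$ are minimizers.

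The only genuinely delicate point is the a.e. gradient identity on $\{u=v\}$, on which both the Dirichlet and the bulk cancellations rest. On the cone this deserves a remark: since the metric on $\cC_r\sm\{0\}$ is flat, as recorded in Section \ref{sec:Preliminaries}, and the vertex is a single point of zero area, all the integrals coincide with their flat counterparts in $\cR^2/\{\theta\in l\Z\}$, where the standard Sobolev truncation lemma applies directly. Every other step is routine.
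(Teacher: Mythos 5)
Your proof is correct and follows exactly the paper's argument: the paper likewise establishes the balancing identity $J(\underline{w})+J(\overline{w})=J(u)+J(v)$ (stated there as ``one may easily check'') and then concludes from the boundary conditions and the minimality of $u$ and $v$. You have simply filled in the pointwise verification of the identity --- the gradient decomposition on $\{u<v\}$, $\{u>v\}$, $\{u=v\}$ and the characteristic-function identity $\chi_{A\cup B}+\chi_{A\cap B}=\chi_A+\chi_B$ --- which the paper leaves to the reader.
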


\begin{proof}
One may easily check that
\[
J(\underline{w})+ J(\overline{w}) =J(u)+ J(v)
\]
Since $\underline{w}=u$ and $\overline{w}=v$ on $\partial \cC_r$ it follows that $\underline{w}$ and $\overline{w}$ are minimizers of $J$. 
\end{proof}

\begin{lemma}[Non degeneracy at the vertex]
Let $u$ be a minimizer of $J = J(\cC_1,\l_+,\l_-)$ with $\l_+$ and $\l_-$ different from zero. For $\e\in(0,1)$ there exists some $\d > 0$ such that $|u|<\d$ in $\cC_\e$ implies $u=0$ in $\cC_{\e/2}$.
\end{lemma}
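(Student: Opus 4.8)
The plan is to prove the non degeneracy statement: for a minimizer $u$ with both $\l_\pm \neq 0$ and for any $\e \in (0,1)$, there exists $\d > 0$ so that $\sup_{\cC_\e}|u| < \d$ forces $u \equiv 0$ on $\cC_{\e/2}$. The strategy is the standard \emph{cut-off competitor against the cost of maintaining a small positive (or negative) phase}, adapted to the cone and in particular to the vertex. First I would argue by contradiction using the stability machinery already available: suppose no such $\d$ exists for some fixed $\e$. Then there is a sequence of minimizers $\{u_k\}$ with $\sup_{\cC_\e}|u_k| \to 0$ but $u_k \not\equiv 0$ on $\cC_{\e/2}$. Since the $u_k$ are uniformly bounded and, by Theorem~\ref{thm:initial_reg}, uniformly equicontinuous on $\cC_{\e/2}$, Arzel\`a--Ascoli gives a subsequence converging uniformly to some limit $u_0$ with $u_0 \equiv 0$ on $\cC_\e$.

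The heart of the matter is then to derive a contradiction from the fact that a nonzero minimizer cannot be arbitrarily small. The cleanest route is a direct competitor estimate rather than passing fully to the limit: for a single minimizer $u$ with $\sup_{\cC_\e}|u| = \d$ small, I would compare $u$ against the competitor $w$ obtained by setting $w = 0$ on $\cC_{\e/2}$, interpolating on the annulus $\cC_\e \sm \cC_{\e/2}$ (say linearly in the radial variable so that $w$ matches $u$ on $\p\cC_\e$ and vanishes on $\p\cC_{\e/2}$), and leaving $w = u$ outside $\cC_\e$. Minimality gives $J(u,\cC_\e) \leq J(w,\cC_\e)$, which upon cancelling the common exterior contributions yields
\begin{align*}
\int_{\cC_{\e/2}}|Du|^2 + \l_+|\{u>0\}\cap\cC_{\e/2}| + \l_-|\{u<0\}\cap\cC_{\e/2}| \leq \int_{\cC_\e\sm\cC_{\e/2}}\big(|Dw|^2 - |Du|^2\big) + C\d.
\end{align*}
The interpolation term is controlled by $\d$ because the annular oscillation of $u$ is at most $2\d$, so on the right one gets a bound of the form $C(\e)\d$ (using that the radial cut-off has gradient of size $\d/\e$ over an annulus of fixed measure). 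Hence $\l_+|\{u>0\}\cap\cC_{\e/2}| + \l_-|\{u<0\}\cap\cC_{\e/2}| \leq C(\e)\d$, so both phases occupy measure $O(\d)$ inside $\cC_{\e/2}$.

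The remaining and main obstacle is upgrading this smallness of measure into the exact statement $u \equiv 0$ on $\cC_{\e/2}$; smallness of the positivity set alone does not immediately kill the function. Here I would invoke a non degeneracy mechanism near the vertex: the key point is that with \emph{both} $\l_\pm \neq 0$, if $u$ were positive somewhere in $\cC_{\e/2}$ then by the interior non degeneracy inherited from the classical theory (the flat-metric estimates of \cite{ACF84} applied via $\phi_l$ away from $0$), the positive phase $\{u>0\}$ would have to carry density bounded below, forcing its measure to exceed a fixed fraction of $|\cC_{\e/2}|$ rather than being $O(\d)$; the same applies to $\{u<0\}$. The Lattice Principle (Lemma~\ref{lattice}) and the subharmonicity of $u^\pm$ (Corollary~\ref{coro:subharmonic}) are the tools to propagate positivity from a single interior point to a set of definite size. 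Choosing $\d$ small enough that $C(\e)\d$ is below this fixed fraction then forces $\{u>0\}\cap\cC_{\e/2} = \emptyset$ and likewise for the negative phase, giving $u \equiv 0$ on $\cC_{\e/2}$. The delicate step is making the density lower bound uniform \emph{up to the vertex}, since the classical non degeneracy degenerates as one approaches $0$; I expect this is handled by the scale-invariance of the functional $J$ on the cone (the pull-back $\tilde J$ is genuinely scale invariant), so that the density estimate at scale $\e/2$ is controlled by the same universal constant as at unit scale, and it is precisely this uniform control that closes the argument.
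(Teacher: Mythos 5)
Your opening reduction (the cut-off competitor $w=\eta u$, with $\eta$ radial, vanishing on $\cC_{\e/2}$ and equal to $1$ outside $\cC_\e$) is sound and does yield $\int_{\cC_{\e/2}}|Du|^2+\l_+|\{u>0\}\cap\cC_{\e/2}|+\l_-|\{u<0\}\cap\cC_{\e/2}|\leq C\d$, with $C$ depending on $\e$ and on the energy bound of $u$. The genuine gap is the closing step. The ``density lower bound uniform up to the vertex'' that you invoke is, in substance, the lemma you are trying to prove: the classical non-degeneracy and density estimates of \cite{ACF84} transfer to the cone only on flat balls $B_r(x_0)$ with $r\leq|x_0|$, so at a free boundary point at distance $s$ from the vertex they give $\sup_{B_s(x_0)}u^+\geq cs$ and $|\{u>0\}\cap B_s(x_0)|\geq cs^2$ --- not a fixed fraction of $|\cC_{\e/2}|$. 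Combined with $|u|<\d$ this only confines the free boundary to $\cC_{C\d}$, and your measure bound is perfectly consistent with a tiny one- or two-phase ``blob'' of diameter $O(\d)$ clinging to the vertex; none of your cited tools excludes it. Scale invariance of $J$ cannot manufacture the missing unit-scale vertex estimate: rescaling the blob to unit size via $u_s=s^{-1}u(s\,\cdot)$ with $s\sim\d$ destroys the smallness ($\sup|u_s|\sim1$), so the cut-off argument no longer gives smallness of measure and the iteration stalls. Likewise, subharmonicity of $u^+$ only yields $|\{u>0\}\cap B_r(x_0)|/|B_r|\geq u(x_0)/\d$, which carries no definite lower bound, and the lattice principle by itself produces no density estimate.

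The paper closes exactly this vertex case by a barrier comparison instead. Using the lattice principle (Lemma \ref{lattice}) and stability, it takes the \emph{maximal} minimizer $\varphi_\d$ with constant boundary data $\d$ on $\p\cC_\e$; rotation invariance forces $\varphi_\d$ to be radial, hence an explicit truncated-logarithm one-phase solution whose free boundary radius (the largest root of the compatibility relation $\l_+ r_0\ln(r_0/\e)=\d$) tends to $\e$ as $\d\to0$, so $\varphi_\d\equiv0$ on $\cC_{\e/2}$ for $\d$ small. Then $\max(u,\varphi_\d)$ is a minimizer with boundary data $\d$, hence lies below $\varphi_\d$ by maximality, giving $u\leq0$ on $\cC_{\e/2}$; the symmetric barrier, which requires $\l_-\neq0$, gives $u\geq0$. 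This avoids density estimates entirely, handles the vertex for every cone length $l$, and produces a $\d$ depending only on $\e$ and $\l_\pm$ --- a uniformity your argument lacks (your constants depend on the energy of $u$) and which the subsequent stability corollary actually needs. To repair your proof you would have to establish vertex non-degeneracy as a separate lemma, and the natural way to do that is precisely the radial barrier comparison above.
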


\begin{proof}
Given $\d>0$ we will consider a competitor $\varphi_\d$ which minimizes $J$ with constant boundary value $\d$ in $\p\cC_\e$. By Lemma \ref{lattice} and Theorem \ref{thm:stability} we may take the $\sup$ of all minimizers and conclude there is a unique minimizer $\varphi_\d$ that lies above every other minimizer with constant boundary value $\d$. Any rotation of $\varphi_\d$ is again a minimizer, and so $\varphi_\d$ is a radially symmetric minimizer $\varphi_\d$.  Given that $\d$ is sufficiently small one may easily compute that 
\begin{align*}
 \varphi_\d(x) = \l_+r_0\ln^+(r_0/|x|),
\end{align*}
where $r_0$ is the largest of the two roots of $\l_+r_0\ln(r_0/\e) = \d$. In particular $\varphi_\d$ vanishes in $B_{\e/2}$ if we chose $\d$ small enough.

Assuming that $|u|<\d$ by Lemma \ref{lattice} we have that $v=\max(\varphi_\d,u)$ is a minimizer over $\cC_\e$ and as stated above $v=\max(\varphi_\d,u) \leq \varphi_\d$. Then $u$ vanishes in $B_{\e/2}$.  
\end{proof}

\begin{corollary}[Stability of the zero set]\label{coro:stability_of_the_zero_set}
Let $u_1$ and $u_2$ be minimizers of $J = J(\cC_1,\l_+,\l_-)$ with $\l_+$ and $\l_-$ different from zero. For any $\e\in(0,1/2)$ there exists some $\d > 0$ such that $|u_1-u_2| < \d$ implies $\{u_1=0\}\cap\cC_1$ and $\{u_2=0\}\cap\cC_1$ are $\e$-close in the Hausdorff distance.
\end{corollary}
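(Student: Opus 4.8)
The plan is to prove Corollary \ref{coro:stability_of_the_zero_set} by combining the uniform non-degeneracy at the vertex (the preceding Lemma), the non-degeneracy away from the vertex inherited from the classical theory in \cite{ACF84}, and the uniform modulus of continuity furnished by Theorem \ref{thm:initial_reg}. The statement that two zero sets are $\e$-close in the Hausdorff metric unpacks into two symmetric inclusions: every point of $\{u_1=0\}\cap\cC_1$ lies within $\e$ of $\{u_2=0\}\cap\cC_1$, and vice versa. Since the roles of $u_1$ and $u_2$ are interchangeable, I would prove only one inclusion and invoke symmetry for the other.

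First I would argue by contradiction at the level of the whole statement, which is the cleanest route given that the quantifiers already suggest a compactness argument. Suppose the conclusion fails for some $\e_0\in(0,1/2)$. Then there are sequences of minimizers $u_1^{(k)}$ and $u_2^{(k)}$ with $\|u_1^{(k)}-u_2^{(k)}\|_\8 < 1/k$ but whose zero sets are not $\e_0$-close. After subtracting, we may extract a point $x_k\in\{u_1^{(k)}=0\}\cap\cC_1$ (say) whose distance to $\{u_2^{(k)}=0\}$ exceeds $\e_0$. The key quantitative input is non-degeneracy: on the ball $B_{\e_0}(x_k)$ the function $u_2^{(k)}$ does not vanish, so it has a strict sign there, and the non-degeneracy estimate forces $\sup_{B_{\e_0/2}(x_k)}|u_2^{(k)}|\geq c\,\e_0$ for a constant $c$ depending only on $\l_\pm$. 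Here one must treat two cases: if $x_k$ stays a definite distance from the vertex, the classical linear non-degeneracy of \cite{ACF84} applies on the flat chart; if $x_k\to 0$, the non-degeneracy at the vertex from the previous Lemma supplies the analogous lower bound. Meanwhile $u_1^{(k)}(x_k)=0$, so by the uniform Hölder continuity of Theorem \ref{thm:initial_reg} we also have $|u_1^{(k)}|\leq C(\e_0/2)^{\a/4}$ on $B_{\e_0/2}(x_k)$; taking $k\to\8$ and using $\|u_1^{(k)}-u_2^{(k)}\|_\8\to0$ transfers the smallness to $u_2^{(k)}$, contradicting the non-degeneracy lower bound once $\e_0$ is fixed and the constant $\d$ is taken smaller than $c\,\e_0/2$.

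To turn this contradiction argument into the stated quantitative form, I would instead run it directly: fix $\e$, and use non-degeneracy to choose $\d$ so small that $|u_1-u_2|<\d$ rules out the existence of a point of one zero set at distance more than $\e$ from the other. Concretely, if $x\in\{u_1=0\}$ has $\dist(x,\{u_2=0\})>\e$, then $u_2$ has a fixed sign on $B_\e(x)$ and non-degeneracy gives $\sup_{B_{\e/2}(x)}|u_2|\geq c\,\e$, while $u_1(x)=0$ together with the modulus of continuity gives $\sup_{B_{\e/2}(x)}|u_1|\leq C\e^{\a/4}$—but this comparison is too crude for small $\e$, so I would instead extract the contradiction purely from $|u_1-u_2|<\d$: at the point $y$ where $|u_2(y)|\geq c\e$ we get $|u_1(y)|\geq c\e-\d$, yet $u_1$ has the same sign as $u_2$ forces $y$ to sit inside the corresponding phase of $u_1$, and then non-degeneracy for $u_1$ prevents $x$ from being a zero at such small distance. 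Choosing $\d<c\e$ closes the loop.

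The main obstacle I anticipate is the non-degeneracy near the vertex, since the classical estimates are only stated away from $0$ and degenerate as one approaches it; this is exactly why the preceding Lemma on non-degeneracy at the vertex is needed, and the delicate point is ensuring the lower bound constant is uniform across both the vertex regime and the flat regime so that a single $\d$ works for all configurations. A secondary subtlety is bookkeeping the two phases: non-degeneracy must be applied to the correct one-sided phase ($\W^+$ or $\W^-$), and one should verify that the sign of $u_2$ is controlled on the whole ball rather than just at a point, which is where the connectedness of the complement of the zero set on $B_\e(x)$ and the continuity from Theorem \ref{thm:initial_reg} enter.
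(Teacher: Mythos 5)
Your overall strategy is the same as the paper's (prove one Hausdorff inclusion and symmetrize; use classical non-degeneracy in flat balls away from the vertex, and the preceding vertex non-degeneracy Lemma plus Harnack near the vertex), but the final step of your ``direct'' argument has a genuine gap. From $\sup_{B_{\e/2}(x)}|u_2|\geq c\e$, attained at some $y$, and $|u_1-u_2|<\d$ you get $|u_1(y)|\geq c\e-\d$, and you then claim that ``non-degeneracy for $u_1$ prevents $x$ from being a zero at such small distance.'' This is false: minimizers routinely vanish at distance $r$ from points where they are of size comparable to $r$ --- the one-phase profile $x_2^+$ does exactly this near every free boundary point, and it is the linear growth rate dictated by the very non-degeneracy you are invoking. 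So knowing $|u_1(y)|\geq c\e-\d$ at one point $y\in B_{\e/2}(x)$ yields no contradiction with $u_1(x)=0$. The repair is to transport the lower bound to the point $x$ itself: since $u_2$ is harmonic and of one sign on all of $B_\e(x)$ (this is where $\dist(x,\{u_2=0\})>\e$ is used), Harnack's inequality gives $|u_2(x)|\geq c'\e$ in the flat regime $|x|\geq \e/2$, and near the vertex ($|x|<\e/2$, so $\cC_{\e/2}\ss B_\e(x)$) the vertex Lemma forces $\sup_{\cC_{\e/4}}|u_2|\geq\d_0(\e)$, which Harnack again brings down to $|u_2(x)|\geq c\,\d_0(\e)$. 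Then $\d<\min(c'\e,\,c\,\d_0(\e))$ contradicts $u_1(x)=0$ directly, and this also settles the uniformity across the two regimes that you flagged as the main obstacle but did not resolve.

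For comparison, the paper takes a slightly different route to that uniformity: it fixes $u_1$, covers the compact set $\{u_1=0\}\cap\bar\cC_1$ by the cap $\cC_{\e/2}$ together with flat balls $B_{r(x)}(x)$, $r(x)=\min(|x|,\e/2)$, in which the classical theory supplies a $\d(x)$ (degenerating as $x\to0$), handles the cap via the vertex Lemma and Harnack exactly as above, and then extracts a finite subcover to take $\d$ as a finite minimum. Your per-point argument, once corrected with Harnack as indicated, actually dispenses with the covering altogether and is in that respect cleaner; your first, sequential-contradiction sketch is also salvageable but is weaker than the quantitative statement and would still need the same Harnack step to produce the contradiction, so the missing ingredient is the same in all three versions.
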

 
\begin{proof}
We have to show that $\{u_1=0\}\cap\cC_1 \ss (\{u_2=0\}\cap\cC_1) \oplus B_\e$ and by interchanging the roles of $u_1$ and $u_2$ we would have concluded the corollary. If the vertex doesn't belong to $\{u_1=0\}\cap\cC_1$ then the result follows from the classical theory by isolating the vertex. So we will assume in this proof that $u_1(0)=0$. The idea is to use the compactness of $\{u_1=0\}\cap\bar\cC_1$ to put togheter the results away from the origin and at the origin.

For $x \in \{u_1=0\}\cap(\cC_1\sm\{0\})$ we can use the classical theory in a ball $B_{r(x)}(x)$ with $r(x)=\min(|x|,\e/2)$ to conclude that there is some $\d(x)>0$ such that if $|u_1-u_2|<\d(x)$ in $B_{r(x)}(x)$, then $\{u_2=0\} \cap B_{r(x)}(x) \neq \emptyset$. Notice however that $\d(x)$ degenerates as $x\to0$.

We use the previous Lemma in the vertex in following form. Assume with out lost of generality that $u_2(0)\in(0,\d_0)$ and lets see that $\{u_2=0\}\cap\cC_{\e/2}\neq\emptyset$ if we chose $\d_0$ sufficiently small. Assume by contradiction that $u_2$ is a harmonic positive function in $\cC_{\e/2}$. By Harnack's inequality $u_2 \in (0,C\d_0)$ in $\cC_{\e/4}$ and by having that $\d_0$ is small enough we obtain a contradiction with the previous Lemma.

Consider the covering of $\{u_1=0\}\cap\bar\cC_1$ given by $\{B_{r(x)}(x)\} \cup \cC_{\e/2}$ for $x$ ranging over $\{u_1=0\}\cap(\bar\cC_1\sm\{0\})$. Extract then a finite collection $x_1,\ldots,x_N$ such that $\cC_{\e/2},B_{r(x_1)}(x_1),\ldots,B_{r(x_N)}(x_N)$ still covers $\{u_1=0\}\cap\bar\cC_1$ and chose $\d$ to be the smallest number among $\d_0,\d(x_1),\ldots,\d(x_N)$. From the previous considerations we have that $\{u_2=0\}\cap\cC_1$ hits each one of the sets  $\cC_e,B_{r(x_1)}(x_1),\ldots,B_{r(x_N)}(x_N)$ which implies that for every $x \in \{u_1=0\}\cap\bar\cC_1$ there is some $y \in \{u_2=0\}\cap\cC_1$ such that $\dist(x,y)<\e$. This is equivalent to say that $\{u_1=0\}\cap\cC_1 \ss \{u_2=0\}\cap\cC_1 \oplus B_\e$ which concludes the proof.
\end{proof}

Here is the proof of Theorem \ref{thm:stability}

\begin{proof}[Proof of Theorem \ref{thm:stability}]
By the regularity already proved in Theorem \ref{thm:initial_reg} we have that the sequence is uniformly in $C^{\a/3}(\cC_{1/2})$. By Arzela-Ascoli we have that the sequence has an accumulation point $v \in C^{\a/3}(\cC_{1/2})$ with respect to the $C^{\beta}$ normn for $\beta < \alpha /3$. By having that $u_k\to u$ in $L^2(\cC_{1/2})$ we obtain that $u$ is the only possible accumulation point in $L^2(\cC_{1/2})$ and therefore $v=u$ and the whole sequence converges uniformly to $u$ which proves the first part. The second part follows now from Corollary \ref{coro:stability_of_the_zero_set}.

To conclude that $u$ is a minimizer of $J$ we use as in the classical proof the lower semicontinuity of the Dirichlet term and then the uniform convergence of $\{u_k>0\}\cap\cC_{1/2}$ and $\{u_k<0\}\cap\cC_{1/2}$ to $\{u>0\}\cap\cC_{1/2}$ and $\{u<0\}\cap\cC_{1/2}$ respectively.
\end{proof}


\subsection{Optimal regularity} 

The following Lemma and its Corollary gives a gradient bound at the free boundary points. Recall that for a set $\Omega$ we have defined $\Omega^+ = \Omega \cap \{u>0\}$ and $\Omega^-$ is defined similarly. 

\begin{lemma}\label{lemma:bound_gradient}
Let $l \leq 2\pi$, $u$ be a minimizer of $J = J(\cC_1,\l_+,\l_-)$ with $\|Du\|_{L^2(\cC_1)} \leq 1$ and let $x_0 \in (\p\W^+\cap\p\W^-)\cap (\cC_{1/2}\sm\{0\})$; then $|Du(x_0)| \leq C$ for some universal constant $C>0$.
\end{lemma}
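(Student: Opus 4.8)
We must bound $|Du(x_0)|$ at a two-phase free boundary point $x_0$ sitting away from the vertex. Since $x_0 \neq 0$ and $l \le 2\pi$, the key structural fact is that locally around $x_0$ the cone is genuinely flat: there is a ball $B_{r_0}(x_0) \subset \cR^2$ with $r_0 = |x_0|$ on which the metric is the standard Euclidean one, and on which $u$ is an honest minimizer of the flat functional $\tilde J$. So in principle this is the classical Alt–Caffarelli–Friedman gradient bound, and the only real content is to make the scale-invariant constants uniform as $x_0 \to 0$.

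**The plan.** First I would rescale. Set $r_0 = |x_0|$ and define $u_{r_0}(y) = r_0^{-1} u(x_0 + r_0 y)$ on the unit ball $B_1 \subset \R^2$, which is a minimizer of the flat functional by the scale invariance already recorded (the relation $J(u,K) = \tilde J(\tilde u, \tilde K)$). Because $\|Du\|_{L^2(\cC_1)} \le 1$, the Morrey-type estimate $\int_{\cC_r}|Du|^2 \le C r^{2\alpha}$ from the initial regularity subsection controls the rescaled Dirichlet energy $\int_{B_1}|Du_{r_0}|^2$ by a universal constant, so all the competitor comparisons will have $x_0$-independent right-hand sides. Next I would run the ACF monotonicity formula (Lemma \ref{lemma:ACF}) for the pair $u^+, u^-$, or rather the flat version of it on $B_{r_0}(x_0)$: the quantity
\begin{align*}
\Phi(r) = \frac{1}{r^{4}} \int_{B_r(x_0)} |Du^+|^2 \int_{B_r(x_0)} |Du^-|^2
\end{align*}
is monotone nondecreasing in $r$ for $r \le r_0$ (this is the $\alpha = 2$, Euclidean case, whose proof is exactly the one reproduced in the appendix with eigenvalue $-(\pi/m)^2$ summing to the sharp exponent). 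Evaluating at $r = r_0$ and bounding $\Phi(r_0)$ by the controlled Dirichlet energy gives a uniform bound on the product of the two average energies at unit scale.

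**From the energy bound to the pointwise bound.** Once $\Phi$ is bounded at the unit scale, the standard two-phase argument converts a bound on $\limsup_{r\to 0}\Phi(r)$ into a gradient bound: I would combine the monotonicity with subharmonicity of $u^\pm$ (Corollary \ref{coro:subharmonic}) and a Caccioppoli/mean-value comparison to show that if $|Du^+(x_0)|$ were large, then both $\int_{B_r(x_0)}|Du^+|^2$ and $\int_{B_r(x_0)}|Du^-|^2$ would be forced to grow like $r^2$ with large constants, contradicting the uniform bound on $\Phi(r_0)$. Concretely, the blow-up at $x_0$ (extracted via Corollary \ref{coro:blowup}-type compactness, now centered at $x_0$ rather than the vertex, which is legitimate since we are in the flat region) is a homogeneous two-phase minimizer whose free boundary is a line and along which $|Du^+|$ and $|Du^-|$ satisfy the Euler–Lagrange balance $|Du^+|^2 - |Du^-|^2 = \lambda_+^2 - \lambda_-^2$; the finiteness of $\Phi$ pins down the slope, giving $|Du^\pm(x_0)| \le C$.

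**The main obstacle.** The delicate point is uniformity of the constant $C$ as $x_0 \to 0$. The naive worry is that the ACF formula and the blow-up compactness are only available on the flat ball $B_{r_0}(x_0)$ of shrinking radius $r_0 = |x_0|$, so one must check that every constant entering the argument is scale invariant and that the $\alpha$-dependent degeneration in the initial Hölder estimate does not leak in. The resolution is that after the rescaling $u \mapsto u_{r_0}$ the problem is posed on the fixed ball $B_1$ with uniformly bounded energy, and the flat ACF exponent is exactly $2$ (independent of $l$ once we are off the vertex), so the comparison is genuinely universal; the hypothesis $l \le 2\pi$ is used only to guarantee that $B_{r_0}(x_0)$ does indeed embed isometrically as a flat ball (for $l > 2\pi$ the ball around a point could wrap and see the vertex). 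I expect the bookkeeping to confirm that $C$ depends only on $\lambda_\pm$ and the universal energy bound, completing the proof.
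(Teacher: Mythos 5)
Your overall architecture matches the paper's proof at the endpoints: reading off $|Du^+(x_0)|^2|Du^-(x_0)|^2$ from the small-radius limit of the flat ACF quotient at $x_0$, and then converting the product bound into bounds on each factor via the free boundary condition $|Du^+(x_0)|^2-|Du^-(x_0)|^2=\l_+^2-\l_-^2\neq0$ are both exactly what the paper does. But there is a genuine gap at the step you yourself flag as ``the main obstacle.'' You claim the rescaled energy $\int_{B_1}|Du_{r_0}|^2 = r_0^{-2}\int_{B_{r_0}(x_0)}|Du|^2$ is universally bounded by the Morrey-type estimate from the initial regularity subsection. That estimate only gives $\int_{\cC_r}|Du|^2\le Cr^{2\a}$ for $\a<\min(1,2\pi/l)$, \emph{strictly} less than $1$, so it yields $\int_{B_1}|Du_{r_0}|^2\le Cr_0^{2\a-2}\to\8$ as $x_0\to0$; equivalently $\Phi(r_0)\le Cr_0^{4\a-4}$, which blows up. A uniform unit-scale energy bound after rescaling at $x_0$ is essentially equivalent to the Lipschitz estimate this lemma is meant to establish (Theorem \ref{thm:optimal_reg_accute} rests on it), so your proposed resolution is circular. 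The flat ACF on $B_{r_0}(x_0)$ alone cannot close this, since it only propagates bounds downward from $r=r_0$, where no uniform bound is available.

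The paper fills exactly this gap with the \emph{vertex-centered} ACF formula, Lemma \ref{lemma:ACF}, which you cite but then replace by its flat version at $x_0$. Since $B_{|x_0|}(x_0)\ss\cC_{2|x_0|}$, one writes
\begin{align*}
\frac{1}{|x_0|^4}\int_{B_{|x_0|}(x_0)}|Du^+|^2\int_{B_{|x_0|}(x_0)}|Du^-|^2
\le |x_0|^{2\a-4}\,\frac{1}{|x_0|^{2\a}}\int_{\cC_{2|x_0|}}|Du^+|^2\int_{\cC_{2|x_0|}}|Du^-|^2,
\end{align*}
and the cone ACF with exponent $\a=4\pi/l$ bounds the second factor by $C\|Du^+\|_{L^2(\cC_1)}^2\|Du^-\|_{L^2(\cC_1)}^2\le C$, while $l\le2\pi$ gives $2\a-4\ge0$, so $|x_0|^{2\a-4}\le1$. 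This is where the hypothesis $l\le2\pi$ actually enters --- the vertex-centered product decays at rate at least $r^4$, matching the flat normalization --- not, as you suggest, merely to guarantee that the ball around $x_0$ is flat; flatness is handled for every $l$ by lifting to $\cR^2$ via $\phi_l^{-1}$ (indeed for small $l<2\pi$ a metric ball of radius $|x_0|$ can wrap around the cone, and the paper works with the lift precisely for this reason). Incorporating the vertex-centered ACF step would turn your outline into essentially the paper's proof; the blow-up machinery you invoke at $x_0$ is then unnecessary, since away from the vertex the classical theory gives $Du(x_0)$ classically and the small-$r$ limit of the ACF quotient suffices.
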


\begin{proof}
We use that $u\circ\phi_l^{-1}:\cR^2\to\R$ minimizes $J$ over any compact set $\tilde K = \phi^{-1}_l(K)$ with $K \ss \cC_1$ compact in order to know that $u$ has enough regularity around $x_0$. The idea is that we apply the classical ACF monotonicity formula to $u^\pm$, centered at $x_0$ and, as the radius goes to zero, we measure the product of $|Du^\pm(x_0)|^2$.
\begin{align*}
2^{}|Du^+(x_0)|^2|Du^-(x_0)|^2 \leq 4\frac{1}{|x_0|^4}\int_{B_{|x_0|(x_0)}}|Du^+|^2\int_{B_{|x_0|(x_0)}}|Du^-|^2.
\end{align*}
Now we apply the ACF monotonicity formula given by Lemma \ref{lemma:ACF} with $\a = 4\pi/l\leq 2$. Notice that in order to apply such Lemma we are actually using Corollary \ref{coro:subharmonic}.
\begin{align*}
|Du^+(x_0)|^2|Du^-(x_0)|^2 &\leq 64|x_0|^{2\a-4}\frac{1}{|x_0|^{2\a}}\int_{\cC_{2|x_0|}}|Du^+|^2\int_{\cC_{2|x_0|}}|Du^-|^2,\\
&\leq C\|Du^+\|_{L^2(\cC_1)}^2\|Du^-\|_{L^2(\cC_1)}^2,\\
&\leq C.
\end{align*}
The minimizer $u$ also satisfies the Euler Lagrange equation at $x_0$ in the classical sense, $|Du^+(x_0)|^2 - |Du^-(x_0)|^2 = \l_+^2-\l_-^2 \neq 0$. Assume without lost of generality that $\l_+^2 - \l_-^2 = \L >0$. It implies
\begin{align*}
|Du^-(x_0)|^2 &\leq C\L^{-1},\\
|Du^+(x_0)|^2 &= |Du^-(x_0)|^2 + \L \leq C\L^{-1} + \L.
\end{align*}
\end{proof}

\begin{corollary}
Let $l \leq 2\pi$, $u$ be a minimizer of $J = J(\cC_1,\l_+,\l_-)$ with $\|Du\|_{L^2(\cC_1)} \leq 1$ and let $x_0 \in (\p\W^+\cup\p\W^-)\cap (\cC_{1/2}\sm\{0\})$; then $|Du(x_0)| \leq C$ for some universal constant $C>0$.
\end{corollary}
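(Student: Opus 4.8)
The plan is to decompose the free boundary point $x_0$ according to the disjoint partition
\[
(\p\W^+\cup\p\W^-) = (\p\W^+\cap\p\W^-)\ \cup\ (\p\W^+\sm\p\W^-)\ \cup\ (\p\W^-\sm\p\W^+),
\]
and to observe that the first, two-phase, stratum is exactly the content of Lemma \ref{lemma:bound_gradient}. Hence the only new work is at the one-phase strata. The structural fact that makes these tractable is the same one used throughout Section \ref{sec:Preliminaries}: since $x_0\neq0$, the cone is locally flat there, so after pulling back by $\phi_l^{-1}$ we may work on a genuine flat ball $B_r(x_0)\ss\cR^2$ with $r\leq|x_0|$, where the classical Alt-Caffarelli one-phase theory from \cite{ACF84} applies verbatim.

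First I would treat $x_0\in\p\W^+\sm\p\W^-$, the case $x_0\in\p\W^-\sm\p\W^+$ being symmetric after replacing $u$ by $-u$ and interchanging $\l_+$ and $\l_-$. The key elementary point is that the one-phase point has a full neighborhood free of the opposite phase: since $\p\W^-$ is closed and $x_0\notin\p\W^-$, there is a radius $\rho\in(0,|x_0|)$ with $B_\rho(x_0)\cap\p\W^-=\emptyset$; because $x_0\in\p\W^+$ there are points of $\{u>0\}$ arbitrarily close to $x_0$, so $\W^-$ cannot fill $B_\rho(x_0)$ and therefore $\W^-\cap B_\rho(x_0)=\emptyset$. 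Consequently $u=u^+\geq0$ on $B_\rho(x_0)$ and, after the pull-back, $u$ is a nonnegative one-phase minimizer of $\int|Du|^2+\l_+\chi_{\{u>0\}}$ on a flat ball.

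Then I would invoke the classical one-phase regularity. The scale-invariant Lipschitz estimate of \cite{ACF84}, applied on $B_\rho(x_0)$ with the local data controlled by $\|Du\|_{L^2(\cC_1)}\leq1$, gives $\|Du\|_{L^\8(B_{\rho/2}(x_0))}\leq C$ for a universal $C$, and in particular $|Du(x_0)|\leq C$ interpreted as the bound on the (a.e.-defined) gradient. Equivalently, one may simply read off the Euler-Lagrange condition $|Du^+(x_0)|=\l_+$ that the minimizer satisfies at such free boundary points, which is bounded since $\l_+$ is a fixed parameter; in $2$ dimensions, away from the vertex, the one-phase free boundary is smooth, so this holds classically. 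Combining this bound with Lemma \ref{lemma:bound_gradient} for the two-phase stratum covers the full set $\p\W^+\cup\p\W^-$ and closes the corollary.

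I do not anticipate a genuine obstacle here. The hard analytic input—the coupling of the two phases through the ACF monotonicity formula of Lemma \ref{lemma:ACF}—has already been absorbed into Lemma \ref{lemma:bound_gradient}, and the one-phase bound is classical once the problem is flattened away from the vertex. The only step demanding care is the elementary topological argument above, ensuring that a one-phase free boundary point really has a whole neighborhood free of the opposite phase, so that the correct tool is the one-phase theory rather than the two-phase one.
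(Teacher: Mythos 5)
Your proposal is correct and follows essentially the same route as the paper: both reduce to the one-phase strata $\p\W^+\,\triangle\,\p\W^-$ (the two-phase stratum being exactly Lemma \ref{lemma:bound_gradient}), observe that $u$ keeps one sign in a neighborhood of such a point away from the vertex, and then invoke the classical flat one-phase gradient bound from \cite{ACF84}, which is scale invariant and hence independent of the distance to the vertex. Your explicit topological argument that a one-phase point has a full neighborhood free of the opposite phase merely spells out a step the paper asserts without detail, and it is sound.
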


\begin{proof}
From the previous Lemma the only case left is when $x_0 \in (\p\W^+\D\p\W^-)\cap (\cC_{1/2}\sm\{0\})$. In such case $u$ keeps just one sign in a neighbirhood of $x_0$ (either non negative or non positive) and minimizes a one phase problem in the same neighborhood. From the gradient bound for the flat case we obtain the gradient bound, independent of the distance to the vertex.
\end{proof}

We split the proof of Theorem \ref{thm:optimal_reg_accute} into two Lemmas depending if $0\in\p\cC_1^\pm$ or not.

\begin{lemma}
Let $u$ and $x_0$ be as in Theorem \ref{thm:optimal_reg_accute} and assume additionally that $0\in\cC_1^+$. Then the same conclusions as in Theorem \ref{thm:optimal_reg_accute} hold.
\end{lemma}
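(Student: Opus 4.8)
The plan is to cash in on the extra hypothesis $0 \in \cC_1^+$, i.e. $u(0) > 0$. By the observation in Section \ref{sec:Preliminaries} that $u$ is harmonic wherever it does not vanish, together with the continuity of $u$ from Theorem \ref{thm:initial_reg}, the number $d_0 = \dist(0,\p\cC_1^+\cap\cC_{1/2})$ is strictly positive and $u$ is a positive harmonic function on $\cC_{d_0}$. The entire difficulty sits at the vertex: away from it the metric is flat and the classical estimates transfer verbatim, so the heart of the matter is a linear growth estimate at the tip,
\[
\sup_{\cC_{d_0/2}} u \leq C d_0, \qquad \sup_{\cC_{d_0/2}}|Du| \leq C.
\]

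To prove the first inequality I would choose a free boundary point $z_0$ with $|z_0| = d_0$, which exists by the definition of $d_0$. The ball $B_{d_0/4}(z_0)$ avoids the vertex (since $\dist(z_0,0)=d_0>d_0/4$) and is genuinely flat (its radius is smaller than $|z_0|$), so the classical two phase theory of \cite{ACF84} gives a universal Lipschitz bound $|Du|\leq C$ there; integrating from $z_0$, the radial point $p = (7d_0/8)\,z_0/|z_0|$ satisfies $u(p) = u(p)-u(z_0) \leq C|p-z_0| \leq C d_0$. The next step is to propagate this single value to all of $\cC_{d_0/2}$. Since $u$ is positive and harmonic on $\cC_{d_0}$, I would invoke the Harnack inequality for positive harmonic functions on the cone, valid with a constant depending only on $l$ because $\cC\sm\{0\}$ is flat and the vertex is controlled through the mean value property; this yields $\sup_{\cC_{7d_0/8}} u \leq C\,u(p) \leq C d_0$, and in particular $\sup_{\p\cC_{3d_0/4}} u \leq C d_0$.

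For the gradient bound up to the vertex I would feed this into the Fourier representation of Proposition \ref{prop:harmonic_functions}. Evaluating the expansion on the circle $\{r=3d_0/4\}$ controls the coefficients through $\sup_{\p\cC_{3d_0/4}}u\leq Cd_0$, and differentiating term by term produces a series whose $k$-th term, at radius $r \leq d_0/2$, is bounded by $C\,(2\pi k/l)\,(r/(3d_0/4))^{2\pi k/l}\,r^{-1}d_0$. This is exactly the place where $l\leq 2\pi$ is used: the leading exponent $2\pi/l$ is then $\geq 1$, which keeps $r^{-1}(r/\rho)^{2\pi k/l}$ from blowing up as $r\to0$ and makes the series converge to a bound $|Du|\leq C$ on $\cC_{d_0/2}$ (for $l>2\pi$ the exponent drops below $1$ and $|Du|$ would diverge at the tip). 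The remaining, routine part is the region $|x_0|\geq d_0/2$: there one works purely on flat balls $B_r(x_0)$ with $r\leq|x_0|$, applying the optimal regularity of \cite{ACF84} and the universal gradient bound at free boundary points (the Corollary to Lemma \ref{lemma:bound_gradient}); splitting according to whether $\dist(x_0,\p\cC_1^+)$ is smaller or larger than $|x_0|$ and using the near-vertex bound to anchor $u$ when the nearest free boundary point is close to the vertex gives $|u(x_0)|\leq C\dist(x_0,\p\cC_1^+\cap\cC_{1/2})$ and $|Du(x_0)|\leq C$. Points with $|x_0|\leq d_0/2$ are covered directly by the displayed estimate, since there $\dist(x_0,\p\cC_1^+)\geq d_0/2$ and so $\sup u \leq Cd_0$ already reads as the claimed linear bound.

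The main obstacle is precisely this transport across the vertex: the flat-ball comparison that underlies the classical theory has no analogue at the conical singularity, so one cannot localize there and argue by rescaling. The resolution is to replace comparison by positivity, which is available exactly because $0\in\cC_1^+$; this turns $u$ into a positive harmonic function near the vertex and lets the Harnack inequality together with the Fourier expansion (with $l\leq2\pi$) carry the estimate from the free boundary at distance $d_0$ all the way to the tip.
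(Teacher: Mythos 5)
Your proposal is correct and follows the same three-region skeleton as the paper (linear bound at the tip, gradient bound at the tip, then the annular region $\cC_{1/4}\sm\cC_{d/8}$ by flat theory), but the two inner steps run on different engines, so it is worth comparing. For the bound $u(0)\leq Cd$ the paper argues in the opposite direction around the free boundary point: Harnack gives the \emph{lower} bound $u\geq c\,u(0)$ on $\cC_{d/2}$, a logarithmic barrier $\varphi(x)=c\,u(0)(\ln d-\ln|x|)$ is slid below $u$ in the annulus $\cC_d\sm\cC_{d/2}$, and the gradients are compared at the touching point $x_1\in\p\cC_{1/2}^+$ where Lemma \ref{lemma:bound_gradient} bounds $|Du(x_1)|$; you instead integrate the universal flat Lipschitz bound outward from $z_0$ to the interior point $p$ and spread the resulting \emph{upper} bound across the vertex by Harnack. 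Both routes use the same two ingredients (cone Harnack near the tip, which both you and the paper take on faith, and the universal gradient bound at free boundary points coming from the ACF formula with the normalization $\|Du\|_{L^2(\cC_1)}\leq 1$ and $\l_+\neq\l_-$), just in opposite order; one small caveat shared with the paper is that $B_{d_0/4}(z_0)$ is genuinely flat only after lifting by $\phi_l^{-1}$ (for small $l$ such a ball wraps around the cone), which is harmless because the pull-back is again a minimizer on compact subsets of $\cR^2$. For the gradient bound at the tip the paper combines subharmonicity of $|Du|^2$ on flat balls, the vertex-centered Dirichlet monotonicity (Lemma \ref{lemma:dirichlet_monotonicity} with $\a=1$, the one place $l\leq 2\pi$ enters) and a Caccioppoli inequality, whereas you differentiate the Fourier expansion of Proposition \ref{prop:harmonic_functions} term by term after controlling the coefficients by $\sup_{\p\cC_{3d_0/4}}u\leq Cd_0$; since the monotonicity lemma is itself proved via this expansion, the mechanisms are essentially equivalent, though your version delivers the bound on all of $\cC_{d_0/2}$ at once and isolates cleanly why $2\pi/l\geq 1$ is needed. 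The weakest part of your write-up is the outer region: when $|x_0|\leq\dist(x_0,\p\cC_1^+)$ the largest ball $B_{|x_0|}(x_0)\subset\cC_{1/2}^+$ touches the vertex rather than the free boundary, and your ``anchoring'' really does require the barrier comparison the paper writes out, with the near-vertex bound $u\leq Cd$ controlling the slope of $\varphi(x)=c\,u(x_0)(\ln R-\ln|x-x_0|)$ at $0$, followed by interior estimates for $|Du(x_0)|$; you name exactly this mechanism, so this is a gap of detail rather than of mathematics.
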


\begin{proof}
Let $d = \dist(0,\p\cC_1^+) \in (0,1/2]$. The ball $\cC_d$ touches $\p\cC_{1/2}^+$ at some point $x_1$ where we know that $|Du^+(x_1)| \leq C$ from Lemma \ref{lemma:bound_gradient}. By using Harnack's inequality we get that $u(x) \geq Cu(0)$ in $B_{d/2}$ and then the following barrier can be put below $u$ in $\cC_d\sm\cC_{d/2}$ for $c$ sufficiently small,
\begin{align*}
\varphi(x) = cu(0)(\ln|x_0| - \ln|x|).
\end{align*}
This implies $C \geq |D\varphi(x_1)| = u(0)/d$. Which is the desired estimate at the origin.

For $x_0 \in \cC_{d/2}$ we use Harnack's inequality to get that $u(x_0) \leq Cd \leq C\dist(x_0,\p(\W^+\cap\cC_{1/2}))$. For $x_0$, now in $\cC_{d/8}\sm\{0\}$, we use the monotonicity of the Dirichlet energy,
\begin{align*}
|Du(x_0)|^2 &\leq \frac{1}{|x_0|^2}\int_{B_{|x_0|}(x_0)}|Du|^2,\\
&\leq 4\frac{1}{(2|x_0|)^2}\int_{\cC_{2|x_0|}}|Du|^2,\\
&\leq C\frac{1}{d^2}\|Du\|_{L^2(\cC_{d/4})}^2,\\
&\leq C\frac{1}{d^4}\|u\|_{L^2(\cC_{d/2})}^2,\\
&\leq C
\end{align*}
Which are the desired estimates at $\cC_{d/8}$.

Finally we consider $x_0 \in \cC_{1/4} \sm \cC_{d/8}$. Let $B_R(x_0)$ be the largest ball contained in $\cC_{1/2}^+\sm\{0\}$. If $B_R(x_0) \cap \p\cC_{1/2}^+ \ni x_1$ then the estimates for $x_0$ follow by using Lemma \ref{lemma:bound_gradient} at $x_1$ and considering a lower barrier as before. If $R = |x_0|$ 
we also use a similar barrier and instead of Lemma \ref{lemma:bound_gradient} we use the estimates just proved at $\cC_{d/2}$. Let,
\begin{align*}
\varphi(x) = cu(x_0)(\ln R - \ln|x|),
\end{align*}
with $c$ small enough such that by using Harnack's inequality we can get that $u \geq \varphi$ in $B_R(x_0)\sm B_{R/2}(x_0)$. Because $u^+ \leq Cd$ in $\cC_{d/2}$ we get that $C \geq |D\varphi(0)| \geq u(x_0)/R$. It implies that $u(x_0) \leq CR \leq C\dist(x_0,\p(\W^+\cap\cC_{1/2}))$. For the gradient estimate we can use interior estimates at $B_R(x_0)$, i.e. $|Du(x_0)| \leq C|u(x_0)|/R \leq C$.
\end{proof}

\begin{lemma}
Let $u$ and $x_0$ be as in Theorem \ref{thm:optimal_reg_accute} and assume additionally that $0\in\p\cC_1^+$. Then the same conclusions as in Theorem \ref{thm:optimal_reg_accute} hold.
\end{lemma}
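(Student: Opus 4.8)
The plan is to reduce both conclusions to a single estimate: linear growth of $u$ at the vertex, i.e. $\sup_{\cC_r}u^{\pm}\le Cr$ for all small $r$ with $C$ universal. Granting this, the stated bounds follow from the same dichotomy used in the previous Lemma. Fix $x_0\in\cC_{1/4}^+$ and set $d=\dist(x_0,\p\W^+\cap\cC_{1/2})$; since $0\in\p\W^+$ we have $u(0)=0$ and hence $d\le|x_0|$. Let $B_d(x_0)\ss\W^+\cap\cC_{1/2}$ be the largest inscribed ball. If $d<|x_0|$ then $\overline{B_d(x_0)}$ avoids the vertex, it is a genuine flat ball, and the flat argument from the previous Lemma applies: Lemma \ref{lemma:bound_gradient} at the touching free boundary point together with a logarithmic lower barrier and Harnack gives $u(x_0)\le Cd$ and $|Du(x_0)|\le C$. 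If instead $d=|x_0|$, the linear growth bound yields $u(x_0)\le\sup_{\cC_{3d/2}}u\le Cd$, and since $B_{d/2}(x_0)\ss\W^+$ avoids the vertex, an interior gradient estimate gives $|Du(x_0)|\le Cd^{-1}\sup_{B_{d/2}(x_0)}u\le C$. The case of $u^-$ is symmetric.

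It remains to prove the linear growth at the vertex, which is the heart of the matter. I would first note that the positive phase cannot fill a punctured neighborhood of $0$: otherwise $u=u^+$ would be nonnegative, harmonic in $\cC_\rho\sm\{0\}$ and continuous, hence harmonic across $0$ by removable singularity, with an interior minimum at $0$, forcing $u\equiv0$ nearby — a contradiction. Thus the free boundary genuinely accumulates at the vertex and the zero set is nontrivial in every $\cC_r$. To bound $m(r):=\frac{1}{lr}\int_{\p\cC_r}u^+$ I would adapt the classical Alt--Caffarelli Lipschitz estimate at a free boundary point, now centered at the vertex. Let $v$ be the harmonic replacement of $u$ in $\cC_r$; since $u^+$ is subharmonic (Corollary \ref{coro:subharmonic}) we have $v\ge u\ge0$, while the mean value property at the vertex gives $v(0)=m(r)$. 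Minimality against the competitor $v$ yields the one side $\int_{\cC_r}|D(v-u)|^2\le\l_+|\{u=0\}\cap\cC_r|$, and the reverse, non-degenerate inequality comes from the fact that $v-u$ equals $v$ on the nontrivial zero set while vanishing on $\p\cC_r$. Matching the two estimates, exactly as in the flat case, produces $m(r)\le Cr$, and then $\sup_{\cC_{r/2}}u^+\le\sup_{\cC_{r/2}}v\le Cm(r)\le Cr$.

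The single place where the hypothesis $l\le2\pi$ is essential — and the step I expect to be the main obstacle — is the passage from $v(0)=m(r)$ to the Harnack-type bound $\inf_{\cC_{r/2}}v\ge c\,m(r)$ for nonnegative harmonic functions up to the vertex. This is where the geometry of the cone is felt: writing $v$ in the Fourier representation \eqref{e:expan}, the nonconstant modes carry the exponents $2\pi k/l\ge1$ precisely when $l\le2\pi$, so near the vertex they are dominated by the constant mode $a_0=v(0)$ and cannot drive $v$ down to $0$; for $l>2\pi$ the leading exponent drops below $1$ and this control, hence the linear growth itself, genuinely fails. I would record this Harnack bound, together with the companion Green's-function estimate needed for the non-degenerate lower bound and the non-degeneracy at the vertex already established in the appendix, as the analytic input that makes the classical Lipschitz argument go through at the vertex. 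With it in hand the remaining steps are the routine energy comparisons of Alt--Caffarelli, and the two stated bounds follow from the linear growth as described in the first paragraph.
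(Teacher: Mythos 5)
Your proposal is correct in outline but takes a genuinely different route from the paper. The paper does not prove any growth estimate at the vertex: it argues by a covering and Harnack-chain argument at the scale $r=|x_0|$ --- if $B_{r/2}(x_0)$ meets $\{u^+=0\}$, the touching-ball argument of the preceding Lemma applies verbatim because the relevant ball $B_d(x_0)$ avoids the vertex; otherwise a finite chain of balls along the circle $\p\cC_r$ transports the bound from a ball meeting $\{u^+=0\}$ to $x_0$ via Harnack's inequality, and since in that case $\dist(x_0,\p\W^+)\geq r/2$, the resulting bound $u(x_0)\leq Cr$ is the desired one. You instead prove the linear growth $\sup_{\cC_r}u^\pm\leq Cr$ at the vertex by adapting the Alt--Caffarelli growth lemma (harmonic replacement, mean value property at the vertex, Harnack, capacity estimate) and then conclude through the dichotomy $d<|x_0|$ versus $d=|x_0|$. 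Your route is more self-contained and quantitative at the vertex; in particular, your removable-singularity/minimum-principle observation that the zero set must accumulate at $0$ supplies exactly the justification that the paper's own sketch leaves implicit when it asserts that one of the covering balls on $\p\cC_r$ intersects $\{u^+=0\}$.

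Two concrete repairs are needed. First, the chain $v\geq u\geq 0$ in your growth lemma silently assumes one phase, but the Lemma is two-phase, and by your own accumulation argument the negative phase may well reach the vertex. You must take $v$ to be the harmonic replacement of $u^+$ (then $v\geq u^+$ by Corollary \ref{coro:subharmonic} and $v(0)=m(r)$ by the mean value property), and since this $v$ no longer agrees with $u$ on $\p\cC_r$ where $u<0$, it is not an admissible competitor: the one-phase inequality $\int_{\cC_r}|D(v-u)|^2\leq\l_+|\{u=0\}\cap\cC_r|$ you wrote is false as stated, and the energy comparison requires the two-phase bookkeeping of \cite{ACF84}. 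This is standard but not cosmetic. Second, your claim that the vertex Harnack bound is where $l\leq2\pi$ is felt --- and that it ``genuinely fails'' for $l>2\pi$ --- is wrong. For $v\geq0$ harmonic in $\cC_r$, the mean of $v$ over $\p\cC_\rho$ equals the constant mode $a_0=v(0)$, whence $(|a_k|+|b_k|)\rho^{2\pi k/l}\leq Cv(0)$ for every $\rho<r$; the nonconstant modes are then dominated on $\cC_{\d r}$ once $\d=\d(l)$ is small, using only positivity of the exponents $2\pi k/l$, not that they are $\geq1$. Equivalently, Harnack chains around the circles $\p\cC_\rho$ together with the mean value property give $c\,v(0)\leq v\leq C(l)\,v(0)$ on $\cC_{r/2}$ for every finite $l$. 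So your growth estimate is essentially $l$-independent --- consistent with the paper's own remark that one-phase minimizers remain Lipschitz at the vertex for every $l$ --- and the hypothesis $l\leq2\pi$ actually enters your argument elsewhere: through Lemma \ref{lemma:bound_gradient}, that is, the ACF formula of Lemma \ref{lemma:ACF} with exponent $4\pi/l\leq2$, invoked at two-phase touching points in your $d<|x_0|$ branch. With the first point repaired and the second claim deleted, your argument stands as a valid alternative proof.
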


\begin{proof}
The idea is to use a covering argument to pass the estimates from points that are close to $\p\cC_1^+$ to every other point in the positivity set.

Let $x_0 \in \cC_{1/4}^+\sm\{0\}$ and assume that for $r = |x_0|/2$, $B_r(x_0)\cap\{u^+=0\} \neq \emptyset$. Then the estimate follows as before by using Lemma \ref{lemma:bound_gradient} because for $d = \dist(x_0,\p\cC_{1/2}^+)$ the ball $B_d(x_0)$ doesn't contain the vertex.

For general $x_0 \in \cC_{1/4}^+\sm\{0\}$ we consider a finite covering of $\p\cC_r$ with balls center at $\cC_r$ and radius $r/2$ where $r=|x_0|$. Because $\{u = 0\}\cap\cC_{1/2} \neq \emptyset$ there is one of these balls that intersects $\{u^+ = 0\}$ and then the estimates are valid there. To obtain the estimates at $x_0$ we just need to apply Harnack's inequality in a finite chain of balls up to one that reaches $x_0$.

To conclude let us notice that the gradient is not necessarily well defined at the vertex because for $l \neq 2\pi$ the tangent space at $0$ is not well defined. Still the previous gradient estimate holds uniformly up to the vertex. 
\end{proof}

These two previous Lemmas conclude the proof of Theorem \ref{thm:optimal_reg_accute}. 


\subsubsection{Blows-up}

The first part in Corollary \ref{coro:blowup} follows from the previous stability and optimal regularity.

\begin{proof}[Proof of the first part in Corollary \ref{coro:blowup}]
Let $K\ss\cC$ be a compact set. By the scaling of the functional we have that $\|u_k\|_{H^1(2K)}$ is uniformly bounded starting at some $k_0$ sufficiently large. There exists then an accumulation point $u\in H^1(2K)$ which is also a minimizer in $K$ by Theorem \ref{thm:stability}. Moreover the whole sequence converges uniformly to $u$ in $K$ by the same Theorem. By the definition of the rescaling we have that the same sequence is uniformly bounded in $C^{0,1}(K)$ and therefore there is an acculumation point in $C^{0,1}(K)$. Because the sequence already converged to $u$ uniformly we conclude that $u \in C^{0,1}(K)$ and the convergence happened also in $C^{0,\beta}(K)$ for $\beta < 1$.
\end{proof}

For the second part in Corollary \ref{coro:blowup} we need to use a monotonicity formula as in \cite{MR1759450}. There is also a similar monotonicity formula in \cite{MR1620644}. The proofs of such monotonicity formulas use radial variations which naturally adapt to our situation with a cone.  We reproduce the proof in \cite{MR1759450} here. Notice also that the proof works no matter the length $l$ of the cone, however for $l\geq2\pi$ with $0 \in \partial\cC^+ \cap \partial \cC^-$, since the optimal reguarity is unknown it might be possible for $W(\cC_r, u)= -\infty$. 

\begin{theorem} \label{tim:weiss} Let $u$ be a minimizer of $J(\cC_1,\l_+,\l_-)$ such that $u(0)=0$ and define the \emph{Weiss energy} for $r\in(0,1)$,
\begin{align*}
W(\cC_r,u) &= \frac{1}{r^2}J(\cC_r,u) - \frac{1}{r}\int_0^r\int_{\p\cC_1}\1Du(tx)\cdot x\2^2 dt,\\
&= \frac{1}{r^{2}} \left( \int_{\C_r}{|Du|^2 + \l_+ \chi_{\{u>0\}} + \l_- \chi_{\{u<0\} } } \right),\\
&{} -\frac{1}{r}\int_0^r\int_{\p\cC_1}\1Du(tx)\cdot x\2^2 dt
\end{align*}  
Then $W(\cC_r,u)$ is monotone increasing in $r$. Furthermore, if $0<r_1 < r_2<1$, then $W(\cC_{r_1}, u) = W(\cC_{r_2},u)$ if and only if $u$ is homogeneous of degree $1$ with respect to $0$ on the ring $\cC_{r_2}\sm\cC_{r_1}$.
\end{theorem}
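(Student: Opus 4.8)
The plan is to follow the classical Weiss monotonicity argument, computing $\frac{d}{dr}W(\cC_r,u)$ directly and showing it is nonnegative, with equality forcing $1$-homogeneity. Since the cone is flat away from the vertex and the Weiss energy is built from the scale-invariant combination $r^{-2}J(\cC_r,u)$ minus the radial-derivative correction, the same computation as in \cite{MR1759450} goes through verbatim once we observe that all integrations by parts used are valid on the cone. This last point is exactly what Corollary \ref{coro:subharmonic} and the integration-by-parts/Green's formula discussion from Section \ref{sec:Preliminaries} provide: even with the vertex inside $\cC_r$, the minimizer satisfies $\D u = 0$ on $\{u\neq 0\}$ in the distributional sense and Green's formula holds. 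The hypothesis $u(0)=0$ guarantees that the rescalings $u_r(x)=r^{-1}u(rx)$ are well defined and that the homogeneity comparison is meaningful.

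First I would introduce the rescaled function $u_r(x) = r^{-1}u(rx)$, defined on $\cC_1$, and rewrite
\[
W(\cC_r,u) = \int_{\cC_1}\1|Du_r|^2 + \l_+\chi_{\{u_r>0\}} + \l_-\chi_{\{u_r<0\}}\2 - \int_{\p\cC_1}u_r^2\, d(\theta),
\]
using scale invariance of the volume terms and recognizing that the radial correction term equals the boundary integral $\int_{\p\cC_1}u_r^2$ after differentiating the defining integral in $t$. Differentiating $W$ in $r$ then produces two groups of terms: the derivative of the Dirichlet-plus-volume part, which by the domain-variation (inner-variation) identity for minimizers equals a boundary expression involving $\p_r u$ and the Rellich–Pohozaev identity, and the derivative of the boundary term. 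The standard algebra collapses these into
\[
\frac{d}{dr}W(\cC_r,u) = \frac{2}{r}\int_{\p\cC_r}\1\p_r u - \frac{u}{r}\2^2 d\sigma \geq 0,
\]
where $\p_r u = Du\cdot x/|x|$ is the radial derivative, which exists a.e.\ on $\p\cC_r$ since $u$ is Lipschitz away from the vertex.

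The main obstacle I anticipate is justifying the domain-variation identity (the Rellich–Pohozaev step) rigorously when the vertex lies inside $\cC_r$ and $u(0)=0$ with possibly two phases touching at $0$. Away from the vertex the classical identity for minimizers of the Alt–Caffarelli functional applies unchanged because the metric is flat; near the vertex one must check that the contributions from an $\e$-sphere $\p\cC_\e$ vanish as $\e\to0$. This is where the regularity from Theorem \ref{thm:initial_reg} is used: since $u\in C^{\a/4}$ up to the vertex with $u(0)=0$, the boundary terms on $\p\cC_\e$ are controlled by $\e\cdot(\text{energy density})$ and tend to zero, so no spurious vertex contribution appears. Once the differential inequality is established, the equality characterization is immediate: $\frac{d}{dr}W \equiv 0$ on $(r_1,r_2)$ forces $\p_r u = u/r$ a.e., which is precisely the statement that $u$ is positively $1$-homogeneous about the origin on the ring $\cC_{r_2}\sm\cC_{r_1}$. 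This last equivalence I would state and verify by integrating the Euler-type ODE $\p_r u = u/r$ along rays, noting the rays on the cone are honest geodesics emanating from the vertex.
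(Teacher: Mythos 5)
There is a genuine gap, and it appears at your very first step. Your rewriting of the Weiss energy is not an identity: the correction term in the stated $W$ is $\frac{1}{r}\int_0^r\int_{\p\cC_1}\left(Du(tx)\cdot x\right)^2dt$, the $t$-average of the \emph{squared} radial derivative, which is not equal to the boundary term $\int_{\p\cC_1}u_r^2$. Indeed, since $u(0)=0$ gives $u_r(x)=\frac1r\int_0^r Du(tx)\cdot x\,dt$ for $x\in\p\cC_1$, Cauchy--Schwarz shows the paper's correction term \emph{dominates} $\int_{\p\cC_1}u_r^2$, with equality precisely when $Du(tx)\cdot x$ is independent of $t$, i.e.\ when $u$ is $1$-homogeneous. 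The two functionals therefore differ exactly by the Jensen gap that the paper's proof exploits, so your computation would at best prove monotonicity of the classical Weiss functional, which is neither the stated theorem nor known to imply it (the difference of the two quantities is not obviously monotone). The paper's actual argument is an outer comparison, with no differentiation of a boundary term and no inner variation: it tests minimality of $u$ on $\cC_t$ against the $1$-homogeneous extension $u_t(x)=\frac{|x|}{t}u(t\vec{\theta})$ of its trace, computes $J(u_t)$ explicitly as a boundary integral over $\p\cC_t$, recognizes those terms inside $\frac{t^3}{2}(t^{-2}J(\cC_t))'$, and then uses precisely the Cauchy--Schwarz inequality above to absorb the leftover term $\int_{\p\cC_t}u^2/t^2$ into the derivative of the averaged correction term; the rigidity statement comes from the equality case of Cauchy--Schwarz ($Du(s\vec{\theta})\cdot\vec{\theta}$ constant in $s$ along a.e.\ ray), not from an exact derivative formula like your $\frac{d}{dr}W=\frac2r\int_{\p\cC_r}(\p_r u-u/r)^2$.

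Your Rellich--Pohozaev step also fails as justified, independently of the first issue. The $\varepsilon$-sphere contribution in a Pohozaev identity scales like $\varepsilon\int_{\p\cC_\varepsilon}|Du|^2d\sigma$, and the only gradient bound available near the vertex in general is Proposition \ref{prop:Lipschitz_degenerate}, namely $|Du|\le C\varepsilon^{-1}$, which makes this contribution $O(1)$ rather than $o(1)$; the $C^{\alpha/4}$ modulus of Theorem \ref{thm:initial_reg} controls $u$ itself but gives no control whatsoever on $|Du|^2$ at scale $\varepsilon$. For $l\le 2\pi$ you could patch this with the Lipschitz bound of Theorem \ref{thm:optimal_reg_accute}, but the theorem is stated (and, as the paper remarks, its proof works) for arbitrary $l$, where no Lipschitz bound is known when two phases meet at the vertex and harmonic functions are merely $C^{2\pi/l}$. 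The paper's comparison route sidesteps the vertex entirely: minimality against the competitor $u_t$ requires no integration by parts at $0$, and the representation $u(t\vec{\theta})/t=\lim_{\varepsilon\to0^+}\frac1t\int_\varepsilon^t Du(s\vec{\theta})\cdot\vec{\theta}\,ds$ uses only continuity at the vertex together with $u(0)=0$.
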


\begin{remark}\label{rmk:rescaling_of_W}
For $u_r = r^{-1}u(r\cdot)$, the functional $W$ enjoys the following rescaling property:
\begin{align*}
W(\cC_R,u_r)= W(\cC_{rR},u_r)
\end{align*}
\end{remark}

\begin{proof}
An admissible competitor against $u$ in $\cC_t$ is given by the following 1-homogeneous function constructed from the trace of $u$ in $\p\cC_t$. For $x\neq 0$ we denote $\vec{\theta} = x/|x|$,
\begin{align*}
u_t(x) = \frac{|x|}{t}u\1t\vec{\theta}\2.
\end{align*}
Moreover by Proposition \ref{prop:Lipschitz_degenerate} we have that $u$ is Lipschitz in $\bar\cC_t$ so that the following computation is well justified.
\begin{align*}
&\int_{\cC_r} |Du_t|^2 + \l_+\chi_{\{u_t>0\}} + \l_-\chi_{\{u_t,0\}},\\
&= \frac{t}{2}\int_{\p\cC_t}|Du|^2-\1Du\cdot \vec{\theta}\2^2 + \frac{u^2}{t^2} + \l_+\chi_{\{u>0\}} +  \l_-\chi_{\{u<0\}}
\end{align*}

Notice, on the other hand, that the derivative of $t^{-2}J(\cC_t)$ with respect to $t$ throws out some similar terms to the ones we already have above,
\begin{align*}
\frac{t^3}{2}(t^{-2}J(\cC_t))' = -J(\cC_t) + \frac{t}{2}\int_{\p\cC_t}|Du|^2 + \l_+\chi_{\{u>0\}} +  \l_-\chi_{\{u<0\}}.
\end{align*}
Then by using that $u$ is a minimizer in $\cC_t$ we obtain,
\begin{align*}
0 \leq J(u_t) - J(u) &= -J(\cC_t) + \frac{t}{2}\int_{\p\cC_t}|Du|^2+ \l_+\chi_{\{u>0\}} + \l_-\chi_{\{u<0\}},\\
&{} - \frac{t}{2}\int_{\p\cC_t}\1Du\cdot \vec{\theta}\2^2 - \frac{u^2}{t^2},\\
&= \frac{t^3}{2}(t^{-2}J(\cC_t))' - \frac{t}{2}\int_{\p\cC_t}\1Du\cdot \vec{\theta}\2^2 - \frac{u^2}{t^2}
\end{align*}

By writing,
\begin{align*}
\frac{u(t\vec{\theta})}{t} = \lim_{\e\to0^+}\frac{1}{t}\int_\e^t Du\1s\vec{\theta}\2\cdot \vec{\theta} ds \text{ for $t\vec{\theta} \in\p\cC_t$},
\end{align*}
we obtain by H\"older's inequality that
\begin{align*}
0 &\leq \lim_{\e\to0^+}\frac{1}{t}\int_{\p\cC_1}\frac{1}{t}\int_\e^t \1Du(s\vec{\theta})\cdot\vec{\theta}\2^2 ds - \1\frac{1}{t}\int_\e^t Du\1s\vec{\theta}\2\cdot \vec{\theta}\2^2,\\
&\leq (t^{-2}J(\cC_t))' + \lim_{\e\to0^+}\int_{\p\cC_1}\3\frac{1}{t^2}\int_\e^t \1Du(s\vec{\theta})\cdot\vec{\theta}\2^2 ds - \frac{1}{t}\1Du(t\vec{\theta})\cdot\vec{\theta}\2^2\4,\\
&= \1t^{-2}J(\cC_t) - \lim_{\e\to0^+}\frac{1}{t}\int_\e^t \int_{\p\cC_1}\1Du(s\vec{\theta})\cdot\vec{\theta}\2^2 ds\2'
\end{align*}
This implies the monotonicity.

In case of having $W(\cC_{r_1}) = W(\cC_{r_2})$ the motononicity forces the equality also in the whole interval $[r_1,r_2]$. The use of H\"older's implies that for almost every $s\in[r_1,r_2]$, $Du(s\vec{\theta})\cdot\vec{\theta}$ is independent of $s$ which is equivalent to the radial derivative of $u$ being 0-homogeneous and $u$ being 1-homogeneous. 
\end{proof}

\begin{proof}[Proof of the second part in Corollary \ref{coro:blowup}]
All we have to check is that for any $r_1 < r_2$, $W(\cC_{r_1},u_0) \geq W(\cC_{r_2},u_0)$. From the rescaling property of the functional, Remark \ref{rmk:rescaling_of_W}, we obtain that for $i=1,2$ we have that $W(\cC_{\r_kr_i},u) = W(\cC_{r_i},u_{\r_k}) \to W(\cC_{r_i},u_0)$. For each $k$ there exists some $m_k \geq k$ such that $\r_{m_k}r_2 < \r_kr_1$ which implies that $W(\cC_{\r_{m_k}r_2},u) \leq W(\cC_{\r_kr_1},u)$. By taking $k\to\8$ we conclude the desired inequality.
\end{proof}

\bibliographystyle{plain}
\bibliography{./mybibliography}

\end{document}